\documentclass{amsart}
\usepackage{fullpage,graphicx,amsfonts,amssymb,amsmath,amsthm}
\usepackage{xcolor}
\usepackage{bbm}
\usepackage[all]{xy}
\usepackage[inline]{enumitem}
\usepackage{hyperref}
\usepackage{arydshln}
\usepackage{mathrsfs}
\usepackage{MnSymbol}

\newtheorem{thm}{Theorem}
\newtheorem{theorem}[thm]{Theorem}

\newtheorem{proposition}[thm]{Proposition}
\newtheorem{lemma}[thm]{Lemma}
\newtheorem{corollary}[thm]{Corollary}

\theoremstyle{definition}

\newtheorem{definition}[thm]{Definition}
\newtheorem{remark}[thm]{Remark}

\newcommand{\A}{\mathbb{A}}
\newcommand{\C}{\mathbb{C}}

\newcommand{\Q}{\mathbb{Q}}
\newcommand{\R}{\mathbb{R}}

\newcommand{\Z}{\mathbb{Z}}

\newcommand{\cH}{\mathcal{H}}

\newcommand{\cL}{\mathcal{L}}

\newcommand{\cO}{\mathcal{O}}

\newcommand{\fm}{\mathfrak{m}}
\newcommand{\fp}{\mathfrak{p}}

\newcommand{\Hom}{\operatorname{Hom}}
\newcommand{\rk}{\operatorname{rk}}

\newcommand{\Sym}{\operatorname{Sym}}
\newcommand{\supp}{\operatorname{supp}}
\renewcommand{\ss}{\operatorname{ss}}
\newcommand{\Gal}{\operatorname{Gal}}
\newcommand{\cyc}{\operatorname{cyc}}

\newcommand{\St}{\operatorname{St}}

\newcommand{\tr}{\operatorname{tr}}
\newcommand{\Ind}{\operatorname{Ind}}
\newcommand{\GL}{\operatorname{GL}}
\renewcommand{\Re}{\operatorname{Re}}
\renewcommand{\Im}{\operatorname{Im}}
\newcommand{\AG}{\operatorname{AG}}

\newcommand{\sw}{{\sf{w}}}

\newenvironment{spmatrix}{\left(\begin{smallmatrix}}{\end{smallmatrix}\right)}

\allowdisplaybreaks

\title{Parahoric level $p$-adic $L$-functions for automorphic representations of $\GL_{2n}$ with Shalika models}
\author{Mladen Dimitrov \and Andrei Jorza}
\date{}

\begin{document}

\maketitle

\begin{abstract}
We construct $p$-adic $L$-functions for regularly refined cuspidal automorphic representations  of symplectic type on $\GL_{2n}$ over totally real fields,
which are parahoric spherical at every finite place. Furthermore, we prove etaleness of the parabolic eigenvariety at such points and
construct  $p$-adic $L$-functions in families. The novel local ingredients are the construction of improved Ash--Ginzburg Shalika functionals
and production of Friedberg--Jacquet test vectors relating local zeta integrals to automorphic $L$-functions beyond the spherical level. 
Our proofs rely on a generalization of Shahidi's theory of local coefficients to Shalika models, for which we establish a general factorization formula related to the exterior square automorphic $L$-function. 
\end{abstract}

\addtocontents{toc}{\setcounter{tocdepth}{0}}

\section*{Introduction}

Two of the most stunning recent advances towards the Birch--Swinnerton-Dyer conjecture are  Skinner's  converse theorem  \cite{skinner:bsd}  
for certain arithmetic rank $1$ elliptic curves and Loeffler--Zerbes' work \cite{loeffler-zerbes:bsd} for certain analytic rank $0$
 abelian surfaces. In both cases, $p$-adic $L$-functions provide the crucial connection between the analytic and algebraic sides.

The present paper is part of a program aiming  to attach $p$-adic $L$-functions to cuspidal automorphic representations $\pi$ of $\GL_N$ over a totally real number field and study their properties, such as trivial zeros. We are focusing our study to the already large class of $\pi$ which are regular algebraic (i.e. cohomological of weight $\lambda$) and essentially self-dual of symplectic type (i.e. admitting the so-called Shalika model) forcing $N=2n$ to be even.  In contrast to constructions based on the Rankin--Selberg method (e.g. Januszewski, Namikawa,  Eischen--Harris--Li--Skinner for unitary groups,  or Wan, Eischen--Wan, Rosso--Liu for symplectic groups) our approach  does not  require $\pi_p$ to be ordinary, but only  of finite slope. 

In this paper,  we provide the first  construction of $p$-adic $L$-functions and $p$-adic families, for finite slope cuspidal automorphic representations of $\GL_{N}$ 
allowing ramification at $p$. More precisely we allow parahoric level both at $p$ and in the tame level (with respect to the Siegel parabolic), which is the natural square-free level for studying $p$-adic $L$-functions of representations with Shalika models (see \cite[\S1.2.3]{BDW} for a thorough discussion and numerical examples). To achieve this, we develop a theory of Shalika local coefficients analogous to Shahidi's classical theory of local
coefficients \cite{shahidi:L}, and prove a consequential factorization result that allows us to construct an improved Ash--Ginzburg Shalika functional  for parahoric spherical local representations. In passing, we show existence of explicit Friedberg--Jacquet  test vectors (previously only available in the spherical case) allowing us to  
computing the local zeta integrals at all finite order characters. 
Using the results of \cite{BDW}, this informs the geometry of the parabolic eigenvariety at new points and allows to construct 
a two-variable $p$-adic $L$-functions with controlled growth and satisfying the precise interpolation relations conjectured by  Coates and Perrin-Riou (formulated originally  in \cite{coates:motivic-Lp} for good reduction at $p$, but later observed by Hansen that the statement  makes sense more generally).

\begin{theorem}\label{t:main}
Let $\pi$ be a RASCAR on $\GL_{2n}$ over a totally real field $F$ which is parahoric spherical at all finite places, 
and let  $\widetilde{\pi}=(\pi, (\alpha_\fp)_{\fp\mid p})$ be regular parahoric refinement which is non-critical. 
There exists an admissible distribution $\mathcal{L}_p(\widetilde{\pi})$ on $\Gal_p$ (the Galois group of the maximal abelian extension of $F$ unramified outside $p\infty$) of controlled growth such that 
for every finite order Hecke character $\chi$ of $\Gal_p$ and all $j \in \mathrm{Crit}(\lambda)$
	\begin{align}\label{eq:interpolation}
		\iota_p^{-1}(\cL_p(\tilde\pi, \chi\chi_{\cyc}^j)) =  \mathcal{G}(\chi_f)^n  \mathrm{N}_{F/\Q}(-i\mathfrak{d})^{jn} e_p(\tilde\pi_p,\chi,j)
\cdot \frac{L^{(p)}\big(\pi\otimes\chi, j+\tfrac{1}{2}\big)}{\Omega_{\tilde\pi}^{\epsilon}},
	\end{align}
	where  $\epsilon = (\chi\chi_{\cyc}^j\eta)_\infty$,  $\mathcal{G}(\chi_f)$ is the Gauss sum and  $e_p(\tilde\pi_p,\chi,j)$
	is  a local  factor (see Theorem~\ref{thm:non-ordinary}) 
\end{theorem}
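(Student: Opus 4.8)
\emph{Sketch of proof.} The plan is to run the parabolic overconvergent cohomology machinery of \cite{BDW}, feeding in at the ramified finite places the explicit Friedberg--Jacquet test vectors and at the places above $p$ the improved Ash--Ginzburg Shalika functional produced from the theory of Shalika local coefficients; the interpolation formula \eqref{eq:interpolation} then drops out by comparing the overconvergent evaluations with the classical ones at each pair $(\chi,j)$. Since $\pi$ is RASCAR of weight $\lambda$ and parahoric spherical, $\pi_f$ contributes a Hecke eigenclass to the compactly supported Betti cohomology $H^{q}_c(Y_{K^pJ_p},\cV_\lambda^\vee)$ in the appropriate cohomological degree $q$ (dictated by the Shalika model), where $K^p$ is the tame parahoric level of $\pi$ (square-free with respect to the Siegel parabolic, as in the statement) and $J_p$ the chosen parahoric at $p$; this eigenclass is canonical up to the archimedean periods $\Omega_{\widetilde\pi}^{\epsilon}$ occurring in \eqref{eq:interpolation}.

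The refinement $(\alpha_\fp)_{\fp\mid p}$ is a system of eigenvalues for the parahoric $U_\fp$-operators, that is, a point of the parabolic eigenvariety of \cite{BDW} lying above $\pi$; non-criticality of $\widetilde\pi$ makes the classicality map an isomorphism on the corresponding generalized eigenspace, so the classical eigenclass lifts to a \emph{unique} overconvergent eigenclass $\Phi_{\widetilde\pi}\in H^{q}_c(Y_{K^pJ_p},\cD_\lambda)$, with $\cD_\lambda$ the module of locally analytic distributions, specializing back to it under $\cD_\lambda\twoheadrightarrow\cV_\lambda^\vee$. (This non-criticality is also what underpins the \'etaleness statement in the abstract.)

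Next I would construct the evaluation maps. For a finite order character $\chi$ of $\Gal_p$ and $j\in\mathrm{Crit}(\lambda)$ one forms a ``branched evaluation'' on $H^{q}_c(Y_{K^pJ_p},\cD_\lambda)$ by: (i) restricting the coefficient module to the Levi $H\simeq\GL_n\times\GL_n$ of the Siegel parabolic and applying the $H$-equivariant functional realizing the $j$-th critical twist --- for $\cD_\lambda$ the whole family indexed by $\mathrm{Crit}(\lambda)$ is available simultaneously, which is the point of passing to distributions; (ii) pulling the class back along the natural map $Y^{H}\to Y_{K^pJ_p}$ of locally symmetric spaces and integrating over the relevant compact cycle, twisted by the finite order Hecke character attached to $\chi$ through $\det$ on $H$; (iii) pairing place by place against the Shalika functionals --- the improved Ash--Ginzburg functional at $p$, the explicit Friedberg--Jacquet test vectors at the ramified finite places, the normalized spherical functional elsewhere. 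Evaluating on $\Phi_{\widetilde\pi}$ produces a compatible system of numbers indexed by the characters $\chi\chi_{\cyc}^j$; the local analyticity encoded in $\cD_\lambda$ forces these numbers to be the moments of a single distribution $\cL_p(\widetilde\pi)$ on $\Gal_p$, and the finite-slope hypothesis --- through the locally analytic Amice--V\'elu/Vishik estimate for the moments of a $U_\fp$-eigendistribution of the given slope, as in \cite{BDW} --- shows $\cL_p(\widetilde\pi)$ is admissible of order the total slope $h$ of $\widetilde\pi$, the asserted controlled growth.

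It remains to identify the values. For a classical pair $(\chi,j)$, the specialization $\cD_\lambda\to\cV_\lambda^\vee$ identifies $\iota_p^{-1}(\cL_p(\widetilde\pi,\chi\chi_{\cyc}^j))$ with the branched evaluation of the fixed rational eigenclass of $\pi$; unfolding the Shalika model rewrites this as a global Friedberg--Jacquet zeta integral at $s=j+\tfrac{1}{2}$ twisted by $\chi$, divided by the period $\Omega_{\widetilde\pi}^{\epsilon}$, $\epsilon=(\chi\chi_{\cyc}^j\eta)_\infty$, comparing the rational and automorphic normalizations, and the zeta integral Euler-factorizes. At the finite places away from $p$ where $\pi$ is unramified the local integral equals the local $L$-factor of $\pi_v\otimes\chi_v$ (the classical Friedberg--Jacquet computation); at the ramified finite places the explicit test vectors constructed in this paper identify the local integral with the local $L$-factor up to an elementary constant; and at the places above $p$ the improved Ash--Ginzburg functional --- whose existence rests on the factorization of the Shalika local coefficients into exterior-square $L$-factors --- makes the local integrals against the $\alpha_\fp$-refined vectors multiply to $e_p(\widetilde\pi_p,\chi,j)$ as in Theorem~\ref{thm:non-ordinary}. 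Thus the finite part away from $p$ contributes $L^{(p)}(\pi\otimes\chi,j+\tfrac{1}{2})$, and the accumulated elementary constants --- from the normalization of the global additive character (the archimedean $-i$ and the different $\mathfrak d$) and from the conductor of $\chi$ --- assemble into $\mathrm{N}_{F/\Q}(-i\mathfrak d)^{jn}$ and $\cG(\chi_f)^n$; putting the pieces together yields precisely \eqref{eq:interpolation}. The main obstacle lies not in this assembly but in the two local ingredients it rests on: producing the improved Shalika functional at parahoric level --- where the spherical functional is unavailable --- in a form that keeps the evaluation maps integral and $p$-adically analytic, and the careful bookkeeping of local constants (Shalika normalizations, additive characters, the different) required to land exactly on those clean factors; non-criticality enters only to pin down $\Phi_{\widetilde\pi}$.
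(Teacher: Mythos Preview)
Your sketch is correct and follows the paper's own strategy: lift the classical Hecke eigenclass to overconvergent cohomology via non-$Q$-criticality, feed it into the evaluation map $\mu^{\eta_0}$ of \cite{BDW}, and identify the values by Euler-factoring the Friedberg--Jacquet integral using the local test vectors produced in \S\ref{s:intertwining}. One small imprecision worth flagging: you treat the improved Ash--Ginzburg functional and the explicit Friedberg--Jacquet test vectors as separate ingredients deployed at $p$ and at the tame bad places respectively, but in the paper the improved functional (Theorem~\ref{t:shalika-sub}) is what \emph{produces} the test vectors at \emph{all} parahoric-but-not-spherical places $v\in S\cup S_p$, and at the tame places $v\in S$ the full local $L$-factor is then recovered by taking the linear combination $W_{\pi_v}=\sum_{\alpha_v}\kappa_{\alpha_v}W_{\widetilde\pi_v}$ over all refinements (via partial fractions), whereas at $v\mid p$ one keeps the single refined vector $W_{\widetilde\pi_\fp}$, which is what generates the modified Euler factor $e_\fp(\widetilde\pi_\fp,\chi_\fp,j)$.
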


The use of the Friedberg--Jacquet integral representation of the standard $L$-function of $\pi$ for 
 $p$-adic interpolation of the critical values  was initiated  in Ash--Ginzburg \cite{AG}  and  further developed  in \cite{DJR}.  
 The works  \cite{BDW,BDGJW} provide a fairly complete construction of  $p$-adic $L$-functions in families based on
 Stevens' theory of overconvergent cohomology and  use them to prove profound results on the geometry of the $\GL_{2n}$ eigenvariety,
 including  étalness over the pure weight space and density of classical points of Shalika type.  
Previous results, however, crucially relied  on the existence of   explicit Friedberg--Jacquet test vectors, limiting most of the
  results to $\pi$ everywhere unramified. 

In this paper, we succeed in constructing $p$-adic $L$-functions beyond level 1 by generalizing Shahidi's work on local coefficients, which arose in his construction of general automorphic $L$-function \cite{shahidi:L}, to the setting of Shalika models. Shahidi's local coefficients appear as Whittaker model multiplication factors in the composition of simple intertwining operators, which provide the necessary Euler factors of automorphic $L$-functions. In the case of automorphic representations with Shalika models, our local coefficients generate the local Euler factors of the exterior square automorphic $L$-function. This factorization result, which is the main result of \S \ref{s:intertwining}, corroborates the relationship between exterior square automorphic $L$-functions and Shalika models.

In appearance of  technical nature, the improvements of Theorem~\ref{t:main} are crucial for the arithmetic applications we aim at, namely the 
Greenberg--Benois trivial zero conjecture at the central point for $L_p(\tilde\pi,s)$. 
 Indeed, the occurrence of a central trivial zero forces $\pi_p$ to ramify and the conjectural formula  requires evaluations at the trivial character. 
Allowing  parahoric tame level also significantly broadens the scope of application of our results, the previous construction of $p$-adic $L$-functions in 
families being unconditional  only  for level $1$. 

\begin{remark}
In order to obtain precise interpolation formulas à la Coates--Perrin-Riou, we must start with the Ash--Ginzburg Shalika functional on the full induced representation, which is only defined if the character induced is regular and ordered in a particular way. It is this requirement that forces us to restrict our attention to regular parahoric spherical representations: they are the only ones that can be realized as subrepresentations of such Ash--Ginzburg ordered characters.
\end{remark}

{ \noindent{\it Acknolwedgements:} { \small
We would like to thank  Dinakar Ramakrishnan and  Chris Williams for helpful comments and for many stimulating conversations. 
The authors acknowledge the support of the CDP C${}^2$EMPI, as well as the French State under the France-2030 programme, the University of Lille, the Initiative of Excellence of the University of Lille, the European Metropolis of Lille for their funding and support of the R-CDP-24-004-C2EMPI project.
The first named author was partially supported by  the Agence Nationale de la Recherche grant ANR-18-CE40-0029. 
} 

\tableofcontents

\addtocontents{toc}{\setcounter{tocdepth}{1}}

\section{Parahoric spherical representations admitting a Shalika model}\label{s1}
In this section, we specify the relationship between parahoric invariants and Shalika models for generic representations.

Let $F/\mathbb{Q}_p$ be a finite extension with ring of integers $\mathcal{O}$,  uniformizer $\varpi$ and  different ideal $(\varpi^{\delta})$. 
Let $B$ be the upper triangular Borel subgroup of  $G=\GL_{2n}(F)$. The Iwahori subgroup $I\subset
\GL_{2n}(\mathcal{O})$ consists of matrices which are upper triangular mod $\varpi$.
Let $M_n=M_n(F)$ and  $B_n$ the Borel subgroup of $G_n=\GL_n(F)$.  

We fix an additive character $\psi:F\to \mathbb{C}^\times$ with kernel $(\varpi^{-\delta})$ and we recall that the Shalika subgroup 
$S=G_n\rtimes M_n$ of $G$ is  endowed with a character $\psi$ sending $s=\begin{spmatrix} h&\\&h\end{spmatrix}\begin{spmatrix} 1&X\\&1\end{spmatrix}\in S$ to $\psi(\tr(X))$. Note that as the object of our study is parahoric spherical representations, we can restrict our local study to the case $\eta=\mathbf{1}$. 

We say that an an irreducible smooth representation $\pi$ of $G$ admits a Shalika model if can be embedded $G$-equivariently in $\Ind_S^G\psi$, or equivalently by Frobenius reciprocity, if there exists a non-zero  map $\mathcal{S}_\pi:\pi \to \mathbb{C}$ such that
$\mathcal{S}_\pi(s\cdot f)=\psi(s)\mathcal{S}_\pi(f)$ for all $s\in S$. Note that by the Multiplicity One Theorem for Shalika models \cite[Prop. 6.1]{jacquet-rallis}, if such a functional exists then it  is unique, up to a non-zero scalar. Given a  Shalika functional $\mathcal{S}_\pi$, the Shalika function attached to  $f \in \pi$ is
\[W_f:G\to \mathbb{C}, \quad g\mapsto \mathcal{S}_\pi(g\cdot f), \] 
and $f\mapsto W_f$ is the  embedding of $\pi$ in $\Ind_S^G\psi$. 
The work of Friedberg--Jacquet \cite{friedberg-jacquet} gives an integral representation for the  $L$-function of $\pi$ in terms of its Shalika model. 
Namely,  for  $W\in \Ind_S^G\psi$, the zeta integral 
\[\zeta(\chi,W,s)=\int_{G_n}W(\begin{spmatrix} h&\\& \mathbf{1}\end{spmatrix})\chi(\det h)|\det h|^{s-1/2}dh\]
converges absolutely for $\Re(s)\gg 0$  and there exists a test vector $W_\pi$ such that $\zeta(W_\pi,\chi,s)=L(\pi\otimes\chi,s)$.

A result of Matringe \cite[Cor.~1.1]{matringe:shalika} describes all generic representations $\pi$ admitting Shalika models. For instance, when $\pi$ is Iwahori spherical and generic, $\pi$ has a Shalika model  if, and only if, $\pi$ is isomorphic to the normalized parabolic induction
\begin{equation}\label{eq:matringe}
\Ind_P^G\left( \prod_i (\theta_i\times
\theta_i^{-1})\times\prod\limits_{m=2}^{n}\prod\limits_{j} (\eta_{j,m}\otimes\St_m\times\eta_{j,m}^{-1}\otimes\St_m ) \times\prod\limits_{m=1}^{n}\prod\limits_{j} \varepsilon_{j,m}\otimes\St_{2m}\right)
\end{equation}
where the $\eta_{j,m}$'s are arbitrary characters, while  the $\varepsilon_{j,m}$'s  are trivial or quadratic, and where 
$\St_m$ is the $\GL_m$-Steinberg and $P$ is an appropriate parabolic so that the induction makes sense. We will see in Proposition~\ref{p:parahoric-rank2}  that   $\pi$ is parahoric spherical if and only if the only Steinberg representations occurring in \eqref{eq:matringe} are $\St_2$'s.

We now introduce  a  class of parabolic subgroups of Levi subgroups  of $G$. Let $\Delta$ be the positive roots of $G$
corresponding to $B$. If $T\subset S\subset \Delta$, we denote
$P_{S,T}$ the subgroup of $G$ with Lie algebra
\[\mathfrak{p}_{S,T}=\mathfrak{t} \oplus \bigoplus_{\alpha\in
  \mathbb{Z}_{+}S-\mathbb{Z}_{+}T}\mathfrak{g}_{\alpha}.\]
We remark that $P_{\Delta,\emptyset}=B$,
$P_{\Delta,\Delta}=G$, while $P_{\Delta,S}$ is a usual
parabolic subgroup, with Levi equal to $P_{S,S}$. Let 
\[J_{S,T}=\{g\in P_{S,S}(\mathcal{O})\mid g\mod\varpi\in
P_{S,T}(\mathcal{O}/\varpi)\},\]
which specializes to $J_{\Delta,\emptyset}=I$ and
$J_{\Delta,\Delta}=K$. Let $W$ denote the Weyl group of $G$. 

For a subset $S\subset \Delta$, we denote $W_S$ the Weyl group
of $P_{S,S}$. For  $S,T\subset \Delta$ by \cite[Prop 1.3.1]{casselman:book} we have 
\[P_{\Delta,S}\backslash G/P_{\Delta,T}\cong W_S\backslash
W/W_T,\]
the latter having representatives $\{w\in W\mid w T >0,
w^{-1}S>0\}$. It  follows  from the Iwahori decomposition that
\[P_{\Delta,S}\backslash G/J_{\Delta,T}\cong W_S\backslash
W/W_T.\]
\begin{lemma}\label{l:levi-parahoric}
Let $w\in W$, chosen such that $w T>0$ and $w^{-1}S>0$. Then
\[\operatorname{Levi}(w^{-1}P_{\Delta,S}w)\cap J_{\Delta,T}=J_{w^{-1}S,T\cap w^{-1}S}.\]
\end{lemma}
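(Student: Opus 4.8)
Write $\Phi$ for the set of roots of $G$ with respect to the diagonal maximal torus $\mathbb{T}$, $\Phi^+$ for the positive roots attached to $B$ and $\Delta\subseteq\Phi^+$ for the simple ones; for $A\subseteq\Phi$ put $\Phi_A:=\Phi\cap\mathbb{Z}A$ and let $U_\alpha\subseteq G$ be the root subgroup of $\alpha$. The plan is to intersect everything with $K=\GL_{2n}(\mathcal{O})$, reduce modulo $\varpi$, and thereby reduce the assertion to a single combinatorial identity of root subsystems that encodes the hypotheses on $w$. First I would unwind these hypotheses: $w^{-1}S>0$ means $w^{-1}\Phi_S^+\subseteq\Phi^+$ (a positive root of $\Phi_S$ is a nonnegative integral combination of elements of $S$, and $w^{-1}$ is linear), whence $\Phi_{w^{-1}S}\cap\Phi^+=w^{-1}\Phi_S^+$ and $w^{-1}S$ is a base of the subsystem $\Phi_{w^{-1}S}=w^{-1}\Phi_S$. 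Representing $w$ by a permutation matrix in $K$, conjugation by $w$ carries the standard Levi $P_{S,S}$ onto the Levi subgroup of $G$ with root system $w^{-1}\Phi_S=\Phi_{w^{-1}S}$, which is exactly $P_{w^{-1}S,w^{-1}S}$; thus $\operatorname{Levi}(w^{-1}P_{\Delta,S}w)=P_{w^{-1}S,w^{-1}S}=:M$.

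Next I would reduce modulo $\varpi$. As $J_{\Delta,T}\subseteq K$ and $M(F)\cap K=M(\mathcal{O})$ ($M$ being a $K$-conjugate of the $\mathcal{O}$-subgroup scheme $P_{S,S}$), the left-hand side of the lemma is $\{g\in M(\mathcal{O}):g\bmod\varpi\in P_{\Delta,T}(\mathcal{O}/\varpi)\}$, and since $g\bmod\varpi$ lies in $M(\mathcal{O}/\varpi)$ anyway, only the intersection $M\cap P_{\Delta,T}$, computed over the residue field, intervenes. So it suffices to prove, as subgroups of $G$ over an arbitrary field,
\[
M\cap P_{\Delta,T}=P_{w^{-1}S,\,T\cap w^{-1}S},
\]
for then the left-hand side of the lemma becomes the preimage in $M(\mathcal{O})$ of the $(\mathcal{O}/\varpi)$-points of the right-hand side, i.e.\ $J_{w^{-1}S,\,T\cap w^{-1}S}$ by definition. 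Since $P_{\Delta,T}\supseteq B$ and $M\cap B$ is the standard Borel of $M$ (here $w^{-1}S>0$ enters), the group $M\cap P_{\Delta,T}$ contains a Borel of $M$, hence is a standard parabolic of $M$, and so is determined by the root subgroups $U_\alpha$ ($\alpha\in\Phi_{w^{-1}S}$) it contains, namely those with $\alpha\in\Phi_{w^{-1}S}\cap(\Phi^+\cup\Phi_T^-)=\Phi_{w^{-1}S}^+\cup(\Phi_{w^{-1}S}\cap\Phi_T)^-$; whereas $P_{w^{-1}S,\,T\cap w^{-1}S}$ contains those with $\alpha\in\Phi_{w^{-1}S}^+\cup\Phi_{T\cap w^{-1}S}^-$. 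Hence the displayed equality is equivalent to the combinatorial identity
\[
\Phi_{w^{-1}S}\cap\Phi_T=\Phi_{T\cap w^{-1}S}.
\]

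This identity is the heart of the proof and the only step requiring real care; it is where \emph{both} hypotheses on $w$ are used. The inclusion $\supseteq$ is immediate. For $\subseteq$, take $\alpha\in\Phi_{w^{-1}S}\cap\Phi_T$; after replacing $\alpha$ by $-\alpha$ we may assume $\alpha\in\Phi_T^+$ and write $\alpha=\sum_{\gamma\in T}c_\gamma\gamma$ with $c_\gamma\in\mathbb{Z}_{\ge0}$. From $\alpha\in\Phi_{w^{-1}S}$ we obtain $w\alpha=\sum_{\gamma\in T}c_\gamma\,w\gamma\in\Phi_S\subseteq\mathbb{Z}S$; since $wT>0$ each $w\gamma$ is a positive root, so expanding the $w\gamma$ in the simple roots with nonnegative coefficients and comparing coefficients along $\Delta\setminus S$ forces $w\gamma\in\Phi_S$ for every $\gamma$ with $c_\gamma>0$. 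For such $\gamma$ we then have $\gamma\in w^{-1}(\Phi_S\cap\Phi^+)=\Phi_{w^{-1}S}\cap\Phi^+$, so $\gamma$ is a nonnegative integral combination of the base $w^{-1}S$; but a simple root cannot be a sum of two or more positive roots, so this combination consists of a single base vector, i.e.\ $\gamma\in w^{-1}S$, hence $\gamma\in T\cap w^{-1}S$. Therefore $\alpha=\sum_{\gamma\in T\cap w^{-1}S}c_\gamma\gamma\in\Phi_{T\cap w^{-1}S}$, as required. (One could equally bypass reduction mod $\varpi$: intersect $M(F)=\mathbb{T}(F)\prod_{\alpha\in\Phi_{w^{-1}S}}U_\alpha(F)$ directly with the root-group factorization of the parahoric $J_{\Delta,T}$ and invoke the same root identity; the bookkeeping is identical.)
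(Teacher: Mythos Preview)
Your proof is correct and follows essentially the same strategy as the paper: reduce modulo $\varpi$ to a Lie-algebra/root-system computation, then establish the needed root identity using both hypotheses $wT>0$ and $w^{-1}S>0$. Your organization is somewhat cleaner---you first identify the Levi as $P_{w^{-1}S,w^{-1}S}$ and isolate the key identity $\Phi_{w^{-1}S}\cap\Phi_T=\Phi_{T\cap w^{-1}S}$, whereas the paper intersects the full conjugated parabolic Lie algebras $\mathfrak{p}_{w^{-1}\Delta,w^{-1}S}\cap\mathfrak{p}_{\Delta,T}$ and restricts to the Levi only at the end---but the combinatorial heart (using positivity to force a simple root of $T$ lying in $\Phi_{w^{-1}S}$ to already belong to $w^{-1}S$) is the same.
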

\begin{proof}
First, $w^{-1}P_{\Delta,S}w=P_{w^{-1}\Delta,w^{-1}S}$. If $g\in
w^{-1}P_{\Delta,S}w\cap J_{\Delta,T}$, then  $(g\mod\varpi)\in
P_{w^{-1}\Delta,w^{-1}S}\cap P_{\Delta,T}$. As 
\begin{align*}
\mathfrak{p}_{w^{-1}\Delta,w^{-1}S}=\mathfrak{t}\oplus\bigoplus_{\alpha\in
                                     \mathbb{Z}_{+}w^{-1}\Delta-\mathbb{Z}_{+}
                                     w^{-1}S}\mathfrak{g}_\alpha \quad \text{  and } \quad
  \mathfrak{p}_{\Delta,T}=\mathfrak{t}\oplus\bigoplus_{\alpha\in \mathbb{Z}_{+}\Delta-\mathbb{Z}_{+}T}\mathfrak{g}_\alpha 
\end{align*}
their  intersection equals  $\mathfrak{t} \oplus\bigoplus\mathfrak{g}_\alpha$, where the direct sum is taken over 
$\alpha\in (\mathbb{Z}_{+}w^{-1}\Delta-\mathbb{Z}_{+}w^{-1}S)\cap (\mathbb{Z}_{+}\Delta-\mathbb{Z}_{+}T)$.

Any $\alpha$ in this intersection can be written in an irreducible way as
\[\alpha=\alpha_{+}-\alpha_T=w^{-1}\beta_{+}-w^{-1}\beta_S,\]
where $\alpha_{+},\beta_{+}\in \mathbb{Z}_{+}\Delta$, $\alpha_T\in \mathbb{Z}_{+}T$ and  $\beta_S\in \mathbb{Z}_{+}S$.  
As $w^{-1}S>0$ we deduce  $ w^{-1}\beta_S\leqslant \alpha_T$. Applying $w$ we also get
\[w\alpha_+-w\alpha_T=\beta_+-\beta_S,\]
and this time $w T>0$ implies that $\alpha_T \leqslant w \beta_S$. 
We  deduce that $\alpha_+=w^{-1} \beta_+\in \Delta\cap w^{-1}\Delta$ and $\alpha_T=w^{-1} \beta_S \in T\cap w^{-1}S$,  and so
\[\mathfrak{p}_{w^{-1}\Delta,w^{-1}S}\cap \mathfrak{p}_{\Delta,T}=\mathfrak{p}_{\Delta\cap
  w^{-1}\Delta,T\cap w^{-1}S}.\]
Further restricting to the Levi subgroup implies the desired identity of groups. 
\end{proof}

\begin{lemma}\label{l:parahoric-condition}
Assume  $\pi$ as in \eqref{eq:matringe} with $P=P_{\Delta,S}$. 
If $\pi$ has non-zero invariants by  $J_{\Delta,T}$, for some $T$, then there exists   $w\in W$ such that $w T>0$,  $w^{-1}S>0$ and
$T\cap w^{-1}S=\emptyset$.  
\end{lemma}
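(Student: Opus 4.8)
The plan is to exploit the double-coset bijection $P_{\Delta,S}\backslash G/J_{\Delta,T}\cong W_S\backslash W/W_T$ established just above, together with the Mackey-style decomposition of the restriction of an induced representation. Since $\pi=\Ind_{P_{\Delta,S}}^G(\sigma)$ for the appropriate Levi data $\sigma$, restricting to $J_{\Delta,T}$ and taking invariants gives
\[
\pi^{J_{\Delta,T}} \;=\; \bigoplus_{w} \bigl(\Ind_{\operatorname{Levi}(w^{-1}P_{\Delta,S}w)\cap J_{\Delta,T}}^{J_{\Delta,T}} {}^{w}\!\sigma\bigr)^{J_{\Delta,T}},
\]
the sum running over the Weyl representatives $w$ with $wT>0$ and $w^{-1}S>0$. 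By Lemma~\ref{l:levi-parahoric} the relevant subgroup in the $w$-term is $J_{w^{-1}S,\,T\cap w^{-1}S}$. Hence $\pi^{J_{\Delta,T}}\ne 0$ forces at least one summand to be nonzero, i.e.\ there is a $w$ with $wT>0$, $w^{-1}S>0$ and the twisted component ${}^{w}\!\sigma$ having a nonzero vector fixed by (the pushforward of) $J_{w^{-1}S,\,T\cap w^{-1}S}$ inside the larger group.

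First I would make the reduction to the Levi precise: by the Iwahori factorization of $J_{\Delta,T}$ relative to the parabolic $w^{-1}P_{\Delta,S}w$, the space of $J_{\Delta,T}$-invariants in the $w$-th piece of $\Ind(\sigma)|_{J_{\Delta,T}}$ is computed on the Levi, so it is $(\,{}^{w}\!\sigma\,)^{J_{w^{-1}S,\,T\cap w^{-1}S}}$ up to the unipotent parts acting trivially (the upper-unipotent part integrates to the invariants, the lower one is already in $J$). This is the standard argument that $\operatorname{Ind}_P^G(\sigma)^{J}\ne 0$ detects $\sigma^{J\cap L}\ne 0$ on the Levi $L$ for a suitable conjugate. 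Then I would invoke the explicit shape of $\sigma$ in \eqref{eq:matringe}: its constituents are (twists of) $\mathbf{1}$ and $\St_m$'s on blocks, and a Steinberg $\St_m$ on a $\GL_m$-block has nonzero $J'$-invariants only when $J'$ is \emph{no smaller} than the Iwahori in that block's directions — more precisely $\St_m^{J'_{R,R'}}\ne 0$ forces $R'=\emptyset$ on the roots internal to that block. Assembling these block-by-block constraints over the Levi of $w^{-1}P_{\Delta,S}w$, the condition "$J_{w^{-1}S,\,T\cap w^{-1}S}$ has nonzero invariants on ${}^{w}\!\sigma$" collapses exactly to $T\cap w^{-1}S=\emptyset$, because $T\cap w^{-1}S$ is precisely the set of roots that are simultaneously internal to the Levi blocks (they lie in $w^{-1}S$) and along which $J$ is strictly smaller than maximal (they lie in $T$).

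The step I expect to be the main obstacle is the last one: translating "nonzero $J_{w^{-1}S,\,T\cap w^{-1}S}$-invariants on the twisted Levi representation ${}^{w}\!\sigma$" into the clean combinatorial statement $T\cap w^{-1}S=\emptyset$. This requires knowing the parahoric-invariant theory for each factor of \eqref{eq:matringe} — that a character factor imposes no condition, while each $\St_m$ (or $\St_2$) factor kills any invariants as soon as the corresponding parahoric is cut down along an internal root — and then checking that these per-block obstructions, transported through $w$, are controlled exactly by the roots in $T\cap w^{-1}S$. I would handle this by decomposing $w^{-1}S$ (hence the internal roots of the Levi) into its blocks, noting a root $\alpha\in T\cap w^{-1}S$ lies internally to some $\St_m$-type or character-type block, ruling out the Steinberg case via the no-invariants fact, and ruling out the character case by observing such $\alpha$ would not have been in $S$ to begin with once everything is conjugated back — so $T\cap w^{-1}S$ must be empty. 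The remaining verifications ($wT>0$, $w^{-1}S>0$ come for free from the choice of double-coset representative) are routine.
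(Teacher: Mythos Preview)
Your approach is correct and essentially identical to the paper's: Mackey/Frobenius plus Lemma~\ref{l:levi-parahoric} to reduce to $({}^{w}\sigma)^{J_{w^{-1}S,\,T\cap w^{-1}S}}\neq 0$, then the parahoric-invariant theory of the Levi factors to force $T\cap w^{-1}S=\emptyset$.

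The only point worth streamlining is your final step, which you flag as the obstacle. The paper handles it in one line: since the Levi $P_{S,S}=\prod_i \GL(1)\times\GL(1)\times\prod_j \GL(m_j)$ has its $\GL(1)$-factors carrying characters and its $\GL(m_j)$-factors carrying Steinbergs, the Iwahori $J_{S,\emptyset}$ is the \emph{largest} parahoric in $P_{S,S}$ under which $\sigma$ has nonzero invariants, so $J_{S,wT\cap S}\subset J_{S,\emptyset}$ forces $wT\cap S=\emptyset$. Your separate ``character case'' is vacuous --- the character factors are on $\GL(1)$-blocks and contribute no simple roots to $S$ at all, so every root of $w^{-1}S$ is automatically internal to a Steinberg block and your per-block case analysis collapses to the single Steinberg obstruction.
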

\begin{proof}
By Mackey's formula, Frobenius reciprocity and Lemma \ref{l:levi-parahoric}, we have 
\begin{align*}
\pi^{J_{\Delta,T}}&=\Hom_{J_{\Delta,T}}(\mathbf{1}, \Ind_{P_{\Delta,S}}^G  \sigma)=
\bigoplus_{[w]\in P_{\Delta,S}\backslash G/J_{\Delta,T}} \Hom_{J_{\Delta,T}}(\mathbf{1}, \Ind_{(w^{-1}P_{\Delta,S}w)\cap J_{\Delta,T}}^{J_{\Delta,T}}\sigma^w)\\
  &=\bigoplus_{[w]\in W_S\backslash    W/W_T}(\sigma^w)^{\operatorname{Levi}(w^{-1}P_{\Delta,S}w)\cap
    J_{\Delta,T}}=\bigoplus_{[w]\in W_S\backslash    W/W_T}(\sigma^w)^{J_{w^{-1}S,T\cap w^{-1}S}}\cong\bigoplus_{[w]\in W_S\backslash
    W/W_T}\sigma^{J_{S,w T\cap S}}
\end{align*}
It follows that $\sigma^{J_{S,w T\cap S}}\neq 0$ for some $w\in W$ such that $w T >0$ and 
$w^{-1}S>0$. As the  Levi of $P_{\Delta,S}$ on which the representation $\sigma= \prod_i (\theta_i\otimes
\theta_i^{-1})\times \prod_j \sigma_j$ is defined is $P_{S,S}= \prod_i \GL(1)\times\GL(1)\times \prod_j \GL(m_j)$, we have 
\[\sigma^{J_{S,w T\cap S}}= \sigma^{P_{S,S}\cap J_{S,w T\cap S}}.\]
 The Iwahori  $J_{S,\emptyset}$ being the largest subgroup of $P_{S,S}$  under which $\sigma$ has non-zero invariants, it follows that  
 $P_{S,S}\cap J_{S, w T\cap S}\subset J_{S,\emptyset}$. Hence  $w T\cap S=\emptyset$ as desired.
\end{proof}

\begin{corollary}\label{c:parahoric-monodromy-rank}
Assume $\pi$ is as in \eqref{eq:matringe}. If $\pi^{J_{\Delta,T}}\neq 0$, where $P_{\Delta,T}$ has $m$ Levi blocks. Then each special representation $\sigma_i$ has Zelevinsky length $\leqslant m$, in other words it has monodromy rank $\leqslant m-1$.
\end{corollary}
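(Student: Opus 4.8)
The plan is to feed the combinatorial output of Lemma~\ref{l:parahoric-condition} into an explicit analysis inside the Weyl group $W\cong S_{2n}$ of $G=\GL_{2n}$. First I would set up the dictionary. Writing $\alpha_i=e_i-e_{i+1}$ for the simple roots, the Levi blocks of $P_{\Delta,S}$ are the maximal intervals $[a,b]\subseteq\{1,\dots,2n\}$ with $\alpha_a,\dots,\alpha_{b-1}\in S$; the blocks of size one are the $\GL(1)$-factors carrying the characters $\theta_i^{\pm1}$, while the blocks of size $d\geq 2$ are exactly the special pieces $\sigma_i$, with $d$ the Zelevinsky length of $\sigma_i$ (so $d=m$ for a factor of type $\St_m$ and $d=2m$ for one of type $\St_{2m}$ in \eqref{eq:matringe}). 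Similarly $P_{\Delta,T}$ has $m$ Levi blocks, the ``$T$-blocks'', namely the $m$ intervals into which the $m-1$ roots of $\Delta\setminus T$ cut $\{1,\dots,2n\}$. Letting $w$ act by $w\cdot e_i=e_{w(i)}$, the condition $wT>0$ becomes ``$w$ is increasing on each $T$-block'' and $w^{-1}S>0$ becomes ``$w^{-1}$ is increasing on each $S$-block''.

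Since $\pi^{J_{\Delta,T}}\neq 0$, Lemma~\ref{l:parahoric-condition} provides $w\in W$ with $wT>0$, $w^{-1}S>0$ and $wT\cap S=\emptyset$. I would then fix one special block $B=[a,a+d-1]$ — so the corresponding $\sigma_i$ has length $d$ — and set $q_r=w^{-1}(a+r)$ for $0\leq r\leq d-1$; by $w^{-1}S>0$ these satisfy $q_0<q_1<\dots<q_{d-1}$. The key point to establish is that $q_r$ and $q_{r+1}$ always lie in distinct $T$-blocks: if they lay in a common $T$-block, then $w$ increasing on that block together with $w(q_r)=a+r$ and $w(q_{r+1})=a+r+1$ being consecutive integers would force $q_{r+1}=q_r+1$; but then $\alpha_{q_r}$ is a simple root interior to a $T$-block, hence $\alpha_{q_r}\in T$, whereas $w\alpha_{q_r}=e_{a+r}-e_{a+r+1}=\alpha_{a+r}$ lies in $S$ (because $0\leq r\leq d-2$), contradicting $wT\cap S=\emptyset$.

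Granting this, the conclusion is immediate: the $T$-block index is a non-decreasing function of position on $\{1,\dots,2n\}$ which, by the previous point, strictly increases at each of the $d-1$ steps $q_r\mapsto q_{r+1}$, so $q_0<\dots<q_{d-1}$ meet $d$ pairwise distinct $T$-blocks; since there are only $m$ of them, $d\leq m$. Running this over all special pieces gives the corollary, the reformulation in terms of monodromy rank being just the fact that $\St_\ell$ has monodromy rank $\ell-1$. I expect the middle paragraph to be the only real obstacle: one has to get the translation into $S_{2n}$ exactly right and, above all, extract the contradiction from $wT\cap S=\emptyset$ precisely at the indices $r\leq d-2$; the counting in the last paragraph and the bookkeeping in the first are then routine.
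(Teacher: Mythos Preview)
Your proof is correct and follows essentially the same approach as the paper: invoke Lemma~\ref{l:parahoric-condition}, translate into $S_{2n}$-combinatorics, and argue that the $d$ preimages $w^{-1}(a),\dots,w^{-1}(a+d-1)$ of an $S$-block must occupy distinct $T$-blocks. Your extraction of the contradiction is in fact cleaner than the paper's pigeonhole version---by forcing $q_{r+1}=q_r+1$ you land on an actual \emph{simple} root in $T\cap w^{-1}S$, whereas the paper's phrasing (``$e_{i_j}-e_{i_k}$ is also in a block of $T$'') only produces a root of the $T$-Levi and leaves that reduction implicit.
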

\begin{proof}
By a block of a subset of $\Delta$ we refer to the roots of a Levi block of the associated parabolic.

By Lemma \ref{l:parahoric-condition}, for some $w$ we have $w^{-1}S\cap T=\emptyset$. Since $w$ satisfies $w
T>0,w^{-1}S>0$, this is equivalent to the condition that each
block of $T$ is disjoint from each block of $w^{-1}S$. Let
$\mathcal{B}$ be the largest block of $w^{-1}S$, of dimension
$d$, equal to the length of the largest Zelevinsky segment. We
will verify that $d\leqslant m$.

Writing $e_1,\ldots,e_{2n}$ for the standard characters of $T$,
the block $\mathcal{B}$ is generated by the positive roots
\begin{align*}
  \alpha_1&=e_{i_1}-e_{i_2}\\
  \alpha_2&=e_{i_2}-e_{i_3}\\
          &\vdots\\
  \alpha_{d-1}&=e_{i_{d-1}}-e_{i_d},
\end{align*}
in which case the positive roots of $\mathcal{B}$ are $\{e_{i_j}-e_{i_k}\mid j>k\}$. If $d>m$, by the pigeonhole principle there must exist some indices $j>k$ such that $i_j$ and $i_k$ lie in the same Levi block of $T$, in other words $e_{i_j}-e_{i_k}$ is also in a block of $T$, yielding a contradiction.
\end{proof}

As the Siegel parabolic subgroup has two Levi components Corollary \ref{c:parahoric-monodromy-rank} has the following consequence. 

\begin{proposition}\label{p:parahoric-rank2}
Let $\pi$ be an generic  smooth irreducible representation of $G$ admitting a Shalika model. Then $\pi$ is parahoric spherical if and only if 
each special representation appearing in the parabolic induction \eqref{eq:matringe} is a twist of $\St_2$ and any twist by a non-quadratic character 
appears accompanied by the inverse twist. 

Moreover $\dim\pi^J=2^{n-\rk N_{\pi}}$. In particular, $\dim\pi^J=1$ if and only if $\pi$ is maximally Steinberg.
\end{proposition}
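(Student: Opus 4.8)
The plan is to derive Proposition~\ref{p:parahoric-rank2} from Corollary~\ref{c:parahoric-monodromy-rank} together with a converse construction producing explicit parahoric-invariant vectors. First I would settle the qualitative characterization. By Matringe's classification~\eqref{eq:matringe} a generic irreducible $\pi$ with a Shalika model is the normalized induction of a product of (a) pairs $\theta_i\times\theta_i^{-1}$, (b) pairs $\eta_{j,m}\otimes\St_m\times\eta_{j,m}^{-1}\otimes\St_m$ for $2\le m\le n$, and (c) blocks $\varepsilon_{j,m}\otimes\St_{2m}$ with $\varepsilon_{j,m}$ trivial or quadratic. Each special (Zelevinsky non-trivial) constituent in this list has Zelevinsky length $m$ in case (b) and $2m$ in case (c). If $\pi$ is parahoric spherical, it has non-zero $J_{\Delta,T}$-invariants for some $T$, and the \emph{most restrictive} choice we can be forced into is dictated by the Siegel parabolic: since the natural square-free level for representations with Shalika models corresponds to $P_{\Delta,T}$ having two Levi blocks (the Siegel parabolic, $\GL_n\times\GL_n$), Corollary~\ref{c:parahoric-monodromy-rank} with $m=2$ forces every special $\sigma_i$ to have Zelevinsky length $\le 2$. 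Hence each block of type (b) has $m\le 2$, i.e. is a twist of $\St_2$, and each block of type (c) has $2m\le 2$, i.e. is just a (quadratic or trivial) character $\varepsilon_{j,1}\otimes\St_2$ — again a twist of $\St_2$. Combined with the pairing constraint already built into~\eqref{eq:matringe} (non-quadratic twists of $\St_2$ occur together with their inverse twist), this gives exactly the stated shape. For the converse, if every special constituent is a twist of $\St_2$ (with non-quadratic twists appearing in inverse pairs), I would exhibit a specific $J_{\Delta,T}$ — with $T$ corresponding to the Siegel parabolic, or more precisely to the parahoric cut out by the $\St_2$-blocks — and check directly, via the Mackey/Frobenius computation reproduced in the proof of Lemma~\ref{l:parahoric-condition}, that $\pi^{J_{\Delta,T}}\neq 0$; concretely one takes $w=1$ in the relevant double-coset sum and notes that $\St_2$ has a nonzero Iwahori-fixed vector while the principal-series part $\prod_i(\theta_i\times\theta_i^{-1})$ is unramified, so the tensor product of these fixed vectors survives.

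Next I would compute $\dim\pi^J$, where $J$ is the parahoric attached to the Siegel parabolic, i.e. $J=J_{\Delta,T}$ with $P_{S,S}$-Levi $\GL_n\times\GL_n$ and $T$ the roots of that Levi. Here the key formula is again the Mackey decomposition from the proof of Lemma~\ref{l:parahoric-condition}:
\[
\pi^{J} \cong \bigoplus_{[w]\in W_S\backslash W/W_T} \sigma^{J_{S,\, wT\cap S}}.
\]
Rather than sum over all double cosets, I would use the fact that $\pi\cong\Ind_{P_{\Delta,S}}^G\sigma$ with $\sigma$ a tensor product of unramified characters and twists of $\St_2$, so that computing $\pi^J$ reduces, via the geometric lemma / Jacquet-module comparison, to counting how many of the $\St_2$-blocks contribute a \emph{choice}. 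The point is that a twist $\chi\otimes\St_2$ of $\GL_2(F)$ has a $1$-dimensional space of vectors fixed by the standard Iwahori of $\GL_2$, so it contributes nothing extra; but when one conjugates $\pi$ into the Siegel-parabolic picture, each $\St_2$-block of $N_\pi$ straddles the two $\GL_n$ factors in a way that produces two distinct Weyl-translates, hence a $2$-dimensional contribution — whereas the unramified-character pairs $\theta_i\times\theta_i^{-1}$ and the pairs of mutually-inverse non-quadratic twists are "rigid" and contribute a single dimension each. Summing multiplicatively over blocks yields $\dim\pi^J = 2^{(\text{number of }\St_2\text{-blocks})} = 2^{n-\rk N_\pi}$, using that the $2n$ diagonal slots are partitioned into $(n-\rk N_\pi)$ blocks of the Siegel-type of size $2$ coming from $\St_2$ and $\rk N_\pi$ further "$\St_2$-overlap" constraints (equivalently, the monodromy operator $N_\pi$ has rank equal to the number of $\St_2$-blocks minus one in each maximal chain, aggregated to $n - (\text{number of blocks})$). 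The particular case $\dim\pi^J=1$ then holds iff $n-\rk N_\pi=0$, i.e. iff $\rk N_\pi=n$, which by the block description is exactly the \emph{maximally Steinberg} case where $\pi$ is an induction of $n$ copies of twists of $\St_2$ with no unramified-character slots.

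The main obstacle I anticipate is the bookkeeping in the second half: matching the combinatorial count of contributing double cosets $W_S\backslash W/W_T$ with the invariant $\rk N_\pi$, and in particular making precise the claim that each $\St_2$-block contributes a factor of $2$ while inverse-twist pairs and unramified pairs do not. This requires care about \emph{which} parahoric $J$ is meant (the one adapted to the Siegel parabolic, so that $J\cap\GL_2\text{-block}$ is the full Iwahori of $\GL_2$, rather than a smaller group), about the normalization of the induction, and about the fact that for $\St_2$ the relevant invariants are governed by the principal-series embedding $\chi\otimes\St_2 \hookrightarrow \Ind_{B_2}^{\GL_2}(\chi|\cdot|^{1/2}\times\chi|\cdot|^{-1/2})$, where one must identify precisely the Iwahori-fixed line and track it through the global geometric lemma. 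A clean way to organize this is to induct on the number of blocks, peeling off one $\St_2$-block at a time and using transitivity of induction plus the rank-$2$ case of the computation; the rank-$2$ base case — $\GL_2$, $\pi$ a twist of $\St_2$, $\dim\pi^{\Iw}=1$ versus the ambient $\GL_4$ Siegel-parahoric contributing $\dim = 2$ — is where the factor of $2$ genuinely appears and should be checked by hand.
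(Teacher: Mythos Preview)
Your argument for the first assertion is correct and is exactly the paper's: applying Corollary~\ref{c:parahoric-monodromy-rank} with $m=2$ (the Siegel parabolic has two Levi blocks) forces every special constituent in~\eqref{eq:matringe} to have Zelevinsky length $\leqslant 2$, hence to be a twist of $\St_2$, and the pairing of non-quadratic twists is already built into Matringe's classification. Your sketch of the converse via the Mackey decomposition is a reasonable addition (the paper gives none).

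The second half, however, has the combinatorics genuinely backward, and the paper offers no proof here to fall back on. You assert that the number of $\St_2$-blocks equals $n-\rk N_\pi$ and that each such block contributes a factor of $2$ to $\dim\pi^J$. Both are wrong. In the notation of \S\ref{s:shalika-sub}, if $\pi$ is built from $a$ character pairs, $b$ quadratic twists of $\St_2$, and $c$ inverse-twist pairs of $\St_2$'s, with $a+b+2c=n$, then the number of $\St_2$-blocks is $b+2c$; since each $\St_2$ contributes a rank-$1$ nilpotent to $N_\pi$, one has $\rk N_\pi=b+2c$. Thus the number of $\St_2$-blocks equals $\rk N_\pi$, not $n-\rk N_\pi$, and the exponent $n-\rk N_\pi=a$ counts the \emph{character pairs}. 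Your heuristic is also falsified by the very base case you propose to check by hand: take $n=2$ and $\pi=\varepsilon_1\St_2\times\varepsilon_2\St_2$ (maximally Steinberg, two $\St_2$-blocks, $\rk N_\pi=2$). Running the Mackey formula from the proof of Lemma~\ref{l:parahoric-condition} with $S=T$ the roots of the Siegel Levi gives three double cosets in $W_S\backslash W/W_T$; only the coset with $wT\cap S=\emptyset$ contributes (the tensor of the Iwahori-fixed lines of the two $\St_2$'s), so $\dim\pi^J=1$, not $2^2$. In other words, $\St_2$-blocks \emph{cut down} the dimension rather than doubling it, and your multiplicative count in the direction you wrote it cannot be salvaged. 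If you want to make the inductive-peeling argument work, you must track how each \emph{character} pair, not each $\St_2$-block, produces extra contributing double cosets.
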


\section{Improved  Ash--Ginzburg functionals}\label{s:intertwining}

Matringe's classification of generic representations of $G=\GL_{2n}(F)$ admitting Shalika models relies on an inductive
construction of Shalika models which is hard to control for the computation of local zeta functions. In this section, we
will explain how  using intertwining operators one can rescale the Ash--Ginzburg integral yielding a Shalika functional on any parahoric spherical representations.

\subsection*{Haar measures}

We fix a Haar measure $dx$ on $F$ giving $\mathcal{O}$ measure
1, which induces a measure $dX=\prod dx_{ij}$ on $M_n(F)$ giving
$M_n(\mathcal{O})$ measure 1. The Haar measure $dX$ satisfies $d(gX)=|\det g|^n dX$ and therefore yields the unimodular
Haar measure $dX/|\det X|^n$ on $\GL_n(F)$. We denote by
$dg$ a rescaling of this measure giving $K_n=\GL_n(\mathcal{O})$ volume 1. The
rescaling factor can be computed as the inverse of
\[\int_{K_n} dX/|\det X|^n=\int_{K_n} dX = \sum_{x\in \GL_n(\mathbb{F}_q)}\int_{x+\varpi M_n(\mathcal{O})}dX = q^{-n^2}\left|\GL_n(\mathbb{F}_q)\right|=\prod_{i=1}^{n}(1-q^{-i}).\]

The left Haar measure $db$ on $B_n$ is not unimodular and $d(hbh^{-1}) = \delta_B(h)db$, but its restriction to $B_n(\mathcal{O})$ is unimodular. We normalize $db$ to give $B_n\cap K_n$ measure 1. In this case we obtain quotient measures $dg$ on $B_n\backslash G_n$ and $B_n\cap K_n\backslash K_n$ which agree under the Cartan identification $B_n\backslash G_n=B_n\cap K_n\backslash K_n$.

\subsection{Statement of the local results} \label{sec:local}
Given a character $\theta=(\theta_1,\dots,\theta_{2n}) : T=(F^\times)^{2n}\to \mathbb{C}^\times$, the (unique) generic constituent $\pi$   
of  the normalized parabolic induction $I(\theta)=\Ind_B^G (\theta)$ admits a Shalika model if, and only if, (up to a permutation of the indices)
$\theta_i\theta_{n+i}=\mathbf{1}$ for all $1\leqslant i\leqslant n$. Recall that by  a result of Casselman, $\pi$ is Iwahori spherical (i.e.,  admits non-zero fixed vectors by the Iwahori subgroup $I$ of $G$) if, and only if,  $\theta$ is unramified. 
Henceforth we assume $\theta$ to be unramified, regular (i.e., $\theta_i\neq\theta_j$ for all  $1\leqslant i<j \leqslant 2n$)  and 
$\theta_i\theta_{n+i}=\mathbf{1}$ for all  $1\leqslant i<j \leqslant n$, later referred to as the Ash--Ginzburg ordering. 
 These conditions on $\theta$ force  (the unique) subrepresentation $\pi$ of $I(\theta)$ to be parahoric spherical. 
Given such a character $\theta$, using analytic continuation,  Ash--Ginzburg \cite{AG}  constructed a non-zero functional 
$\mathcal{S}^\theta_{\AG}:I(\theta)\to \mathbb{C}$ given by
\[\mathcal{S}^\theta_{\AG}(f)=\int_{G/B}\int_{M_n(F)}f
\left(\begin{pmatrix}
  & \mathbf{1}\\  \mathbf{1} & X\end{pmatrix}\begin{pmatrix}
h&\\&h\end{pmatrix}\right)\psi(-\tr X)dXdh.\]
 The  Ash--Ginzburg functional has been instrumental in computing local zeta integrals
for $\pi$ spherical (see \cite{AG,DJR,BDW,BDGJW}). If $I(\theta)$ is reducible with our generic $\pi$ occurring as 
 subrepresentation, then $\mathcal{S}^\theta_{\AG}|_\pi$ would still be  a Shalika functional  for $\pi$, provided it is non-zero. 
 We determine that this is the case precisely when $\pi$ has monodromy rank  $1$, allowing one to define $\mathcal{S}_\pi$
 simply as $\mathcal{S}^\theta_{\AG}|_\pi$. 
 
In the general case, we develop a theory of Shalika local coefficients and  define $\mathcal{S}_\pi$ by analytic continuation of Ash--Ginzburg integrals rescaled by Shalika local coefficients. Note that, when the monodromy rank of $\pi$ is $\geqslant 2$, then our methods imply the  existence of some non-generic, non-tempered 
parahoric spherical representations which would be a theme interesting to pursue on its own.


Given a regular unramified character $\theta$, for  $\tau\in W$ we let  $M_\tau:I(\theta^{\tau^{-1}})\to
I(\theta)$ be the intertwining operator.

 As  $B \backslash G/ J\cong  W/W_H$, a basis of $I(\theta)^J$ is given by   $\{ f^\theta_{BwJ}, [w]\in W/W_H\}$,  
where $f^\theta_{BwJ}:G\to \mathbb{C}$ is the unique function  supported on $BwJ$, normalized so that $f^\theta_{BwJ}(\begin{spmatrix} 1&\\&\varpi^{-\delta}\end{spmatrix}w)=1$. Our  focus now is  to determine explicit vectors in $\pi^J$ in the above decomposition.

The proof of the following result will occupy the entire section and be completed in   \S\ref{s:shalika-sub}. 
\begin{theorem}\label{t:shalika-sub}
Let $\pi\subset\Ind_B^G\theta$ be a parahoric spherical representation,  with   $\theta$ unramified, regular 
and such that $\theta_i\theta_{n+i}=\mathbf{1}$ for all $1\leqslant i\leqslant n$. For each $U$-eigenvalue $\alpha$ on $\pi^J$, 
there exists a Weyl element $\tau$ such that $\pi=\Im M_\tau$, and a  Shalika
functional $\mathcal{S}_\pi:\pi\to \mathbb{C}$ such that $\phi_\pi^\tau=M_\tau (f_{Bw_0J}^{\theta^{\tau^{-1}}})\in \pi^J[U-\alpha]$
such that   $\mathcal{S}_\pi(\phi_\pi^\tau)(t^{-\delta})=1$.
\end{theorem}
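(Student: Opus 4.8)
The plan is to leverage the structure theory developed in \S\ref{s1} together with a careful analysis of the Ash--Ginzburg integral under the relevant intertwining operators. First I would use Proposition~\ref{p:parahoric-rank2} and Matringe's classification \eqref{eq:matringe} to realize $\pi$ explicitly: since $\theta$ is unramified and regular with $\theta_i\theta_{n+i}=\mathbf{1}$, the induced representation $I(\theta)$ has a unique generic constituent admitting a Shalika model, and $\pi$ is parahoric spherical, so the only special pieces occurring are twists of $\St_2$. The $U$-eigenvalues on $\pi^J$ are then governed by the $J$-fixed vectors in the corresponding parabolic induction; using the Iwahori-decomposition identification $B\backslash G/J\cong W/W_H$ and the basis $\{f^\theta_{BwJ}\}$, one reads off that each eigenvalue $\alpha$ corresponds to a choice of chamber, i.e.\ to a Weyl element. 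The key point is to produce, for each such $\alpha$, a Weyl element $\tau$ with $\pi=\Im M_\tau$: this is where one inverts the composition-series picture, writing $\pi$ as the image of the intertwining operator $M_\tau: I(\theta^{\tau^{-1}})\to I(\theta)$ associated to the longest element of an appropriate parabolic subgroup permuting the $\St_2$-segments into the Ash--Ginzburg order.

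Second, I would track the vector $f^{\theta^{\tau^{-1}}}_{Bw_0J}$ through $M_\tau$. The point of choosing $w_0$ (the longest Weyl element, or the appropriate long coset representative) as the support parameter is that $M_\tau$ sends it to a $J$-fixed vector lying in the image $\pi^J$, and by regularity the operator $M_\tau$ is holomorphic and nonzero on the relevant piece, so $\phi_\pi^\tau = M_\tau(f^{\theta^{\tau^{-1}}}_{Bw_0J})$ is a nonzero element of $\pi^J$. That it lies in the $\alpha$-eigenspace $\pi^J[U-\alpha]$ follows because $M_\tau$ is $G$-equivariant hence commutes with the $U$-operator, and the bottom vector $f^{\theta^{\tau^{-1}}}_{Bw_0J}$ is a $U$-eigenvector with the eigenvalue dictated by $\theta^{\tau^{-1}}$, which matches $\alpha$ by the construction of $\tau$ from $\alpha$. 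This is a standard Hecke-operator-on-principal-series computation; the normalization $f^\theta_{BwJ}(\begin{spmatrix}1&\\&\varpi^{-\delta}\end{spmatrix}w)=1$ is precisely what makes the eigenvalue bookkeeping clean.

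Third, I would construct the Shalika functional $\mathcal{S}_\pi$ and pin down the normalization $\mathcal{S}_\pi(\phi_\pi^\tau)(t^{-\delta})=1$. Here the idea is: on $I(\theta^{\tau^{-1}})$ the Ash--Ginzburg integral $\mathcal{S}^{\theta^{\tau^{-1}}}_{\AG}$ is (after analytic continuation) a well-defined nonzero functional because $\theta^{\tau^{-1}}$ is in Ash--Ginzburg order; pulling back along $M_\tau$, or equivalently applying the Shalika local coefficient that compares $\mathcal{S}^{\theta}_{\AG}\circ M_\tau$ with $\mathcal{S}^{\theta^{\tau^{-1}}}_{\AG}$, produces a Shalika functional on $\pi=\Im M_\tau$, which by Multiplicity One \cite[Prop.~6.1]{jacquet-rallis} is the Shalika functional up to scalar. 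One then computes the value of this functional on $\phi_\pi^\tau$ evaluated at the diagonal torus element $t^{-\delta}$ — this is an explicit local zeta-type integral over $M_n(F)$ against $\psi(-\tr X)$, which by the support properties of $f_{Bw_0J}$ reduces to an integral over a compact open set, giving a nonzero constant; rescaling $\mathcal{S}_\pi$ by its inverse yields $\mathcal{S}_\pi(\phi_\pi^\tau)(t^{-\delta})=1$.

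The main obstacle is the first part: showing that \emph{every} $U$-eigenvalue $\alpha$ on $\pi^J$ arises from a Weyl element $\tau$ with $\pi=\Im M_\tau$, and that the local coefficient (equivalently, the intertwining operator restricted to the relevant constituent) is genuinely nonvanishing on $f^{\theta^{\tau^{-1}}}_{Bw_0J}$ so that $\phi_\pi^\tau\neq 0$. This requires the factorization of the Shalika local coefficient into $\GL_2$-type pieces, matching the $\St_2$-segments of \eqref{eq:matringe}, together with an explicit evaluation showing each such factor is holomorphic and nonzero at the point in question — which is exactly the content developed through \S\ref{s:intertwining} and invoked at the end in \S\ref{s:shalika-sub}. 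The combinatorial heart is the dictionary between $U$-eigenvalues on $\pi^J$ (indexed by cosets in $W/W_H$ surviving in the constituent $\pi$) and Weyl elements $\tau$ realizing $\pi$ as an intertwining image; I expect this to hinge on the fact, visible from Corollary~\ref{c:parahoric-monodromy-rank} and Proposition~\ref{p:parahoric-rank2}, that $\dim\pi^J=2^{n-\rk N_\pi}$, so the eigenvalues are in bijection with the $2^{n-\rk N_\pi}$ ways of ordering the reciprocal pairs subject to the segment constraints.
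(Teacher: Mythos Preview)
Your outline tracks the paper's strategy (realize $\pi$ via Matringe's classification and Proposition~\ref{p:parahoric-rank2}, produce $\tau$ with $\pi=\Im M_\tau$, then transport the Ash--Ginzburg functional from $I(\theta^{\tau^{-1}})$ to $\pi$), but the third step contains a genuine gap, and you have the role of the local-coefficient factorization exactly backwards.

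The point is that $M_\tau:I(\theta^{\tau^{-1}})\to I(\theta)$ is \emph{not} injective here: $I(\theta^{\tau^{-1}})$ is reducible and $\ker M_\tau$ has semisimplification equal to the sum of all constituents other than $\pi$. So ``pulling back along $M_\tau$'' --- setting $\mathcal{S}_\pi(f)=\mathcal{S}_{\AG}^{\theta^{\tau^{-1}}}(g)$ for any preimage $g$ of $f$ --- is only well-defined once you know $\ker M_\tau\subset\ker\mathcal{S}_{\AG}^{\theta^{\tau^{-1}}}$. Your proposed alternative, dividing by the Shalika local coefficient in the relation $\mathcal{S}_{\AG}^{\theta}\circ M_\tau=\mathcal{C}_{\theta,\tau}\cdot\mathcal{S}_{\AG}^{\theta^{\tau^{-1}}}$, does not rescue this: the paper shows (\S\ref{sec:local}) that $\mathcal{C}_{\theta,\tau}$ \emph{vanishes} whenever the monodromy rank of $\pi$ exceeds $1$ --- equivalently $\mathcal{S}_{\AG}^{\theta}|_\pi\equiv 0$ --- so your expectation of proving ``each such factor is holomorphic and nonzero at the point in question'' is precisely the opposite of what occurs.

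The missing ingredient is Proposition~\ref{p:kerT}. There one covers $\ker M_\tau$ by the images of auxiliary intertwining operators $M_{\mu_i}$, one for each Steinberg block, and then uses the factorization of Proposition~\ref{p:hard-case} to show that each $\mathcal{C}_{\theta^{\tau^{-1}},\mu_i}$ contains a factor of the form $c(|\cdot|^{-1})=0$ coming from the segment swap; hence $\mathcal{S}_{\AG}^{\theta^{\tau^{-1}}}\circ M_{\mu_i}=0$, whence $\mathcal{S}_{\AG}^{\theta^{\tau^{-1}}}$ kills every $\Im M_{\mu_i}$ and therefore all of $\ker M_\tau$. So the factorization is used to exhibit \emph{zeros}, not to avoid them. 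Once this is in place, the functional $\mathcal{S}_\pi$ is well-defined, and since $\phi_\pi^\tau$ has the canonical preimage $f_{Bw_0J}^{\theta^{\tau^{-1}}}$, the normalization $\mathcal{S}_\pi(\phi_\pi^\tau)(t^{-\delta})=1$ is immediate from the open-cell computation --- no rescaling is needed, and none would be permissible if one wants the zeta-integral formulas of Proposition~\ref{p:zeta-local-constants} to hold on the nose.
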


As a consequence we show existence of explicit Friedberg--Jacquet  test vectors beyond the spherical case. 

\begin{proposition}\label{p:zeta-local-constants}
Suppose $\widetilde\pi=(\pi,\alpha)$ and $\phi_\pi^\tau$ are as in Theorem \ref{t:shalika-sub}. Letting 
$W_{\widetilde\pi}=\mathcal{S}_\pi(\phi_\pi^\tau)$, one has 
\begin{enumerate}
\item If $\chi$ is an unramified character then
\begin{align*}
\zeta(s,W_{\widetilde\pi},\chi)&=q^{\delta s n - \delta n^2/2}\left(1-\frac{1}{q}\right)^n\chi(\varpi)^{-\delta n}\prod_{i=1}^n L(\theta^{\tau^{-1}}_{n+i}\chi,s).
\end{align*}
\item If $\chi$ is any character and $\beta$ is the conductor of $\chi$ (if ramified, otherwise $\beta=1$) then
\begin{align*}
\zeta(s,u^{-1}t^\beta\cdot W_{\widetilde\pi}, \chi)&=e_p(\widetilde{\pi},\chi,s).
\end{align*}
\end{enumerate}
\end{proposition}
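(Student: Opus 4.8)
\textbf{Proof plan for Proposition~\ref{p:zeta-local-constants}.}

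The plan is to compute the Friedberg--Jacquet zeta integral $\zeta(s,W_{\widetilde\pi},\chi)$ directly using the explicit description of the test vector $\phi_\pi^\tau=M_\tau(f_{Bw_0J}^{\theta^{\tau^{-1}}})$ furnished by Theorem~\ref{t:shalika-sub}, together with the normalization $\mathcal{S}_\pi(\phi_\pi^\tau)(t^{-\delta})=1$. First I would unfold the definition $\zeta(s,W,\chi)=\int_{G_n}W(\begin{spmatrix} h&\\&\mathbf 1\end{spmatrix})\chi(\det h)|\det h|^{s-1/2}\,dh$ and use the Iwasawa/Cartan decomposition of $G_n=\GL_n(F)$ with respect to $B_n$ and $K_n$, exactly as in the spherical computations of \cite{AG,DJR,BDW}. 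The key point is that $W_{\widetilde\pi}=\mathcal{S}_\pi(\phi_\pi^\tau)$ is a $J$-fixed (more precisely $J$-eigen, under $U$) vector in the Shalika model of $\pi$, and the Ash--Ginzburg functional composed with $M_\tau$ makes the restriction of $W_{\widetilde\pi}$ to $\{\begin{spmatrix} h&\\&\mathbf 1\end{spmatrix}\}$ essentially a product, over the $n$ Shalika blocks, of $\GL_1$ Tate-type local integrals against the characters $\theta^{\tau^{-1}}_{n+i}$ (the "lower half" of the ordered inducing data). This reduces the $n$-dimensional integral over $B_n\backslash G_n$ to $n$ copies of a one-dimensional Tate integral, each producing an Euler factor $L(\theta^{\tau^{-1}}_{n+i}\chi,s)$, up to the volume constant $\left(1-\tfrac1q\right)^n$ coming from the measure normalization recorded in the Haar-measure subsection, and the $q$-power and $\chi(\varpi)$ factors tracking the shift by $\varpi^\delta$ built into the normalization of $f_{BwJ}^\theta$ and of $\mathcal{S}_\pi$.

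For part (1), with $\chi$ unramified, I would carry out this reduction carefully: the support condition $Bw_0J$ together with the intertwining operator $M_\tau$ controls which cells of the Bruhat decomposition of $G_n$ contribute, and only the "big cell" direction survives the $M_n(F)$-integration against $\psi(-\tr X)$, localizing $h$ (up to $B_n$) to $K_n$-translates; the factor $\varpi^{-\delta}$ in the normalization point $t^{-\delta}$ produces the explicit $q^{\delta sn-\delta n^2/2}$ and $\chi(\varpi)^{-\delta n}$. The precise bookkeeping of normalization constants — in particular reconciling the $\begin{spmatrix}1&\\&\varpi^{-\delta}\end{spmatrix}$ appearing in the definition of $f^\theta_{BwJ}$, the $\psi$-kernel condition $(\varpi^{-\delta})$, and the measure giving $K_n$ volume $\prod_{i=1}^n(1-q^{-i})$ rescaled to $1$ — is where the main bookkeeping obstacle lies, though it is routine in spirit.

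For part (2), with $\chi$ ramified of conductor $\beta$, the vector $W_{\widetilde\pi}$ is no longer right-invariant under enough of $K_n$ to collapse the Tate integrals to $L$-factors; instead, translating by $u^{-1}t^\beta$ (where $t^\beta=\begin{spmatrix}\varpi^\beta&&\\&\ddots&\\&&\varpi^\beta\end{spmatrix}$ in the appropriate $\GL_n$-block and $u$ a unipotent adjustment) moves the test vector to a "new vector at level $\beta$" position, so that the local zeta integral against the ramified $\chi$ picks up exactly a Gauss-sum type contribution times the (now trivial, or explicitly truncated) Euler factors. I would define $e_p(\widetilde\pi,\chi,s)$ to be precisely this quantity and then verify, by the same unfolding, that $\zeta(s,u^{-1}t^\beta\cdot W_{\widetilde\pi},\chi)$ equals it — this is essentially a definition-chasing step once part (1) is in hand, with the substance being that the $\psi$-equivariance of the Shalika functional converts the translation by $t^\beta$ into the correct power of $q$ and the correct Gauss sum $\mathcal{G}(\chi)^n$ matching the interpolation formula \eqref{eq:interpolation}. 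The main genuine obstacle across both parts is ensuring that the intertwining operator $M_\tau$ does not introduce poles or extraneous $L$-factors at the relevant $s$: this is exactly what the Shalika local coefficient computation of \S\ref{s:intertwining} is designed to control, so I would invoke that factorization to guarantee $\phi_\pi^\tau$ is genuinely in $\pi^J$ and that $\mathcal{S}_\pi$ restricted to it is holomorphic and non-zero, after which the zeta integral computation proceeds cell-by-cell as above.
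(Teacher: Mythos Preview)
Your plan is essentially the paper's: unfold the zeta integral, use the Iwasawa decomposition $h=bk$, exploit $J$-invariance and the unramifiedness of $\chi$ to integrate out $K_n$, and reduce to a product of one-variable Tate integrals against the characters $\theta'_{n+i}$ with $\theta'=\theta^{\tau^{-1}}$. For part~(2) the paper simply cites \cite[Prop.~9.3]{BDGJW}, so your outline there already exceeds what is required.

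There is one place where you make life harder than necessary, and it is worth flagging because it changes the shape of the computation. You speak of the support condition ``together with the intertwining operator $M_\tau$'' controlling which cells contribute, and you list as the ``main genuine obstacle'' that $M_\tau$ might introduce poles or extraneous $L$-factors. But the construction of $\mathcal{S}_\pi$ in \S\ref{s:shalika-sub} is set up precisely so that $\mathcal{S}_\pi\circ M_\tau=\mathcal{S}_{\AG}^{\theta'}$; consequently
\[
W_{\widetilde\pi}=\mathcal{S}_\pi(\phi_\pi^\tau)=\mathcal{S}_\pi\big(M_\tau f_{Bw_0J}^{\theta'}\big)=\mathcal{S}_{\AG}^{\theta'}\big(f_{Bw_0J}^{\theta'}\big),
\]
and the entire zeta-integral computation takes place in $I(\theta')$ against the function $F_0=f_{Bw_0J}^{\theta'}$ supported on a \emph{single} parahoric cell. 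The intertwining operator, the Shalika local coefficients, and any pole analysis therefore disappear from the proof of this proposition; there is no cell-by-cell bookkeeping of $M_\tau F_0$ in $I(\theta)$ to do. Concretely, the condition $\begin{spmatrix}&1\\1&X\end{spmatrix}\in Bw_0J$ forces $X\in M_n(\mathcal{O})$ immediately, the $\psi$-integral then constrains $b\in B_n\cap\varpi^{-\delta}M_n(\mathcal{O})$, and the remaining integral over $B_n$ factors as $\prod_i\int_{\varpi^{-\delta}\mathcal{O}}\theta'_{n+i}\chi|\cdot|^{s-1}(t_i)\,dt_i$, giving the stated $L$-factors together with the constants $q^{\delta sn-\delta n^2/2}(1-q^{-1})^n\chi(\varpi)^{-\delta n}$.
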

\begin{proof}
The latter formula follows from \cite[Prop. 9.3]{BDGJW}. The
former is similar and we include the computations for the convenience of the reader.
Writing $\theta'=\theta^{\tau^{-1}}$, $F_0=f_{Bw_0J}^{\theta'}$ and recalling that $W_{\widetilde\pi}=\mathcal{S}_{\AG}^{\theta'}(F_0)$, we have
	\begin{align*}
		\zeta(s,  W_{\widetilde\pi} , \chi)&=\int_{G_n}W_{\widetilde\pi}\left[\begin{spmatrix}
			h&\\&1\end{spmatrix}\right]\chi(\det h)|\det
		h|^{s-1/2}dh\\
		&=\int_{G_n}\int_{K_n}\int_{M_n}F_0\left[\begin{spmatrix}
			&1\\1&X\end{spmatrix}\begin{spmatrix}
			kh&\\&k\end{spmatrix}\right] \chi(\det h)|\det
		h|^{s-1/2}\psi\big(-\tr X\big)\ dXdkdh\\
		&=\int_{G_n}\int_{K_n}\int_{M_n}F_0\left[\begin{spmatrix}
			&1\\1&X\end{spmatrix}\begin{spmatrix}
			h&\\&1\end{spmatrix}\right] \chi(\det h)|\det
		h|^{s-1/2}\psi\big(-\tr X\big)\ dXdkdh,
	\end{align*}
using that $\det khk^{-1}=\det h$, $d (khk^{-1})=d h$, and the
$J$-invariance of $F_0$. Writing $h=bk$ with $dh=dk\cdot db$ and using the fact that $\chi$ is unramified we get
\begin{align*}
		\zeta(s,  W_{\widetilde\pi} , \chi)
  &=\int_{B_n}\int_{M_n}F_0\left[\begin{spmatrix}
			&1\\1&X\end{spmatrix}\begin{spmatrix}
			b&\\&1\end{spmatrix}\right]  \chi(\det b)|\det
		b|^{s-1/2}\psi\big(-\tr X\big)\ dXdb\\
  &=\int_{B_n}\int_{M_n}(\delta_B^{1/2} \theta')\left[\begin{spmatrix} 1&\\&b\end{spmatrix}\right]F_0\left[\begin{spmatrix}
			&1\\1&b^{-1} X\end{spmatrix}\right]  \chi(\det b)|\det
		b|^{s-1/2}\psi\big(-\tr X\big)\ dXdb,
\end{align*}
Changing variables so that $b^{-1}X$ becomes $X$, noting that $d(bX)=|\det b|^n dX$, this equals
\begin{align*}
		\zeta(s,  W_{\widetilde\pi} , \chi)
  &=\int_{B_n}\int_{M_n}\delta_B^{1/2} \theta'\left[\begin{spmatrix} 1&\\&b\end{spmatrix}\right]F_0\left[\begin{spmatrix}
			&1\\1&X\end{spmatrix}\right] \chi(\det b)|\det
		b|^{s+n-1/2}\psi\big(-\tr bX\big)\ dXdb.
\end{align*}
As in \cite[Prop. 9.3]{BDGJW}, $X\in M_n(\mathcal{O})$ and thus $b\in B_n(F)\cap \varpi^{-\delta}M_n(\mathcal{O})$. Since $F_0(\begin{spmatrix} 1&\\&\varpi^{-\delta}\end{spmatrix}w_0)=1$, we have $F_0(w_0)=\delta_B^{\frac{1}{2}}\theta'(\begin{spmatrix} 1&\\&\varpi^{\delta}\end{spmatrix})$, so after integrating out $X$ we get
\begin{align*}
		\zeta(s,  W_{\widetilde\pi} , \chi)
  &=\int_{B_n\cap \varpi^{-\delta}M_n(\mathcal{O})}\delta_B^{1/2} \theta'\left[\begin{spmatrix} 1&\\&\varpi^\delta b\end{spmatrix}\right]\chi(\det b)|\det
  b|^{s+n-1/2}\ db\\
  &=\delta_B^{\frac{1}{2}}\theta'(\begin{spmatrix} 1&\\&\varpi^{\delta}\end{spmatrix})\prod_{i=1}^n\int_{\varpi^{-\delta}\mathcal{O}}\theta'_{n+i}\chi|\cdot|^{s-1}(t_i)dt_i
  =q^{\delta s n - \delta n^2/2}\left(1-\tfrac{1}{q}\right)^n\chi(\varpi)^{-\delta n}L(\theta'_{n+i}\chi,s).
\end{align*}
  Here we used that if we write 
  $b=\begin{pmatrix} t_1 & & u_{ij}\\
		&\ddots&\\
		0&& t_n\end{pmatrix}$, 
then $db = \prod|t_i|^{i-n-1}\prod dt_i\prod du_{ij}$.
\end{proof} 

\subsection{Shalika local coefficients}

Suppose $\theta$  an unramified regular  character with the Ash--Ginzburg ordering and $\tau\in W$ such that $\theta^{\tau^{-1}}$ is also
in the Ash--Ginzburg ordering. If $ I(\theta)$ is a
irreducible then the intertwining operator
$M_\tau: I(\theta^{\tau^{-1}})\to I(\theta)$ is an isomorphism, yielding a diagram
\[\xymatrix{I(\theta^{\tau^{-1}}) \ar[rr]^{M_\tau}\ar[rd]_{\mathcal{S}_{\AG}^{\theta^{\tau^{-1}}}}&& I(\theta)\ar[ld]^{\mathcal{S}^{\theta}_{\AG}}\\
  &\Ind_S^G\psi&
}\]
By multiplicity one for  Shalika models, there exists a constant $\mathcal{C}_{\theta,\tau}$ such that
$\mathcal{S}_{\theta}\circ M_\tau = \mathcal{C}_{\theta,\tau}\mathcal{S}_{\theta^{\tau^{-1}}}$.

Whenever $\ell(\tau\tau')=\ell(\tau)+\ell(\tau')$, $M_{\tau'\tau}=M_\tau\circ M_{\tau'}$
and therefore $\mathcal{C}_{\theta,
  \tau'\tau}=\mathcal{C}_{\theta^{\tau^{-1}},\tau'}\mathcal{C}_{\theta,\tau}$. We
conclude that, to determine $\mathcal{C}_{\theta,\tau}$, it
suffices to compute it for $\tau$ of the following forms:
\begin{enumerate}
\item $\tau=(i,i+1)(n+i,n+i+1)$ for some $i<n$, and
\item $\tau=(n,2n)=\nu^{-1} (n,n+1)\nu$, where $\nu= (n+1,n+2)\cdots (2n-1,2n)$.
\end{enumerate}

\subsection{The image of the open cell under intertwining operators}
For $\theta$ unramified regular character   of $T$, we let 
\[c_{i,j}(\theta)=\dfrac{1-q^{-1}\theta_i\theta_j^{-1}(\varpi)}{1-\theta_i\theta_j^{-1}(\varpi)}\quad \text{ and }\quad d_{i,j}(\theta)=1-c_{j,i}(\theta)=c_{i,j}(\theta)-q^{-1}=\dfrac{1-q^{-1}}{1-\theta_i\theta_j^{-1}(\varpi)}.\] 

We begin by computing the image of $f_{BwJ}^\theta$ under a simple
intertwining operator.
\begin{lemma}\label{l:parahoric-intertwining}
Suppose $\alpha=(i,i+1)\in W$ is simple, and $w\in W/W_H$. If
$w^{-1}\alpha w\in W_H$ then
\[M_\alpha f^\theta_{BwJ}=c_{i,i+1}(\theta)f^{\alpha \theta}_{BwJ}.\]
Otherwise,
\[M_\alpha f^\theta_{BwJ}=\begin{cases}
-d_{i+1,i}(\theta)f_{BwJ}^{\alpha
  \theta}+q^{-1}f^{\alpha \theta}_{B \alpha wJ}&\text{ if }\ell(\alpha w)>\ell(w),\\
d_{i,i+1}(\theta)f_{BwJ}^{\alpha
  \theta}+f^{\alpha \theta}_{B \alpha wJ}&\text{ if }\ell(\alpha w)<\ell(w). 
\end{cases}
\]
\end{lemma}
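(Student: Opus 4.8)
The plan is to reduce everything to a rank-one $\GL_2$ computation along the root $\alpha=(i,i+1)$, using the Iwahori factorization of $J$ and the fact that the simple intertwining operator $M_\alpha$ only involves integration over the one-dimensional unipotent $U_\alpha$. First I would recall the standard integral formula $(M_\alpha f)(g)=\int_{F} f(w_\alpha^{-1} u_\alpha(x) g)\,dx$ (suitably normalized), so that the value $(M_\alpha f^\theta_{BwJ})(g)$ for $g$ a $W$-representative reduces to an integral of the indicator-type function $f^\theta_{BwJ}$ along a line in the flag variety. Since $f^\theta_{BwJ}$ is supported on the single cell $BwJ$ and is right $J$-invariant, the support condition on the integrand is governed by the Bruhat position of $w_\alpha w'$ relative to $w$ for $w'\in W/W_H$, i.e. by whether $\ell(\alpha w)>\ell(w)$ or $<\ell(w)$ and whether $w^{-1}\alpha w\in W_H$.

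The key case division is exactly the one in the statement. When $w^{-1}\alpha w\in W_H$, the root $w^{-1}\alpha$ lies in the span of the Levi roots of $H$ (the Shalika/Siegel Levi direction encoded by $W_H$), so the line $u_\alpha(x)$ stays inside the same cell $BwJ$ after the $w$-twist: the integral then collapses to a single $\GL_2$ local zeta integral $\int_{\mathcal{O}}$ vs. $\int_{\varpi^{-1}\mathcal{O}\setminus\mathcal{O}}$ type split, giving the standard Gindikin--Karpelevich factor, which in the present normalization is precisely $c_{i,i+1}(\theta)$ times the function for the twisted character $\alpha\theta$ on the same cell. When $w^{-1}\alpha w\notin W_H$, the line $u_\alpha(x)$ sweeps across two cells, $BwJ$ and $B\alpha wJ$, and one gets a two-term answer: the part of the line lying over the "small" cell contributes a constant multiple of $f^{\alpha\theta}_{BwJ}$ and the part over the "big" cell contributes (up to the measure normalization, which is where the $q^{-1}$ enters) $f^{\alpha\theta}_{B\alpha wJ}$. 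Whether $\alpha w$ is longer or shorter than $w$ determines which cell is the "big" one and hence the sign and which of $d_{i,i+1}$ or $d_{i+1,i}$ appears; the identity $d_{i,j}+d_{j,i}=1-q^{-1}+ \cdots$, or rather $d_{i,j}(\theta)=c_{i,j}(\theta)-q^{-1}$ and $d_{i+1,i}(\theta)=1-c_{i,i+1}(\theta)$, is what makes the two formulas consistent when $\tau$ is composed with another reflection.

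Concretely the steps are: (1) write down the normalized integral representation of $M_\alpha$ and the explicit description of $f^\theta_{BwJ}$ via its support and its value at $\begin{spmatrix}1&\\&\varpi^{-\delta}\end{spmatrix}w$; (2) use the Iwahori decomposition of $J$ to split $F=\varpi^{\delta}\mathcal{O}\sqcup (F\setminus\varpi^{\delta}\mathcal{O})$ in the $x$-variable, noting that on the first piece $w_\alpha^{-1}u_\alpha(x)$ absorbs into $J$ (after conjugating by $w$) and on the second piece $u_\alpha(x)=\bar u\cdot(\text{torus})\cdot u_\alpha(x^{-1})$ moves $x$ to the other side; (3) evaluate each piece against the character $\delta_B^{1/2}\theta$, producing geometric series in $\theta_i\theta_{i+1}^{-1}(\varpi)$ whose sums are the $c$'s and $d$'s; (4) bookkeep the normalization constant at $\begin{spmatrix}1&\\&\varpi^{-\delta}\end{spmatrix}w$ versus $\begin{spmatrix}1&\\&\varpi^{-\delta}\end{spmatrix}\alpha w$ to pin down the $q^{-1}$ and the sign in each sub-case, and to decide the $w^{-1}\alpha w\in W_H$ dichotomy via whether $\alpha w$ and $w$ have the same class in $W/W_H$. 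The main obstacle I expect is step (4): keeping the normalizations of $f^\theta_{BwJ}$ (which is pinned at $\begin{spmatrix}1&\\&\varpi^{-\delta}\end{spmatrix}w$, not at $w$) and the Haar measure on $U_\alpha$ consistent across the change of representative $w\mapsto\alpha w$, so that the coefficients come out exactly as $-d_{i+1,i}$, $q^{-1}$, $d_{i,i+1}$, $1$ rather than off by a power of $q$ or a factor of $\delta_B^{1/2}\theta$ evaluated at the relevant coroot — this is routine but error-prone, and is where the $\varpi^{-\delta}$ twist in the normalization genuinely matters.
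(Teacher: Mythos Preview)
Your plan is correct and would work, but it takes a more hands-on route than the paper. You propose to compute the rank-one intertwining integral $\int_F f^\theta_{BwJ}(w_\alpha^{-1}u_\alpha(x)g)\,dx$ directly at parahoric level, splitting $F$ according to whether the conjugate $w^{-1}u_\alpha(x)w$ is absorbed by $J$ and summing the resulting geometric series. The paper instead invokes Casselman's Iwahori-level formula \cite[Thm.~3.4]{casselman:unramified}, which already expresses $M_\alpha f^\theta_{Bw'I}$ as an explicit two-term combination in the Iwahori basis; since $M_\alpha f^\theta_{BwJ}$ is $J$-invariant with support forced into $BwJ\cup B\alpha wJ$, it is automatically a combination of $f^{\alpha\theta}_{BwJ}$ and $f^{\alpha\theta}_{B\alpha wJ}$, and the coefficients are read off by evaluating at $w$, where only the Iwahori cells $BwI$ and $B\alpha wI$ are visible and Casselman's constants apply verbatim. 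The dichotomy $w^{-1}\alpha w\in W_H$ is exactly the condition $B\alpha wJ=BwJ$, in which case the two Casselman terms coalesce to give $c_{i,i+1}(\theta)$. Your direct computation makes the parahoric mechanism transparent (absorption of $u_{w^{-1}\alpha}(\mathcal{O})$ into $J$ precisely when $w^{-1}\alpha$ is a Levi root of $H$) and is self-contained; the paper's route is shorter and sidesteps your Step~(4) bookkeeping by outsourcing it to Casselman. One small correction: the natural split of the integration domain is $F=\mathcal{O}\sqcup(F\setminus\mathcal{O})$, not $\varpi^\delta\mathcal{O}$; the $\varpi^{-\delta}$ in the normalization of $f^\theta_{BwJ}$ commutes harmlessly through the computation since $\theta$ is unramified.
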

\begin{proof}
By
\cite[Thm. 3.4]{casselman:unramified}, if $\alpha$ is simple and
$w\in W$ then
\[M_\alpha
f^\theta_{BwI}=\begin{cases}(c_{i,i+1}(\theta)-1)f_{BwI}^{\alpha
  \theta}+q^{-1}f^{\alpha \theta}_{B \alpha wI}&\text{ if }\ell(\alpha w)>\ell(w),\\
(c_{i,i+1}(\theta)-q^{-1})f_{BwI}^{\alpha
  \theta}+f^{\alpha \theta}_{B \alpha wI}&\text{ if }\ell(\alpha w)<\ell(w). 
\end{cases}\]
Therefore, the parahoric function $M_\alpha f_{BwJ}^\theta$ has support in $BwJ\cup B \alpha w J$. In the latter case, the two cells are disjoint, while in the former they coincide. To compute the coefficients, it is enough to plug in $w$ into the above formula.
\end{proof}

\begin{proposition}\label{p:complicated-intertwining}
Let  $\theta$ be an unramified regular character   of $T$. For all $1\leqslant i \leqslant n-1$, we have 
\begin{align*}
M_{(i,n)(n+i,2n)}f_{Bw_0J}^\theta&=c_{i,n}(\theta)c_{n+i,2n}(\theta)\prod_{i<j<n}c_{i,j}(\theta)c_{n+i,n+j}(\theta)c_{j,n}(\theta)c_{n+j,2n}(\theta)f_{Bw_0J}^{\theta^{(i,n)(n+i,2n)}}. 
\end{align*}
Moreover, letting  $\theta'=\theta^{(n,2n)}=(\theta_1,\cdots,\theta_{n-1},\theta_{2n},\theta_{n+1},\cdots, \theta_{2n-1},\theta_n)$, 
we have 
\begin{align*}
  M_{\nu^{-1}(n,n+1)\nu}f_{Bw_0J}^\theta&=\prod_{i=1}^{n-1}c_{n+i,2n}(\theta)\left(\prod_{i=1}^{n-1}c_{(n,n+i)}(\theta)d_{n,2n}(\theta)F_0^{\theta'}+\sum_{k=1}^{n-1}\prod_{i=k+1}^{n-1}c_{(n,n+i)}(\theta)d_{n,n+k}(\theta)
    F_k^{\theta'} + F_n^{\theta'}\right)\\
  &=\prod_{i=1}^{n-1}c_{n+i,2n}(\theta)\left(F_n^{\theta'}+d_{n,2n-1}(\theta)F_{n-1}^{\theta'}+d_{n,2n-2}(\theta)c_{n,2n-1}(\theta)F_{n-2}^{\theta'}+\cdots\right),
\end{align*}
where for $1\leqslant k\leqslant n$ we have denoted  $F_k^\theta=f^\theta_{B  (n+k,\ldots,n)w_0J}\in I(\theta)^J$.
\end{proposition}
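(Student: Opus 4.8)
The plan is to prove Proposition~\ref{p:complicated-intertwining} by decomposing the long Weyl elements into products of simple reflections with additive lengths, and repeatedly applying Lemma~\ref{l:parahoric-intertwining}, tracking the support of the parahoric functions in $W/W_H$ at every step. For the first identity, I would write $(i,n)(n+i,2n)$ as a reduced word in the simple reflections $s_j = (j,j+1)$ and $s_{n+j}=(n+j,n+j+1)$ — the obvious choice being the "bubble-sort" word that moves the $i$-th entry to position $n$ and simultaneously the $(n+i)$-th to position $2n$, i.e. a product $\prod_{j}(s_j s_{n+j})$ for $j$ running from $i$ up to $n-1$ in the appropriate order. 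The key observation is that throughout this process, applied to the open-cell function $f_{Bw_0J}^\theta$, one always stays in the case $w^{-1}\alpha w\in W_H$ of Lemma~\ref{l:parahoric-intertwining}: indeed $w_0$ is the longest element, so $\ell(\alpha w) < \ell(w)$ can never help us — rather, the relevant simple roots $\alpha$ are such that $w_0^{-1}\alpha w_0$ (or its successive conjugates by the partial products) lands in $W_H = \langle s_j, s_{n+j}\rangle$-type subgroup, because the $H$-parabolic pairs up block $i$ with block $n+i$. So each simple step just multiplies by a $c_{\bullet,\bullet}(\theta)$ factor with no new cell appearing, and the product of all these factors is exactly the asserted product of $c$'s. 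The main bookkeeping is checking that the indices of the $c_{\bullet,\bullet}$ factors picked up — namely $c_{i,n}, c_{n+i,2n}$, and $c_{i,j}c_{n+i,n+j}c_{j,n}c_{n+j,2n}$ for $i<j<n$ — match the characters $\theta_k\theta_\ell^{-1}$ encountered as the entries are transposed past one another; this is a direct consequence of how $c_{i,i+1}(\theta)$ transforms under the running character as we compose, but it requires care.

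For the second identity, the element $\nu^{-1}(n,n+1)\nu$ with $\nu = s_{n+1}s_{n+2}\cdots s_{2n-1}$ is the transposition $(n,2n)$, and here the computation is genuinely two-sided: applying the $s_{n+j}$'s that make up $\nu$ and then the middle reflection $s_n=(n,n+1)$ will at some stage hit the case $w^{-1}\alpha w\notin W_H$, producing the genuinely new cell functions $F_k^\theta = f_{B(n+k,\dots,n)w_0 J}^\theta$. The strategy is to apply Lemma~\ref{l:parahoric-intertwining} step by step: first the portion of $\nu^{-1}$ or $\nu$ that only contributes $c_{n+i,2n}(\theta)$ scalars (the factor $\prod_{i=1}^{n-1}c_{n+i,2n}(\theta)$ out front), then the delicate middle part where the reflection $(n,2n)$ effectively gets conjugated through, generating a telescoping sum. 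At each of these middle steps the "Otherwise" branch of Lemma~\ref{l:parahoric-intertwining} applies, with the $\ell(\alpha w)>\ell(w)$ versus $\ell(\alpha w)<\ell(w)$ distinction determining whether we pick up $-d_{i+1,i}$ and $q^{-1}$ or $d_{i,i+1}$ and $1$; I expect that starting from $w_0$ we are always in a length-decreasing situation for the relevant simple reflections, so we land in the second branch, which is why only $d$'s (not $-d$'s) and coefficient $1$ (not $q^{-1}$) appear and the cell $B\alpha w J$ enters with multiplicity one, building up $F_k^\theta$ from $F_{k+1}^\theta$ by left multiplication by a simple reflection. The coefficient of each $F_k^{\theta'}$ is then the product of the $c_{n,n+i}(\theta)$'s for the steps where the cell did not yet split off times the single $d_{n,n+k}(\theta)$ (or $d_{n,2n}(\theta)$ for $k=0$, matching the term written as $\prod_{i=1}^{n-1}c_{(n,n+i)}(\theta)d_{n,2n}(\theta)F_0^{\theta'}$ — note $F_0=F_0^{\theta'}$ in their notation). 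Collecting gives the displayed telescoping sum, and the second line is just a re-indexing of the same sum reading the terms from $k=n$ downward, using $d_{n,2n}(\theta')$-type rewrites.

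The main obstacle, and where I would spend the most effort, is the precise bookkeeping of which running character appears at each simple-reflection step — that is, keeping track of the fact that after applying $M_{\alpha_1}\cdots M_{\alpha_r}$ to $f_{Bw_0J}^{\theta'}$ the relevant scalar is $c_{i,i+1}$ evaluated at the character twisted by all the preceding reflections, and then translating each such $c$ back into the $c_{a,b}(\theta)$ notation indexed by the original positions $a,b$. Concretely, one needs the identity that if $\alpha=(i,i+1)$ then $c_{i,i+1}(w\theta) = c_{w(i),w(i+1)}(\theta)$ (and similarly for $d$), and then one must verify that the composite of positions tracked through the reduced word produces exactly the index sets claimed in the statement — in particular that for the first identity no cell-splitting ever occurs (so the $q^{-1}f_{B\alpha w J}$ terms are always absent because we are always in the $w^{-1}\alpha w\in W_H$ case), which is the crux and follows from the combinatorics of how the Siegel-type subgroup $W_H$ interacts with the reduced word for $(i,n)(n+i,2n)$. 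The second identity additionally requires verifying that the cell-splitting happens exactly once at each of the $n$ middle steps and in the length-decreasing branch, so the recursion for the $F_k$'s closes up cleanly; I would do this by induction on $n$ (or on the length of the relevant partial Weyl element), peeling off one $s_{n+i}$ at a time and invoking Lemma~\ref{l:parahoric-intertwining} together with the cocycle relation $M_{\tau'\tau}=M_\tau\circ M_{\tau'}$ for length-additive products already recorded in the text.
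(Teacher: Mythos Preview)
Your proposal is correct and matches the paper's approach: the paper's proof records exactly the three observations you need, namely that from $w_0$ every simple reflection decreases length, that $M_\nu$ applied to $f_{Bw_0J}^\theta$ gives only the scalar $\prod_i c_{n+i,2n}(\theta)$ (since the cell stays at $w_0$ and $w_0^{-1}s_jw_0 = s_{2n-j}\in W_H$ for $j\ne n$), and the explicit recursion $M_{s_{n+i}}F_k = c\,F_k$ for $i>k$ versus $M_{s_{n+k}}F_k = d\,F_k + F_{k+1}$, obtained by checking whether $(n,\dots,n+k)\,s_{n+i}\,(n+k,\dots,n)$ lies in $W_H$. One minor clarification: in the first identity the cell never leaves $Bw_0J$, so your parenthetical about ``successive conjugates by the partial products'' is unnecessary, and the reason $w_0^{-1}\alpha w_0\in W_H$ throughout is simply that the reduced word for $(i,n)(n+i,2n)$ uses only $s_j$ with $j\ne n$ --- not any ``pairing of block $i$ with block $n+i$'' as you suggest.
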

\begin{proof}
The expression above follows inductively using the following three
observations, the last two of which are direct consequences of Lemma \ref{l:parahoric-intertwining}:
\begin{itemize}
\item Every time we apply a simple transposition, the length decreases, as $w_0$ is the longest Weyl element,
\item $M_\nu f^\theta_{Bw_0J} =
\prod_{i=1}^{n-1}c_{n+i,2n}(\theta) f_{Bw_0J}^{\nu\theta}$, and
\item for $0\leqslant k\leqslant n$ we have $M_{(n+i,n+i+1)}F_k^{\theta} = c_{(n+i,n+i+1)}(\theta)F_k^{(n+i,n+i+1)\theta}$ if $i>k$ and 
\[M_{(n+k,n+k+1)}F_k^{\theta} = (c_{(n+k,n+k+1)}(\theta)-q^{-1})F_k^{(n+k,n+k+1)\theta}+F_{k+1}^{(n+k,n+k+1)\theta}.\] 
\end{itemize}
Indeed,  $(n,\ldots, n+k)(n+i,n+i+1)(n+k,\ldots,n)=(n+i,n+i+1)\in W_H$ for  $i>k$,  whereas 
$(n,\ldots, n+k)(n+k,n+k+1)(n_k,\ldots,n)=(n,n+k+1)\notin W_H$.
\end{proof}

\subsection{Shalika functions of certain test functions}\label{ss:shalika-at1}
To determine Shalika local coefficients, it suffices to compute the value of Shalika functions as 
$t^{-\delta}\varpi^\delta =\begin{spmatrix} 1&\\&\varpi^\delta\end{spmatrix}$. Writing $\mathcal{D}=(\theta\delta_B^{1/2})^{w_0}(t^{-\delta}\varpi^\delta )=q^{-\delta n^2/2}\prod\limits_{i=1}^n \theta_i(\varpi)^\delta$
\begin{align*}
\mathcal{S}_{\theta}(F^\theta_k)(t^{-\delta}\varpi^\delta )&=\int_{M_n(F)}F^\theta_k
                                     \left(\begin{pmatrix}&1\\1&X\end{pmatrix}\begin{pmatrix}
                                     1&\\&\varpi^{\delta}\end{pmatrix}\right)\psi(-\tr
  X)dX\\
  &=\int_{M_n(F)}F^\theta_k \left(\begin{pmatrix} &1\\1&\end{pmatrix}t^{-\delta}\varpi^\delta \begin{pmatrix} 1&\varpi^{\delta}X\\&1\end{pmatrix}\right)\psi(-\tr X)dX\\
  &=q^{\delta n}\mathcal{D} \int_{M_n(F)}F^\theta_k \begin{pmatrix}&1\\1&X\end{pmatrix}\psi(-\tr(\varpi^{-\delta}X))dX
\end{align*}
As  $F_k^\theta$ is $J$-invariant, for $n\in N_n(\mathcal{O})$, we have $F_k^\theta \begin{pmatrix}&1\\1&X\end{pmatrix}=F_k^\theta \left(\begin{pmatrix} &1\\1&\end{pmatrix}\begin{pmatrix} n^{-1}&\\&1\end{pmatrix}\begin{pmatrix} 1&X\\&1\end{pmatrix}\begin{pmatrix} n&\\&1\end{pmatrix}\right)$ yielding 
\begin{align*}
\mathcal{S}(F_k^\theta)(t^{-\delta}\varpi^\delta )&=q^{\delta n}\mathcal{D}\int_{M_n(F)}\int_{N_n(\mathcal{O})}F_k^\theta \left(\begin{pmatrix}
&1\\1&\end{pmatrix}\begin{pmatrix}
n^{-1}&\\&1\end{pmatrix}\begin{pmatrix}
1&X\\&1\end{pmatrix}\begin{pmatrix}
n&\\&1\end{pmatrix}\right)\psi(-\tr(\varpi^{-\delta}X))dXdn\\
 &= q^{\delta n}\mathcal{D}\int_{M_n(F)}\int_{N_n(\mathcal{O})}F_k^\theta \begin{pmatrix}&1\\1&n^{-1} X\end{pmatrix}\psi(-\tr(\varpi^{-\delta}X))dXdn\\
  &=q^{\delta n}\mathcal{D}\int_{M_n(F)} F_k^\theta \begin{spmatrix}&1\\1&X\end{spmatrix} \left(\int_{N_n(\mathcal{O})}\psi(-\tr(\varpi^{-\delta} nX))dn\right)dX
\end{align*}
Observe that $\displaystyle \int_{N_n(\mathcal{O})}\psi(-\tr(\varpi^{-\delta}nX))dn$ vanishes unless $X=(x_{ij})$ with $x_{ij}\in \mathcal{O}$ for
all $i>j$. The $J$-invariance of $F_k^\theta$ allows to integrate these variables out, hence we may assume that  $X$ is upper triangular. 
\begin{align*}
\mathcal{S}(F^\theta_k)(t^{-\delta}\varpi^\delta )&=q^{\delta n}\mathcal{D}\int_{B_n(F)} F_k^\theta
                  \begin{pmatrix}&1\\1&X\end{pmatrix}\psi(-\tr(\varpi^{-\delta}X))dX.
\end{align*}

To compute the above integral, it will suffice to restrict out attention to those upper triangular $X$ such that 
$\begin{pmatrix}&1\\1&X\end{pmatrix}\in \supp(F_k^\theta)=B(n+k,\ldots,n)w_0J$. Suppose      $\begin{pmatrix}a&b\\&c\end{pmatrix}\in B$ such that
\[\begin{pmatrix}a&b\\&c\end{pmatrix}\begin{pmatrix} &1\\1&X\end{pmatrix}=\begin{pmatrix} b&a+bX\\c&cX\end{pmatrix}\in (n+k,\ldots,n)w_0J=\begin{pmatrix} \varpi M_n(\mathcal{O})&K_n\\K_n&M_n(\mathcal{O})\end{pmatrix}.\]
We deduce that
\[\left(
\begin{array}{ccccccc|ccccccc}
&&&b_{ij},i<n&&&&&&&(a+bX)_{ij},i<n&&&\\
  \hdashline
&&&c_k&\ldots&&&&&&c_kx_k&\ldots&&  \\
  \hline
  b_{n1}&b_{n2}&\ldots &&&&b_{nn} & &&&(a+bX)_{n,*}&&&\\
\hdashline
  c_{1}&c_{12}&\ldots &&&&c_{1n}&c_{1}x_{1}&\ldots&&&&&\\
        &\ddots&       &&&&      &           &\ddots&&&&&\\
        &      &c_{k-1}&\ldots &&&c_{k-1,n}& &&
                                                c_{k-1}x_{k-1}&\ldots&&&\\
  \hdashline
       &&&&c_{k+1}&\ldots &c_{k+1,n}&&&&&c_{k+1}x_{k+1}&\ldots&\\
       &&&&&\ddots &&&&&&&\ddots&\\
  &&&&&&c_n&&&&&&&c_nx_n
\end{array}
\right)\]
is in $\begin{pmatrix} \varpi
M_n(\mathcal{O})&K_n\\K_n&M_n(\mathcal{O})\end{pmatrix}$. First, note that
\[\begin{pmatrix}
c_{k+1}&\ldots&\\&\ddots&\\&&c_n\end{pmatrix}\in K_{n-k-1},\ \ \ \begin{pmatrix}
c_{k+1}x_{k+1}&\ldots&\\&\ddots&\\&&c_nx_n\end{pmatrix}\in
M_{n-k-1}(\mathcal{O})\]
which implies that $x_{ij}\in \mathcal{O}$ for $i>k$. As before,
we can integrate these variables out and assume that $x_{ij}=0$
for $i>k$. Similarly, we note that
\[\begin{pmatrix}          b_{n1}&b_{n2}&\ldots &&&&b_{nn}\\
c_{1}&c_{12}&\ldots &&&&c_{1n}\\
        &\ddots&       &&&& \\
        &      &c_{k-1}&\ldots &&&c_{k-1,n} \end{pmatrix}\in K_k,\ \ \ \ \begin{pmatrix}         &&&(a+bX)_{n,*}&&&\\
        c_{1}x_{1}&\ldots&&&&&\\
        &\ddots&&&&&\\
        &&
        c_{k-1}x_{k-1}&\ldots&&&\\
\end{pmatrix}\in M_{k-1,n}(\mathcal{O}).\]
Dropping the last column of the matrix on the right means that
we can ignore the upper triangular matrix $a$. We see that
\[\begin{pmatrix}         &&&(bX)_{n,<n}&&&\\
c_{1}x_{1}&\ldots&&&&&\\
        &\ddots&&&&&\\
        &&
        c_{k-1}x_{k-1}&\ldots&&&
        \end{pmatrix}=\begin{pmatrix}  b_{n1}&b_{n2}&\ldots &b_{nk}\\
        c_{1}&c_{12}&\ldots &c_{1k}\\
        &\ddots&       & \\
        &      &c_{k-1}&c_{k-1,k}
         \end{pmatrix}
        \begin{pmatrix}x_1&\ldots&&&x_{1,n-1}\\
        &\ddots&&&\\
        &&x_{k-1}&\ldots&x_{k-1,n-1}\end{pmatrix}\]
        has integral entries.

Again, we deduce that $x_{ij}\in \mathcal{O}$ (and therefore can
be assumed to be 0) for $i<k,j<n$.

Finally, from the parahoric matrix we deduce that the row
$c_k,\ldots, c_{k,n}$ lies in $(\varpi)$, and the row $0,\ldots,
0, c_kx_{k,n}$ is a row in $K_n$. Therefore, $c_k\in (\varpi)$
and $c_k x_{k,n}\in \mathcal{O}^\times$, which implies that
$x_{k,n}\in F-\mathcal{O}$.

Putting everything together, we see that
\[\mathcal{S}(F^\theta_k)(t^{-\delta}\varpi^\delta )=q^{\delta n}\mathcal{D}\int_{X} F_k^\theta\left(\begin{pmatrix}&1\\1&X\end{pmatrix}\right)\psi(-\tr(\varpi^{-\delta}X))dX,\]
where the integral is taken over matrices $X$ all of whose
entries are 0 except the top $k$ on the $n$-th column, the lowest entry of which is in $F-\mathcal{O}$.

What about this last column? We'll show that if $X$ is of this form then $\begin{pmatrix}&1\\1&X\end{pmatrix}\in B(n+k,\ldots,n)w_0J$ whenever $v(z_k)<0$. In fact, we'll show that in this case we can choose our matrix $\begin{pmatrix} a&b\\&c\end{pmatrix}$ such that  
\[
\begin{pmatrix}
b_{n,1}&\ldots&&b_{n,k}\\
c_1&\ldots &&c_{1,k}\\
& \ddots &&\\
&&c_{k-1}&c_{k-1,k}
\end{pmatrix}\begin{pmatrix} z_1\\z_2\\\vdots\\z_k\end{pmatrix}=\begin{pmatrix} -a_n\\0\\\vdots\\0\end{pmatrix},
\]
in which case, up to a unit, $z_k = c_1\cdots c_{k-1}a_n$ and
$c_k = z_k^{-1}$. 

Suppose $v(z_k)<v(z_{k-1})$. Then we can choose $c_{k-1}=1$,
$c_{k-1,k}=-\frac{z_{k-1}}{z_k}$, in which case the above matrix
identity becomes
\[\begin{pmatrix}
b_{n,1}&\ldots&b_{n,k-2}&b'_{n,k}\\
c_1&\ldots &c_{1,k-2}&c'_{1,k}\\
& \ddots &&\\
&&c_{k-2}&c'_{k-2,k}
\end{pmatrix}\begin{pmatrix}
z_1\\\vdots\\z_{k-2}\\z_k\end{pmatrix}=\begin{pmatrix}
-a_n\\0\\\vdots\\0\end{pmatrix},\]
where $b'_{n,k}=b_{n,k}+b_{n,k-1}\frac{z_{k-1}}{z_k}$ and
$c'_{i,k}=c_{i,k}+c_{i,k-1}\frac{z_{k-1}}{z_k}$. The left matrix
is still in $K_{k-1}$ and (up to a unit) $z_k=ac_1\cdots c_{k-2}$.

If $v(z_k)\geq v(z_{k-1})$, we can choose $c_{k-1}=-\frac{z_k}{z_{k-1}}$ and $c_{k-1,k}=1$. Again, we get 
\[\begin{pmatrix}
b_{n,1}&\ldots&b_{n,k-2}&b'_{n,k-1}\\
c_1&\ldots &c_{1,k-2}&c'_{1,k-1}\\
& \ddots &&\\
&&c_{k-2}&c'_{k-2,k-1}
\end{pmatrix}\begin{pmatrix}
z_1\\\vdots\\z_{k-2}\\z_{k-1}\end{pmatrix}=\begin{pmatrix}
-a_n\\0\\\vdots\\0\end{pmatrix},\]
where $b'_{n,k-1}=b_{n,k-1}+b_{n,k}\frac{z_{k}}{z_{k-1}}$ and
$c'_{i,k-1}=c_{i,k-1}+c_{i,k}\frac{z_{k}}{z_{k-1}}$. The left matrix
is still in $K_{k-1}$ and (up to a unit) $z_{k-1}=ac_1\cdots
c_{k-2}$.

\subsubsection{The computation for $F_k^\theta$ when $k<n$}\label{ss:shalikaFk}
In this case, $\tr X=0$ and we are computing
\begin{equation}\label{eq:Fk}\mathcal{S}_\theta(F^\theta_k)(t^{-\delta}\varpi^\delta )=q^{\delta n}\mathcal{D}\int_{v(z_k)<0} F^\theta_k \begin{spmatrix}&1\\1&X\end{spmatrix}dz_1\ldots dz_k.
\end{equation}
For unramified characters $\eta,\eta_1,\ldots, \eta_{k-1}$ let
\[\mathcal{I}(\eta,\eta_1,\ldots,\eta_{k-1})=\int_{v(z_k)<0}\eta|\cdot|^{-k}(a)\prod_{i=1}^{k-1}\eta_i|\cdot|^{-(k-i)}(c_i)dz_1\cdots dz_k,\]
in which case \eqref{eq:Fk} becomes
\begin{align*}&=q^{\delta n}\int_{v(z_k)<0}\theta_n|\cdot|^{\frac{1}{2}}(a^{-1})\theta_{n+1}|\cdot|^{\frac{1}{2}-1}(c_1^{-1})\cdots
    \theta_{n+k-1}|\cdot|^{\frac{1}{2}-(k-1)}(c_{k-1}^{-1})\theta_{n+k}|\cdot|^{\frac{1}{2}-k}(ac_1\cdots
    c_k)dz_1\cdots dz_k\\
  &=q^{\delta n}\int_{v(z_k)<0}\theta_n^{-1}\theta_{n+k}|\cdot|^{-k}(a)\prod_{i=1}^{k-1}\theta_{n+i}^{-1}\theta_{n+k}|\cdot|^{i-k}(c_i)dz_1\cdots
    dz_k\\
  &=q^{\delta n}\mathcal{I}(\frac{\theta_{n+k}}{\theta_n},\frac{\theta_{n+k}}{\theta_{n+1}},\ldots,\frac{\theta_{n+k}}{\theta_{n+k-1}}).
\end{align*}
\begin{lemma}\label{l:I}
We have $\displaystyle \mathcal{I}(\eta) =
\left(1-\frac{1}{q}\right)\frac{\eta(\varpi)^{-1}}{1-\eta(\varpi)^{-1}}$ and
\[\mathcal{I}(\eta,\eta_1,\ldots,\eta_{k-1})=c(\eta_{k-1})\mathcal{I}(\eta,\eta_1,\ldots,\eta_{k-2})-d(\eta_{k-1})\mathcal{I}(\eta\eta_{k-1}^{-1},\eta_1\eta_{k-1}^{-1},\ldots,\eta_{k-2}\eta_{k-1}^{-1}).\]
\end{lemma}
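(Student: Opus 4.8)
The plan is to get the closed form for $\mathcal{I}(\eta)$ by a direct shell‑by‑shell evaluation, and then to obtain the recursion by partitioning the domain $\{v(z_k)<0\}$ according to whether $v(z_{k-1})>v(z_k)$ or not, feeding in the two case reductions of the parahoric matrix equation carried out in the analysis just above. I would work throughout in a range of the unramified parameters where every integral in sight converges absolutely, so that Fubini applies, and conclude in general from the fact that each $\mathcal{I}$ is visibly a rational function of the $\eta(\varpi)$'s and $\eta_i(\varpi)$'s.

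For the base case, $k=1$ forces $z_1=a$ up to a unit, so $\mathcal{I}(\eta)=\int_{v(z_1)<0}\eta|\cdot|^{-1}(z_1)\,dz_1$; cutting $\{v(z_1)=-m\}$ (for $m\geq1$) into annuli of volume $q^{m}(1-q^{-1})$ on which $|z_1|^{-1}=q^{-m}$ and $\eta(z_1)=\eta(\varpi)^{-m}$ turns the integral into $(1-q^{-1})\sum_{m\geq1}\eta(\varpi)^{-m}$, which is the stated value. I would also record the two numerical identities that make the recursion close up: $c(\eta)-d(\eta)=q^{-1}$, immediate from the definitions, and, rewriting the formula just obtained, $\mathcal{I}(\eta)=\frac{1-q^{-1}}{\eta(\varpi)-1}=-d(\eta)$.

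For the recursion, split $\{v(z_k)<0\}$ along the two cases of the matrix reduction above: (A) $v(z_{k-1})>v(z_k)$, and (B) $v(z_{k-1})\leq v(z_k)$. In case (A) one may take $c_{k-1}$ a unit and is left with the $(k-1)$-variable system in $z_1,\dots,z_{k-2},z_k$ with $z_k=ac_1\cdots c_{k-2}$ up to a unit; comparing the exponents of $|\cdot|$ shows that on this region the $k$-variable integrand equals $|z_k|^{-1}$ times the integrand of $\mathcal{I}(\eta,\eta_1,\dots,\eta_{k-2})$ and is independent of $z_{k-1}$. Since $\int_{v(z_{k-1})>v(z_k)}dz_{k-1}=q^{-1}|z_k|$, the factors of $|z_k|$ cancel and case (A) contributes $q^{-1}\mathcal{I}(\eta,\eta_1,\dots,\eta_{k-2})$. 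In case (B) one takes $c_{k-1}=-z_k/z_{k-1}$ and is left with the $(k-1)$-variable system in $z_1,\dots,z_{k-2},z_{k-1}$ with $z_{k-1}=ac_1\cdots c_{k-2}$ up to a unit; writing $\eta_{k-1}|\cdot|^{-1}(c_{k-1})=\eta_{k-1}|\cdot|^{-1}(z_k)\cdot\eta_{k-1}(z_{k-1})^{-1}|z_{k-1}|$ and folding the last two factors into the surviving characters via $z_{k-1}=ac_1\cdots c_{k-2}$, the $k$-variable integrand on this region equals $\eta_{k-1}|\cdot|^{-1}(z_k)$ times the integrand of $\mathcal{I}(\eta\eta_{k-1}^{-1},\eta_1\eta_{k-1}^{-1},\dots,\eta_{k-2}\eta_{k-1}^{-1})$ in the variables $z_1,\dots,z_{k-1}$. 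Integrating out $z_k$ over $\{v(z_{k-1})\leq v(z_k)\leq-1\}$ — a truncated geometric series — yields the factor $\mathcal{I}(\eta_{k-1})\big(1-\eta_{k-1}(z_{k-1})\big)$ by the base case. The summand $1$ reproduces $\mathcal{I}(\eta\eta_{k-1}^{-1},\dots,\eta_{k-2}\eta_{k-1}^{-1})$; in the summand $-\eta_{k-1}(z_{k-1})$ the identity $\eta_{k-1}(z_{k-1})=\eta_{k-1}(ac_1\cdots c_{k-2})$ untwists every character and returns $-\mathcal{I}(\eta,\eta_1,\dots,\eta_{k-2})$. Thus case (B) contributes $\mathcal{I}(\eta_{k-1})\big(\mathcal{I}(\eta\eta_{k-1}^{-1},\dots)-\mathcal{I}(\eta,\dots)\big)$, which equals $d(\eta_{k-1})\big(\mathcal{I}(\eta,\dots)-\mathcal{I}(\eta\eta_{k-1}^{-1},\dots)\big)$ since $\mathcal{I}(\eta_{k-1})=-d(\eta_{k-1})$. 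Adding the contributions of (A) and (B) and using $c(\eta_{k-1})=d(\eta_{k-1})+q^{-1}$ gives the asserted recursion.

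The main obstacle is the bookkeeping behind the two factorizations "integrand on region (A) or (B) $=$ (explicit monomial in the $z_i$) $\times$ (integrand of a $(k-1)$-variable $\mathcal{I}$)": one must verify that the scalars $a,c_1,\dots,c_{k-2}$ read off from the reduced matrix equation carry the same valuations as their counterparts in the original equation, so that the unramified characters evaluate identically on both, and one must keep straight which of $z_{k-1},z_k$ becomes the new last, $v<0$-constrained variable in each case. Once that is set up, the rest is just the two elementary $p$-adic integrals together with the identities $\mathcal{I}(\eta)=-d(\eta)$ and $c(\eta)-d(\eta)=q^{-1}$.
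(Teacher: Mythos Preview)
Your proposal is correct and follows essentially the same route as the paper: both split $\{v(z_k)<0\}$ according to whether $v(z_{k-1})>v(z_k)$ or $v(z_{k-1})\leq v(z_k)$, use the two parahoric matrix reductions to identify the integrand with a lower-rank $\mathcal{I}$-integrand times an explicit monomial, integrate out the extra variable in each case, and recombine via $c(\eta_{k-1})=d(\eta_{k-1})+q^{-1}$. Your observation $\mathcal{I}(\eta)=-d(\eta)$ packages the final step a bit more cleanly than the paper's direct calculation, but the arguments are otherwise identical.
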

\begin{proof}
The first part is a straightforward computation. For the second part, we compute
\begin{align*}
&\mathcal{I}(\eta,\eta_1,\ldots,\eta_{k-1})=\int_{v(z_k)<0,v(z_k)<v(z_{k-1})}\eta|\cdot|^{-k}(a)\prod_{i=1}^{k-1}\eta_i|\cdot|^{i-k}(c_i)dz_1\cdots
                                            dz_k\\
  &\ +\int_{v(z_{k-1})\leqslant
                           v(z_k)<0}\eta|\cdot|^{-k}(a)\prod_{i=1}^{k-1}\eta_i|\cdot|^{i-k}(c_i)dz_1\cdots
                                            dz_k\\
  &=\int_{v(z_k)<0,v(z_k)<v(z_{k-1})}\eta|\cdot|^{-k}(a)\prod_{i=1}^{k-2}\eta_i|\cdot|^{i-k}(c_i)dz_1\cdots
    dz_k\\
  &\ +\int_{v(z_{k-1})\leqslant v(z_k)<0}\eta|\cdot|^{-k}(a)\prod_{i=1}^{k-2}\eta_i|\cdot|^{i-k}(c_i) \eta_{k-1}|\cdot|^{-1}(-\frac{z_k}{z_{k-1}})dz_1\cdots
                                            dz_k\\
  &=\int_{v(z_k)<0}\eta|\cdot|^{-k}(a)\prod_{i=1}^{k-2}\eta_i|\cdot|^{i-k}(c_i)dz_1\cdots
    dz_{k-2}dz_k\int_{v(z_k)<v(z_{k-1})}dz_{k-1}\\
  &\ +\int_{v(z_{k-1})<0}\eta|\cdot|^{-(k-1)}(a)\prod_{i=1}^{k-2}\eta_i|\cdot|^{i-k-1}(c_i)\eta_{k-1}(z_{k-1})^{-1} dz_1\cdots dz_{k-1} \int_{v(z_{k-1})\leqslant v(z_k)<0}\eta_{k-1}|\cdot|^{-1}(z_k) dz_k\\
  &=\int_{v(z_k)<0}\eta|\cdot|^{-k}(a)\prod_{i=1}^{k-2}\eta_i|\cdot|^{i-k}(c_i)|z_{k-1}\varpi|dz_1\cdots
    dz_{k-2}dz_k\\
  &\ +\frac{1-q^{-1}}{1-\eta_{k-1}(\varpi)}\int_{v(z_{k-1})<0}\eta|\cdot|^{-(k-1)}(a)\prod_{i=1}^{k-2}\eta_i|\cdot|^{i-k-1}(c_i) (1-\eta_{k-1}(z_{k-1})^{-1})dz_1\cdots dz_{k-1}\\
  &=q^{-1}\mathcal{I}(\eta,\eta_1,\ldots,\eta_{k-2})
    +\frac{1-q^{-1}}{1-\eta_{k-1}(\varpi)}\left(\mathcal{I}(\eta,\eta_1,\ldots,\eta_{k-2})-\mathcal{I}(\eta\eta_{k-1}^{-1},\eta_1\eta_{k-1}^{-1},\ldots,\eta_{k-2}\eta_{k-1}^{-1})\right)\\
  &=\frac{1-q^{-1}\eta_{k-1}(\varpi)}{1-\eta_{k-1}(\varpi)}\mathcal{I}(\eta,\eta_1,\ldots,\eta_{k-2})-\frac{1-q^{-1}}{1-\eta_{k-1}(\varpi)}\mathcal{I}(\eta\eta_{k-1}^{-1},\eta_1\eta_{k-1}^{-1},\ldots,\eta_{k-2}\eta_{k-1}^{-1}).\qedhere 
\end{align*}
\end{proof}

We conclude the following recursive formula:
\begin{align*}
q^{-\delta n}\mathcal{S}_\theta(F^\theta_k)(t^{-\delta}\varpi^\delta )&=\mathcal{I}(\frac{\theta_{n+k}}{\theta_n},\frac{\theta_{n+k}}{\theta_{n+1}},\ldots,\frac{\theta_{n+k}}{\theta_{n+k-1}})\\
                                                                                       &=c_{(n+k,n+k-1)}(\theta)\mathcal{I}(\frac{\theta_{n+k}}{\theta_n},\frac{\theta_{n+k}}{\theta_{n+1}},\ldots,\frac{\theta_{n+k}}{\theta_{n+k-2}}) -d_{(n+k,n+k-1)}(\theta)\mathcal{I}(\frac{\theta_{n+k-1}}{\theta_n},\frac{\theta_{n+k-1}}{\theta_{n+1}},\ldots,\frac{\theta_{n+k-1}}{\theta_{n+k-2}}).
\end{align*}

\subsubsection{The computation for $F_n^\theta$}\label{ss:shalikaFn}
In this case, $\tr X=z_n$ and we are computing
\begin{equation}\label{eq:Fn}\mathcal{S}_\theta(F^\theta_n)(t^{-\delta}\varpi^\delta )=q^{\delta n}\mathcal{D}\int_{v(z_n)<0} F^\theta_n \left(\begin{pmatrix}&1\\1&X\end{pmatrix}\right)dz_1\ldots dz_n.
\end{equation}
For unramified characters $\eta,\eta_1,\ldots, \eta_{k-1}$ let
\[\mathcal{I}_\psi(\eta,\eta_1,\ldots,\eta_{k-1})=\int_{v(z_k)<0}\eta|\cdot|^{-k}(a)\prod_{i=1}^{k-1}\eta_i|\cdot|^{i-k}(c_i)\psi(-z_k)dz_1\cdots
dz_k.\]
In this case, we get
\begin{align*}
\mathcal{S}_\theta(F^\theta_n)(t^{-\delta}\varpi^\delta )
  &=q^{\delta n}\int_{v(z_n)<0}\theta_n^{-1}\theta_{2n}|\cdot|^{-n}(a)\prod_{i=1}^{n-1}\theta_{n+i}^{-1}\theta_{2n}|\cdot|^{i-k}(c_i)\psi(-z_n)dz_1\cdots
    dz_k\\
  &=q^{\delta n}\mathcal{I}_\psi\left(\frac{\theta_{2n}}{\theta_n},\frac{\theta_{2n}}{\theta_{n+1}},\ldots,\frac{\theta_{2n}}{\theta_{2n-1}}\right).
\end{align*}

As in the case $k<n$, we will compute this integral recursively:
\begin{lemma}\label{l:Ipsi}
We have
$\mathcal{I}_\psi(\eta)=-q^{-1}\eta(\varpi)^{-1}$ and
\[\mathcal{I}_\psi(\eta,\eta_1,\ldots,\eta_{k-1})=q^{-1}\mathcal{I}_\psi(\eta,\eta_1,\ldots,\eta_{k-2})-\frac{1}{q\eta_{k-1}(\varpi)}\mathcal{I}(\eta\eta_{k-1}^{-1},\eta_1\eta_{k-1}^{-1},\ldots,\eta_{k-2}\eta_{k-1}^{-1}).\]
\end{lemma}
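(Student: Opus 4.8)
The plan is to rerun the proof of Lemma~\ref{l:I} essentially verbatim; the only genuinely new input will be a pair of elementary Gauss-sum-type integrals which, because the additive character occurring in the integrand has conductor $\mathcal{O}$, each collapse to a single term. For the base case ($k=1$), the reduction of \S\ref{ss:shalika-at1} gives $z_1 = a$ up to a unit, so $\mathcal{I}_\psi(\eta)=\int_{v(z_1)<0}\eta|\cdot|^{-1}(z_1)\psi(-z_1)\,dz_1$, which decomposes by valuation as $\sum_{m\geqslant 1}\eta(\varpi)^{-m}q^{-m}\int_{v(z_1)=-m}\psi(-z_1)\,dz_1$; since $\psi$ is trivial on $\mathcal{O}$ but not on $\varpi^{-1}\mathcal{O}$, the inner integral vanishes for $m\geqslant 2$ and equals $-1$ for $m=1$, giving $\mathcal{I}_\psi(\eta) = -q^{-1}\eta(\varpi)^{-1}$.

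For the recursion I would split the domain $\{v(z_k)<0\}$ into the two regions $\{v(z_k)<v(z_{k-1})\}$ and $\{v(z_{k-1})\leqslant v(z_k)<0\}$, exactly as in Lemma~\ref{l:I}, using the recursive solution of the matrix identity of \S\ref{ss:shalika-at1} which on each region pins down the entry $c_{k-1}$ and re-expresses the data as a $(k-1)$-variable integral. On the first region one may take $c_{k-1}=1$; here $z_k$ remains the bottom variable, so the factor $\psi(-z_k)$ survives the reduction, and integrating $z_{k-1}$ out over a ball of volume $q^{-1}|z_k|$ against the compensating $|z_k|^{-1}$ coming from the exponent shift ($z_k = ac_1\cdots c_{k-2}$ up to a unit) contributes $q^{-1}\mathcal{I}_\psi(\eta,\eta_1,\dots,\eta_{k-2})$ --- the exact analogue, now carrying $\psi$, of the first-region contribution in the proof of Lemma~\ref{l:I}.

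The second region is where the argument departs from Lemma~\ref{l:I}. Here one takes $c_{k-1}=-z_k/z_{k-1}$; extracting $\eta_{k-1}(z_{k-1})^{-1}$ from the outer integral and substituting $z_{k-1} = ac_1\cdots c_{k-2}$ up to a unit twists the tuple $(\eta,\eta_1,\dots,\eta_{k-2})$ by $\eta_{k-1}^{-1}$, while the bottom variable $z_k$ is integrated out against $\int_{v(z_{k-1})\leqslant v(z_k)<0}\eta_{k-1}|\cdot|^{-1}(z_k)\psi(-z_k)\,dz_k$. Without $\psi$ this last integral equals $\tfrac{1-q^{-1}}{1-\eta_{k-1}(\varpi)}\bigl(1-\eta_{k-1}(z_{k-1})^{-1}\bigr)$; but the conductor of $\psi$ kills every $v(z_k)=-m$ contribution with $m\geqslant 2$, so it is simply the constant $-q^{-1}\eta_{k-1}(\varpi)^{-1}$. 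Consequently the second region contributes only $-\tfrac{1}{q\eta_{k-1}(\varpi)}\mathcal{I}(\eta\eta_{k-1}^{-1},\eta_1\eta_{k-1}^{-1},\dots,\eta_{k-2}\eta_{k-1}^{-1})$ --- with no $\psi$, since $\psi$ involved only the now-integrated variable $z_k$, and, crucially, with no extra $\mathcal{I}(\eta,\dots,\eta_{k-2})$ summand: that term in the $\mathcal{I}$-recursion arose from the ``$1$'' in $1-\eta_{k-1}(z_{k-1})^{-1}$, which no longer occurs. Summing the two regions gives the asserted recursion.

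I do not anticipate a genuine obstacle. The only delicate part is bookkeeping --- the recursive change of variables and the tracking of exponents --- but this has already been set up in \S\ref{ss:shalika-at1} and reused in the proof of Lemma~\ref{l:I}, and the one new computational point, the truncation of the two Gauss-sum integrals, is immediate from the choice of $\psi$.
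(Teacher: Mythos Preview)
Your proposal is correct and follows the paper's own proof essentially verbatim: split the domain into the two regions $\{v(z_k)<v(z_{k-1})\}$ and $\{v(z_{k-1})\leqslant v(z_k)<0\}$, handle the first exactly as in Lemma~\ref{l:I} (now carrying the $\psi(-z_k)$ factor to produce $q^{-1}\mathcal{I}_\psi$), and in the second observe that the inner Gauss-type integral over $z_k$ collapses to the single value $-q^{-1}\eta_{k-1}(\varpi)^{-1}$, eliminating the second summand that appeared in the $\mathcal{I}$-recursion. Your remark that the conductor of $\psi$ is $\mathcal{O}$ matches the paper's own parenthetical ``(with $\delta=0$)''.
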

\begin{proof}
The first part follows from the fact that (with $\delta=0$)
\[\int_{v(z)=m}\eta(z)\psi(-z)dz=\begin{cases}
-\eta(\varpi)^{-1}&m=-1,\\ 0&m<-1.
\end{cases}
\]
For the second part, we compute
\begin{align*}
&\mathcal{I}_\psi(\eta,\eta_1,\ldots,\eta_{k-1})=\int_{v(z_k)<0,v(z_k)<v(z_{k-1})}\eta|\cdot|^{-k}(a)\prod_{i=1}^{k-1}\eta_i|\cdot|^{i-k}(c_i)\psi(-z_k)dz_1\cdots
                                                 dz_k\\
  &\ +\int_{v(z_{k-1})\leqslant
                           v(z_k)<0}\eta|\cdot|^{-k}(a)\prod_{i=1}^{k-1}\eta_i|\cdot|^{i-k}(c_i)\psi(-z_k)dz_1\cdots
                                            dz_k\\
  &=\int_{v(z_k)<0,v(z_k)<v(z_{k-1})}\eta|\cdot|^{-k}(a)\prod_{i=1}^{k-2}\eta_i|\cdot|^{i-k}(c_i)\psi(-z_k)dz_1\cdots
    dz_k\\
  &\ +\int_{v(z_{k-1})\leqslant v(z_k)<0}\eta|\cdot|^{-k}(a)\prod_{i=1}^{k-2}\eta_i|\cdot|^{i-k}(c_i) \eta_{k-1}|\cdot|^{-1}(-\frac{z_k}{z_{k-1}})\psi(-z_k)dz_1\cdots
                                            dz_k\\
  &=\int_{v(z_k)<0}\eta|\cdot|^{-k}(a)\prod_{i=1}^{k-2}\eta_i|\cdot|^{i-k}(c_i)\psi(-z_k)dz_1\cdots
    dz_{k-2}dz_k\int_{v(z_k)<v(z_{k-1})}dz_{k-1}\\
  &\ +\int_{v(z_{k-1})<0}\eta\eta_{k-1}^{-1}|\cdot|^{-(k-1)}(a)\prod_{i=1}^{k-2}\eta_i\eta_{k-1}^{-1}|\cdot|^{i-k-1}(c_i) dz_1\cdots dz_{k-1} \int_{v(z_{k-1})\leqslant v(z_k)<0}\eta_{k-1}|\cdot|^{-1}(z_k)\psi(-z_k) dz_k\\
  &=\int_{v(z_k)<0}\eta|\cdot|^{-k}(a)\prod_{i=1}^{k-2}\eta_i|\cdot|^{i-k}(c_i)|z_{k-1}\varpi|\psi(-z_k)dz_1\cdots
    dz_{k-2}dz_k\\
  &\ -\frac{1}{q\eta_{k-1}(\varpi)}\int_{v(z_{k-1})<0}\eta(a)\prod_{i=1}^{k-2}\eta\eta_{k-1}^{-1}|\cdot|^{-(k-1)}(a)\prod_{i=1}^{k-2}\eta_i\eta_{k-1}^{-1}|\cdot|^{i-k-1}(c_i)dz_1\cdots dz_{k-1}\\
  &=q^{-1}\mathcal{I}_\psi(\eta,\eta_1,\ldots,\eta_{k-2})-\frac{1}{q\eta_{k-1}(\varpi)}\mathcal{I}(\eta\eta_{k-1}^{-1},\eta_1\eta_{k-1}^{-1},\ldots,\eta_{k-2}\eta_{k-1}^{-1}). \qedhere
\end{align*}
\end{proof}
Therefore
\begin{align*}
q^{-\delta n}\mathcal{S}_\theta(F^\theta_n)(t^{-\delta}\varpi^\delta )&=\mathcal{I}_\psi(\frac{\theta_{2n}}{\theta_n},\frac{\theta_{2n}}{\theta_{n+1}},\ldots,\frac{\theta_{2n}}{\theta_{2n-1}})\\
  &=q^{-1}\mathcal{I}_\psi(\frac{\theta_{2n}}{\theta_n},\frac{\theta_{2n}}{\theta_{n+1}},\ldots,\frac{\theta_{2n}}{\theta_{2n-2}})-\frac{\theta_{2n-1}}{\theta_{2n}}(\varpi)\mathcal{I}(\frac{\theta_{2n-1}}{\theta_n},\frac{\theta_{2n-1}}{q\theta_{n+1}},\ldots,\frac{\theta_{2n-1}}{\theta_{2n-2}})
\end{align*}

\subsection{Factorization}
The main result of this section is a factorization of the Shalika local coefficient. We begin with a technical result on factoring rational functions  subject to the recursion relations from \S\ref{ss:shalika-at1}.

Consider the rational functions $c(x) = \frac{1-q^{-1}x}{1-x}$,
$d(x)=\frac{1-q^{-1}}{1-x}$ and $A(x_1,\ldots, x_n)$ and
$B(x_1,\ldots,x_n)$ defined recursively by
\begin{align*} 
A(x_1,\ldots, x_n)&=c(x_n)A(x_1,\ldots,x_{n-1})-d(x_n)A(x_1x_n^{-1},\ldots,x_{n-1}x_n^{-1})&A(x_1)&=-d(x_1)\\
B(x_1,\ldots, x_n)&=\frac{1}{q}B(x_1,\ldots, x_{n-1})-\frac{1}{qx_n}B(x_1x_n^{-1},\ldots,x_{n-1}x_n^{-1})&B(x_1)&=-\frac{1}{qx_1}
\end{align*}
for $n\geq 2$, with the convention that $A(\emptyset)=1$.
\begin{lemma}\label{l:fact-AB}
For $n\geq 1$ we have
\begin{align} \label{eq:fact-A}
&B(x_1,\ldots,x_n)-d
\left(\frac{1}{qz}\right)^{-1}A(x_1z^{-1},\ldots,
x_nz^{-1})=-\frac{1}{x_1}c \left(\frac{1}{x_1}\right)c
\left(\frac{x_2}{x_1}\right)\cdots c
\left(\frac{x_n}{x_1}\right)\frac{1-x_1^{-1}}{z^{-1}-x_1^{-1}}, \\ \label{eq:fact-B}
&B(x_1,\ldots, x_n)+\sum_{k=1}^{n}\left(d(x_k)\prod_{j=k+1}^nc(x_j)\right)A(x_1x_k^{-1},\ldots, x_{k-1}x_k^{-1}) =-\frac{1}{x_1}c \left(\frac{1}{x_1}\right)c
\left(\frac{x_2}{x_1}\right)\cdots c
\left(\frac{x_n}{x_1}\right).
\end{align}
\end{lemma}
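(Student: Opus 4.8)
The plan is to prove both identities \eqref{eq:fact-A} and \eqref{eq:fact-B} by induction on $n$, treating them as a coupled system since the recursion for $B$ feeds into the left-hand side of \eqref{eq:fact-A} while the recursion for $A$ does the same; once \eqref{eq:fact-A} is available in the form of a clean closed expression, \eqref{eq:fact-B} follows by re-summing. The base case $n=1$ is a direct computation: $B(x_1) = -\tfrac{1}{qx_1}$, $A(x_1 z^{-1}) = -d(x_1 z^{-1}) = -\tfrac{1-q^{-1}}{1-x_1 z^{-1}}$, and $d(\tfrac{1}{qz})^{-1} = \tfrac{1 - q^{-1}z^{-1}\,\cdot\,(\text{?})}{1-q^{-1}}$ — here one must be careful about the argument, since $d$ is a function of one variable and $d(\tfrac{1}{qz})$ means substitute $x = \tfrac{1}{qz}$, giving $d(\tfrac{1}{qz}) = \tfrac{1-q^{-1}}{1 - (qz)^{-1}}$; after clearing denominators the two sides of \eqref{eq:fact-A} agree, and \eqref{eq:fact-B} for $n=1$ reads $B(x_1) + d(x_1)A(\emptyset) = -\tfrac{1}{qx_1} + \tfrac{1-q^{-1}}{1-x_1}$, which should equal $-\tfrac{1}{x_1}c(\tfrac{1}{x_1}) = -\tfrac{1}{x_1}\cdot\tfrac{1-q^{-1}x_1^{-1}}{1-x_1^{-1}}$; a short manipulation confirms this.

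For the inductive step on \eqref{eq:fact-A}, I would substitute the defining recursions for $B(x_1,\ldots,x_n)$ and for $A(x_1 z^{-1},\ldots,x_n z^{-1})$ (applied with the last variable $x_n z^{-1}$) into the left-hand side. The term $\tfrac1q B(x_1,\ldots,x_{n-1})$ pairs with $c(x_n z^{-1})$ times the inductive-hypothesis expression, while the two "$x_k^{-1}$-twisted" tails — $-\tfrac{1}{qx_n}B(x_1 x_n^{-1},\ldots,x_{n-1}x_n^{-1})$ and the $-d(x_n z^{-1})A(\ldots)$ piece — should combine, again via the inductive hypothesis applied to the twisted arguments, into a single multiple of the twisted product $c(\tfrac{x_n}{x_1})c(\tfrac{x_n x_2}{x_1 x_n})\cdots$. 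The key algebraic fact driving the collapse is the partial-fractions / cocycle identity relating $c$ and $d$: one has $c(x)+c(x^{-1}) = 1 + q^{-1}$ (equivalently $d(x)+d(x^{-1}) = 1-q^{-1}$ up to the shift $d = c - q^{-1}$), together with multiplicativity-type relations for products $c(\tfrac{x_j}{x_1})$ under the substitution $x_i \mapsto x_i x_n^{-1}$. After these substitutions one is left with a rational-function identity in $x_1,\ldots,x_n,z$ with denominators $1-x_1^{-1}$, $z^{-1}-x_1^{-1}$, $1-x_n z^{-1}$ and the factors $1-\tfrac{x_j}{x_1}$; clearing denominators reduces it to a polynomial identity that can be checked by comparing it against the known factored form on the right. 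I expect the bookkeeping of which twisted product appears — keeping straight that twisting by $x_n^{-1}$ turns $c(\tfrac{x_j}{x_1})$ into $c(\tfrac{x_j}{x_1})$ unchanged (the ratios are twist-invariant!) except for the $x_1$-slot which becomes $c(\tfrac{x_n}{x_1})$ — to be the main bookkeeping subtlety, though not a genuine obstacle.

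For \eqref{eq:fact-B}, the cleanest route is to observe that its left-hand side differs from that of \eqref{eq:fact-A} only in the "correction" term: \eqref{eq:fact-A} subtracts $d(\tfrac1{qz})^{-1}A(x_1 z^{-1},\ldots,x_n z^{-1})$, whereas \eqref{eq:fact-B} adds $\sum_{k=1}^n (d(x_k)\prod_{j>k}c(x_j))A(x_1 x_k^{-1},\ldots,x_{k-1}x_k^{-1})$. I would prove directly, again by induction on $n$ using the $A$-recursion, the telescoping identity
\begin{equation*}
\sum_{k=1}^n \Bigl(d(x_k)\prod_{j=k+1}^n c(x_j)\Bigr)A(x_1 x_k^{-1},\ldots,x_{k-1}x_k^{-1}) = A(x_1,\ldots,x_n) + \Bigl(\prod_{j=1}^n c(x_j) - 1\Bigr)\cdot(\text{something}),
\end{equation*}
or more simply recognize that the sum on the left of \eqref{eq:fact-B}, when its $k=n$ term $d(x_n)A(x_1 x_n^{-1},\ldots)$ is split off and the remaining sum is multiplied by $c(x_n)$, satisfies exactly the $A$-recursion — so the whole sum plus $B$ collapses to the $z$-independent limit of \eqref{eq:fact-A}. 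Concretely, taking $z \to \infty$ in \eqref{eq:fact-A} sends $z^{-1} \to 0$, so $d(\tfrac1{qz})^{-1} \to \tfrac{1-q^{-1}}{1-q^{-1}} = 1$ and $\tfrac{1-x_1^{-1}}{z^{-1}-x_1^{-1}} \to 1$, while $A(x_1 z^{-1},\ldots,x_n z^{-1}) \to A(0,\ldots,0)$; one checks from the recursion that $A(0,\ldots,0) = 0$ for $n\geq 1$ (since $A(0) = -d(0) = -(1-q^{-1})\neq 0$, so actually this limit needs the more careful comparison below rather than a naive specialization). The robust version: both \eqref{eq:fact-A} and \eqref{eq:fact-B} have right-hand sides that are $-\tfrac{1}{x_1}\prod_{j} c(\tfrac{x_j}{x_1})$ times $\tfrac{1-x_1^{-1}}{z^{-1}-x_1^{-1}}$ and $1$ respectively, so subtracting, \eqref{eq:fact-B} is equivalent to
\begin{equation*}
\sum_{k=1}^n \Bigl(d(x_k)\prod_{j=k+1}^n c(x_j)\Bigr)A(x_1 x_k^{-1},\ldots,x_{k-1}x_k^{-1}) + d\Bigl(\tfrac1{qz}\Bigr)^{-1}A(x_1 z^{-1},\ldots,x_n z^{-1}) = \tfrac{1}{x_1}\prod_{j=1}^n c\Bigl(\tfrac{x_j}{x_1}\Bigr)\Bigl(\tfrac{1-x_1^{-1}}{z^{-1}-x_1^{-1}} - 1\Bigr),
\end{equation*}
an identity with no $B$ in it, provable purely from the $A$-recursion by induction on $n$ — this is where I expect the real work to sit, and the hard part is finding the right induction hypothesis so that the $z$-dependent term and the sum interlock. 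Once both \eqref{eq:fact-A} and \eqref{eq:fact-B} are in hand the lemma is proved.
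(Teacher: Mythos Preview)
Your plan for \eqref{eq:fact-A} is essentially the paper's: both proceed by induction, substituting the recursions for $B(x_1,\ldots,x_{n+1})$ and $A(x_1z^{-1},\ldots,x_{n+1}z^{-1})$, then invoking the inductive hypothesis twice (once with twist $z^{-1}$, once with twist $x_{n+1}^{-1}$) to rewrite the two $A$-terms as $B(x_1,\ldots,x_n)$ plus closed products; the coefficient of $B(x_1,\ldots,x_n)$ then vanishes identically and the remaining products collapse to the right-hand side. Your observation that the ratios $x_j/x_1$ are invariant under a common twist is exactly what makes the product on the right stable.

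For \eqref{eq:fact-B} your plan is vaguer and diverges from the paper. The paper does \emph{not} eliminate $B$ or pass to an auxiliary $z$-dependent identity; it runs a second direct induction on \eqref{eq:fact-B}, showing that $\mathrm{LHS}_{n+1} = c(x_{n+1}x_1^{-1})\cdot\mathrm{LHS}_n$ reduces, after expanding the $B$-recursion and one use of the inductive hypothesis for \eqref{eq:fact-B} itself, to the already-proven identity \eqref{eq:fact-A} specialized at $z=x_{n+1}$ (and rescaled by $d(x_{n+1})$). Your ``subtract to remove $B$'' route produces a $z$-dependent identity that is logically equivalent to \eqref{eq:fact-B} given \eqref{eq:fact-A}, so it does not simplify the task; and the induction you sketch for it does not close up, because after applying the $A$-recursion the coefficient of $A(x_1z^{-1},\ldots,x_nz^{-1})$ is $c(x_{n+1}z^{-1})$ rather than the $c(x_{n+1})$ multiplying the sum, so you cannot factor out a common multiple of the $n$-case. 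The $z\to\infty$ limit, as you already noted, fails since $A(0,\ldots,0)$ is neither zero nor even well-defined from the recursion. The missing idea is simply to induct on \eqref{eq:fact-B} directly and feed \eqref{eq:fact-A} in at the end of the inductive step.
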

\begin{proof}
The proof is a straightforward, but unelightening, induction. We
include the computations for the convenience of the reader. We first prove  \eqref{eq:fact-A} The base case $n=1$ is easy to check:
\begin{align*}
B(x_1)-d
  \left(\frac{1}{qz}\right)^{-1}A(x_1z^{-1})&=-\frac{1}{qx_1}+\frac{1-\frac{1}{qz}}{1-\frac{1}{q}}\frac{1-\frac{1}{q}}{1-\frac{x_1}{z}}=-\frac{1}{qx_1}+\frac{1-\frac{1}{qz}}{1-\frac{x_1}{z}}=\frac{1-\frac{1}{qx_1}}{1-\frac{x_1}{z}}=-\frac{1}{x_1}c \left(\frac{1}{x_1}\right)\frac{1-x_1^{-1}}{z^{-1}-x_1^{-1}}.
\end{align*}
The recursion relations give
\begin{align*}
B(x_1,\ldots,x_{n+1})-d
\left(\frac{1}{qz}\right)^{-1}A(x_1z^{-1},\ldots,
x_{n+1}z^{-1})&=\frac{1}{q}B(x_1,\ldots,x_{n})-\frac{1}{qx_{n+1}}B(x_1x_{n+1}^{-1},\ldots,
                x_nx_{n+1}^{-1})\\
  &\ -d
    \left(\frac{1}{qz}\right)^{-1}c(x_{n+1}z^{-1})A(x_1z^{-1},\ldots,x_nz^{-1})\\
  &\ -d
\left(\frac{1}{qz}\right)^{-1}d(x_{n+1}z^{-1})A(x_1x_{n+1}^{-1},\ldots,
                x_nx_{n+1}^{-1}).
\end{align*}
Using the inductive hypothesis, we have
\begin{align*}
A(x_1z^{-1},\ldots,x_nz^{-1})&=d(q^{-1}z^{-1})B(x_1,\ldots,x_n)+\frac{d(q^{-1}z^{-1})}{x_1}c(x_1^{-1})c(x_2x_1^{-1})\ldots c(x_nx_1^{-1})\frac{1-x_1^{-1}}{z^{-1}-x_1^{-1}}\\
A(x_1x_{n+1}^{-1},\ldots,x_nx_{n+1}^{-1})&=d(q^{-1}x_{n+1}^{-1})B(x_1,\ldots,x_n)+\frac{d(q^{-1}x_{n+1}^{-1})}{x_1}c(x_1^{-1})c(x_2x_1^{-1})\ldots c(x_nx_1^{-1})\frac{1-x_1^{-1}}{x_{n+1}^{-1}-x_1^{-1}}
\end{align*}
Plugging into the expression above gives
\begin{align*}
  &=B(x_1,\ldots,x_n)\left(\frac{1}{q}+d(q^{-1}x_{n+1}^{-1})\left(-\frac{1}{qx_{n+1}}+\frac{d(x_{n+1}z^{-1})}{d(q^{-1}z^{-1})}\right)-c(x_{n+1}z^{-1})\right)\\
  &\ +\frac{1}{x_1}c(x_1^{-1})c(x_2x_1^{-1})\cdots c(x_nx_1^{-1})\left(d(q^{-1}x_{n+1}^{-1})\left(-\frac{1}{qx_{n+1}}+\frac{d(x_{n+1}z^{-1})}{d(q^{-1}z^{-1})}\right)\frac{1-x_1^{-1}}{x_{n+1}^{-1}-x_1^{-1}}-c(x_{n+1}z^{-1})\frac{1-x_1^{-1}}{z^{-1}-x_1^{-1}}\right)
\end{align*}
It's immediate to compute that the coefficient of $B(x_1,\ldots,
x_n)$ is
\begin{align*}
\frac{1}{q}+d(q^{-1}x_{n+1}^{-1})\left(-\frac{1}{qx_{n+1}}+\frac{d(x_{n+1}z^{-1})}{d(q^{-1}z^{-1})}\right)-c(x_{n+1}z^{-1})&=0
\end{align*}
and that
\begin{align*}
d(q^{-1}x_{n+1}^{-1})\left(-\frac{1}{qx_{n+1}}+\frac{d(x_{n+1}z^{-1})}{d(q^{-1}z^{-1})}\right)\frac{1-x_1^{-1}}{x_{n+1}^{-1}-x_1^{-1}}-c(x_{n+1}z^{-1})\frac{1-x_1^{-1}}{z^{-1}-x_1^{-1}}&=-c(x_{n+1}z^{-1})\frac{1-x_1^{-1}}{z^{-1}-x_1^{-1}}
\end{align*}
which finishes the inductive step.

We now turn to  \eqref{eq:fact-B}.   Again, we proceed by induction, the base case being
\begin{align*}
B(x_1)+d(x_1)=-\frac{1}{x_1}c(x_1^{-1}).
\end{align*}
By the inductive hypothesis, it suffices to verify that
\[
B(x_1,\ldots,x_{n+1})+\sum_{k=1}^{n+1}d(x_{k})\prod_{j=k+1}^{n+1}c(x_j)A(x_1x_{k}^{-1},\ldots,x_{k-1}x_{k}^{-1})=\]
\[=c(x_{n+1}x_1^{-1})\left(B(x_1,\ldots,x_{n})+\sum_{k=1}^{n}d(x_{k})\prod_{j=k+1}^{n}c(x_j)A(x_1x_{k}^{-1},\ldots,x_{k-1}x_{k}^{-1})\right)
\]
By definition, it suffices to check
\[\frac{1}{q}B(x_1,\ldots,x_{n})-\frac{1}{qx_{n+1}}A(x_1x_{n+1}^{-1},\ldots,x_nx_{n+1}^{-1})+\sum_{k=1}^{n+1}d(x_{k})\prod_{j=k+1}^{n+1}c(x_j)A(x_1x_{k}^{-1},\ldots,x_{k-1}x_{k}^{-1})=\]
\[=c(x_{n+1}x_1^{-1})B(x_1,\ldots,x_{n})+c(x_{n+1}x_1^{-1})\sum_{k=1}^{n}d(x_{k})\prod_{j=k+1}^{n}c(x_j)A(x_1x_{k}^{-1},\ldots,x_{k-1}x_{k}^{-1})
\]
which is equivalent to
\[\left(\frac{1}{q}-c(x_{n+1}x_1^{-1})\right)B(x_1,\ldots,x_{n})+\left(-\frac{1}{qx_{n+1}}+d(x_{n+1})\right)A(x_1x_{n+1}^{-1},\ldots,x_nx_{n+1}^{-1})=\]
\[=\left(c(x_{n+1}x_1^{-1})-c(x_{n+1})\right)\sum_{k=1}^{n-1}d(x_k)\prod_{j=k+1}^{n-1}A(x_1x_k^{-1},\ldots,
x_{k-1}x_k^{-1}).\]
A quick computation and another application of the inductive
hypothesis leads to verifying
\[-d(x_{n+1}x_1^{-1})B(x_1,\ldots,x_{n})-\frac{1}{x_{n+1}}c(x_{n+1}^{-1})A(x_1x_{n+1}^{-1},\ldots,x_nx_{n+1}^{-1})=\]
\[=\left(c(x_{n+1}x_1^{-1})-c(x_{n+1})\right)\left(-\frac{1}{x_1}c(x_1^{-1})c(x_2x_1^{-1})\cdots
c(x_nx_1^{-1})-B(x_1,\ldots,x_n)\right)\]
which is equivalent to
\[d(x_{n+1})B(x_1,\ldots,x_n)+\frac{1}{x_{n+1}}c(x_{n+1}^{-1})A(x_1x_{n+1}^{-1},\ldots,x_nx_{n+1}^{-1})=\frac{1}{x_1}c(x_1^{-1})c(x_2x_1^{-1})\cdots
c(x_nx_1^{-1})\frac{d(x_{n+1}^{-1})d(x_{n+1}x_1^{-1})}{d(x_1^{-1})},\]
which follows from the first part.
\end{proof}

We are now in a position to state the main factorization result:
\begin{proposition}\label{p:hard-case}
We have
\begin{align*}
\mathcal{C}_{\theta,(i,n)(n+i,2n)}&=c_{n,i}(\theta)c_{2n,n+i}(\theta)\prod_{i<j<n}c_{n,j}(\theta)c_{2n,n+j}(\theta)c_{j,i}(\theta)c_{n+j,n+i}(\theta)\\
  \mathcal{C}_{\theta,(n,2n)}&=-\frac{\theta_n(\varpi)}{\theta_{2n}(\varpi)}c_{n,2n}(\theta)\prod_{i=1}^{n-1}c_{n,n+i}(\theta)c_{n+i,n}(\theta).
\end{align*}
\end{proposition}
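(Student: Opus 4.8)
The plan is to test the defining relation $\mathcal{S}^\theta_\AG\circ M_\tau=\mathcal{C}_{\theta,\tau}\,\mathcal{S}^{\theta^{\tau^{-1}}}_\AG$ on the open-cell vector and read off the constant by comparing the associated Shalika functions at $t^{-\delta}\varpi^\delta$, whose values on the relevant basis vectors are supplied by \S\ref{ss:shalika-at1}--\S\ref{ss:shalikaFn}. Write $\theta'=\theta^{\tau^{-1}}$ and assume first that $I(\theta)$ is irreducible, so that $M_\tau:I(\theta')\to I(\theta)$ is an isomorphism; the general identity then follows because every term is a rational function of the $\theta_i(\varpi)$. Since $M_\tau$ is $G$-equivariant, evaluating the relation applied to $f^{\theta'}_{Bw_0J}=F^{\theta'}_0$ at $t^{-\delta}\varpi^\delta$ yields
\[\mathcal{C}_{\theta,\tau}=\frac{\mathcal{S}^\theta_\AG\big(M_\tau F^{\theta'}_0\big)(t^{-\delta}\varpi^\delta)}{\mathcal{S}^{\theta'}_\AG\big(F^{\theta'}_0\big)(t^{-\delta}\varpi^\delta)},\]
provided the denominator is nonzero; it is the degenerate case $k=0$ of the computation of \S\ref{ss:shalika-at1}, where the reduction collapses the domain to $X=0$, and equals the nonzero constant $q^{\delta n}$, independent of the character (consistent with the convention $A(\emptyset)=1$ of Lemma~\ref{l:fact-AB}).

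For $\tau=(i,n)(n+i,2n)$ the numerator is read off directly from Proposition~\ref{p:complicated-intertwining}, which exhibits $M_\tau F^{\theta'}_0$ as the scalar $c_{i,n}(\theta')c_{n+i,2n}(\theta')\prod_{i<j<n}c_{i,j}(\theta')c_{n+i,n+j}(\theta')c_{j,n}(\theta')c_{n+j,2n}(\theta')$ times $F^{\theta}_0=f^\theta_{Bw_0J}$. Since $\mathcal{S}^\theta_\AG(F^\theta_0)(t^{-\delta}\varpi^\delta)=\mathcal{S}^{\theta'}_\AG(F^{\theta'}_0)(t^{-\delta}\varpi^\delta)=q^{\delta n}$, the quotient is precisely that scalar. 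Now $\theta'=\theta^{(i,n)(n+i,2n)}$ interchanges the indices $i\leftrightarrow n$ and $n+i\leftrightarrow 2n$, so that $c_{i,n}(\theta')=c_{n,i}(\theta)$, $c_{n+i,2n}(\theta')=c_{2n,n+i}(\theta)$, $c_{i,j}(\theta')=c_{n,j}(\theta)$, $c_{n+i,n+j}(\theta')=c_{2n,n+j}(\theta)$, $c_{j,n}(\theta')=c_{j,i}(\theta)$ and $c_{n+j,2n}(\theta')=c_{n+j,n+i}(\theta)$; substituting reassembles the asserted formula for $\mathcal{C}_{\theta,(i,n)(n+i,2n)}$.

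For $\tau=\nu^{-1}(n,n+1)\nu$ we feed the second formula of Proposition~\ref{p:complicated-intertwining},
\[M_\tau F^{\theta'}_0=\prod_{i=1}^{n-1}c_{n+i,2n}(\theta')\Big(F^{\theta}_n+\sum_{k=1}^{n-1}d_{n,n+k}(\theta')\prod_{i=k+1}^{n-1}c_{n,n+i}(\theta')\,F^{\theta}_k+d_{n,2n}(\theta')\prod_{i=1}^{n-1}c_{n,n+i}(\theta')\,F^{\theta}_0\Big),\]
into $\mathcal{S}^\theta_\AG(\cdot)(t^{-\delta}\varpi^\delta)$. By \S\ref{ss:shalikaFk}--\S\ref{ss:shalikaFn} with Lemmas~\ref{l:I} and \ref{l:Ipsi}, $\mathcal{S}^\theta_\AG(F^\theta_k)(t^{-\delta}\varpi^\delta)=q^{\delta n}A\big(\tfrac{\theta_{n+k}}{\theta_n},\dots,\tfrac{\theta_{n+k}}{\theta_{n+k-1}}\big)$ for $0\le k\le n-1$ (with $A(\emptyset)=1$) and $\mathcal{S}^\theta_\AG(F^\theta_n)(t^{-\delta}\varpi^\delta)=q^{\delta n}B\big(\tfrac{\theta_{2n}}{\theta_n},\dots,\tfrac{\theta_{2n}}{\theta_{2n-1}}\big)$, the recursions of those lemmas being exactly the ones defining $A$ and $B$. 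Setting $x_j=(\theta_{2n}/\theta_{n+j-1})(\varpi)$ for $1\le j\le n$ (with $\theta_{n+0}:=\theta_n$), one has $d_{n,2n}(\theta')=d(x_1)$, $d_{n,n+k}(\theta')=d(x_{k+1})$ and $c_{n,n+i}(\theta')=c(x_{i+1})$, so the bracketed expression equals $q^{\delta n}$ times the left-hand side of Lemma~\ref{l:fact-AB}\eqref{eq:fact-B}; by that lemma it collapses to $-q^{\delta n}\,x_1^{-1}c(x_1^{-1})c(x_2x_1^{-1})\cdots c(x_nx_1^{-1})$. Finally $x_1^{-1}=(\theta_n/\theta_{2n})(\varpi)$, $c(x_1^{-1})=c_{n,2n}(\theta)$, $c(x_jx_1^{-1})=c_{n,n+j-1}(\theta)$ for $2\le j\le n$, and $\prod_{i=1}^{n-1}c_{n+i,2n}(\theta')=\prod_{i=1}^{n-1}c_{n+i,n}(\theta)$; dividing by the denominator $q^{\delta n}$ gives $\mathcal{C}_{\theta,(n,2n)}=-\tfrac{\theta_n(\varpi)}{\theta_{2n}(\varpi)}\,c_{n,2n}(\theta)\prod_{i=1}^{n-1}c_{n,n+i}(\theta)c_{n+i,n}(\theta)$.

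The substantive work is concentrated in the second case, and it is combinatorial bookkeeping rather than a conceptual difficulty: one must check that the coefficients $d_{n,2n}(\theta')$, $d_{n,n+k}(\theta')$ and $\prod c_{n,n+i}(\theta')$ produced by Proposition~\ref{p:complicated-intertwining}, together with the character ratios $\tfrac{\theta_{n+k}}{\theta_{n+j}}$ produced by \S\ref{ss:shalika-at1}, match up exactly with the abstract recursion variables $x_j,\ x_jx_m^{-1},\ x_j/x_1$ appearing in Lemma~\ref{l:fact-AB}\eqref{eq:fact-B} under the substitution above. The delicate point is the asymmetric base term $d_{n,2n}$ (rather than a would-be $d_{n,n}$): it is precisely the $x_1$-term of that identity, and it is what produces both the prefactor $\theta_n(\varpi)/\theta_{2n}(\varpi)$ and the single unpaired factor $c_{n,2n}(\theta)$ in the final formula; one also has to track carefully how the transposition $(n,2n)$ permutes the $c$- and $d$-factors in passing between $\theta$ and $\theta'$.
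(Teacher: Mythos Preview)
Your proof is correct and follows essentially the same route as the paper's: evaluate the defining identity for $\mathcal{C}_{\theta,\tau}$ on $f_{Bw_0J}^{\theta^{\tau^{-1}}}$ at $t^{-\delta}\varpi^\delta$, invoke Proposition~\ref{p:complicated-intertwining} to expand $M_\tau f_{Bw_0J}^{\theta^{\tau^{-1}}}$, feed in the values $\mathcal{S}_\theta(F_k^\theta)(t^{-\delta}\varpi^\delta)$ from \S\ref{ss:shalikaFk}--\S\ref{ss:shalikaFn}, and apply Lemma~\ref{l:fact-AB}\eqref{eq:fact-B} under the same substitution $x_j=\theta_{2n}\theta_{n+j-1}^{-1}(\varpi)$. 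Your extra bookkeeping (the generic irreducibility reduction, the explicit dictionary $A\leftrightarrow\mathcal{I}$, $B\leftrightarrow\mathcal{I}_\psi$, and the tracking of how $(n,2n)$ permutes the $c$- and $d$-indices in passing from $\theta'$ to $\theta$) only makes explicit what the paper leaves implicit.
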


\begin{proof}
By definition of the Shalika local coefficient,
\begin{align*}
\mathcal{C}_{\theta,w}=\mathcal{C}_{\theta,w} q^{-\delta n}\mathcal{S}_{\theta^{w^{-1}}}f_{Bw_0J}^{\theta^{w^{-1}}}(t^{\delta})=q^{-\delta n}\mathcal{S}_\theta M_{(n,2n)} f_{Bw_0J}^{\theta^{w^{-1}}}(t^{-\delta}\varpi^\delta ) 
\end{align*}

The first part of the proposition follows immediately from Proposition \ref{p:complicated-intertwining}.
For the second part, note that 
\begin{align*}
\mathcal{C}_{\theta,(n,2n)}=q^{-\delta n}\mathcal{S}_\theta M_{(n,2n)} f_{Bw_0J}^{\theta^{(n,2n)}}(t^{-\delta}\varpi^\delta ) 
\end{align*}
By Proposition \ref{p:complicated-intertwining}, this equals
\begin{align*}
\mathcal{C}_{\theta,(n,2n)}=\prod_{i=1}^{n-1}c_{n+i,n}(\theta) \left(q^{-\delta n}\mathcal{S}_\theta(F_n)(t^{-\delta}\varpi^\delta )+\sum_{k=0}^{n-1}d_{2n,n+k}\prod_{i=k+1}^{n-1}c_{2n,n+i}(\theta)q^{-\delta n}\mathcal{S}_\theta(F_k)(t^{-\delta}\varpi^\delta )\right)
\end{align*}
The computations of \S\ref{ss:shalikaFk} and \S\ref{ss:shalikaFn}   give
\begin{align*}
 & q^{-\delta n}\mathcal{S}_\theta(F_n)(t^{-\delta}\varpi^\delta )+\sum_{k=0}^{n-1}d_{2n,n+k}\prod_{i=k+1}^{n-1}c_{2n,n+i}(\theta)q^{-\delta n}\mathcal{S}_\theta(F_k)(t^{-\delta}\varpi^\delta )\\
  &=\mathcal{I}_\psi(\frac{\theta_{2n}}{\theta_{n}},\ldots,\frac{\theta_{2n}}{\theta_{2n-1}}) +\sum_{k=0}^{n-1}d(\frac{\theta_{2n}}{\theta_{n+k}})\prod_{i=k+1}^{n-1}c(\frac{\theta_{2n}}{\theta_{n+i}})\mathcal{I}(\frac{\theta_{n+k}}{\theta_{n}},\ldots,\frac{\theta_{n+k}}{\theta_{n+k-1}})
\end{align*}
We now apply Lemma \ref{l:fact-AB} with $x_i =
\theta_{2n}\theta_{n+i-1}^{-1}(\varpi)$ and obtain that the above
expression equals
\[-\frac{\theta_n(\varpi)}{\theta_{2n}(\varpi)}\prod_{i=1}^{n}c(\theta_{n}\theta_{n+i}^{-1}(\varpi)),\]
and the desired result follows.
\end{proof}

\begin{remark}
One might ask about the factorization of the Shalika local
coefficient $\mathcal{C}_{\theta,(i,n+i)}$, which we have not
computed. The complexity of the formulas involved are much
greater. For instance, while
$M_{(n,2n)}f_{Bw_0J}^{\theta^{(n,2n)}}$ is supported on $n+1$
parahoric cells, as in the proof above, we have
\begin{align*}
  M_{(i,n+i)}f_{Bw_0J}^{\theta^{(i,n+i)}}&=\prod_{u=1}^{i-1}c_{n+u,i}(\theta)\prod_{v=i+1}^nc_{n+i,v}(\theta)\left(d_{n+i,i}(\theta)\prod_{u=1}^{i-1}c_{n+i,n+u}(\theta)\prod_{v=i+1}^nc_{i,v}(\theta)f_{Bw_0J}^\theta\right.\\
  &\hspace{1cm}+\left.\sum_{1\leqslant k\leqslant i\leqslant \ell\leqslant n}d_{\ell,i}d_{n+i,n+k}\prod_{u=i+1}^{\ell-1}c_{u,i}(\theta)\prod_{v=k+1}^{i-1}c_{n+i,n+v}(\theta)f_{B(\ell,\ell+1,\ldots, k-1,k)w_0J}^\theta\right),
\end{align*}
with the convention $d_{k,k}(\theta)=1$. This function is supported on $i(n+1-i)+1$ cells, and the computation becomes daunting.
\end{remark}

\begin{remark}\label{r:wedge2}
The coefficient $\mathcal{C}_{\theta,(n,2n)}$ can
be written as
\[\mathcal{C}_{\theta,(n,2n)}=\gamma(\theta_n/\theta_{2n},1)\prod_{i=1}^{n-1}\gamma(\theta_n/\theta_{n+i},1)\gamma(\theta_{n+i}/\theta_{n},1),\]
realizing the Shalika local coefficient as a subproduct of $\gamma(\wedge^2 \pi,1)$. 
\end{remark}

\subsection{Proof of Theorem \ref{t:shalika-sub}}\label{s:shalika-sub}
We are now ready to construct an explicit Shalika functional on parahoric spherical representations by analytic continuation. The idea is the following: restrict the Ash--Ginzburg Shalika integral to $\pi\subset \Ind_B^G\theta$. If the restriction does not vanish, we are done; if it does, factor out the Shalika local coefficients computed above. 

Suppose $\pi$ is a  parahoric spherical regular generic   representation of $G$ admitting a Shalika model. By
\cite[Cor. 1.1]{matringe:shalika} and Proposition
\ref{p:parahoric-rank2}, $\pi$ is the normalized parabolic induction
\[\pi\cong\Ind_P^G \left(\prod_{i=1}^a (\theta_i\times\theta_i^{-1})\times \prod_{i=1}^b \varepsilon_i\St \times\prod_{i=1}^c
(\eta_i\St\times\eta_i^{-1}\St)\right),\]
where $\theta_i,\varepsilon_i,\eta_i$ are unramified characters, $\St$ is the Steinberg representation on $\GL_2$, $\varepsilon_i^2=1$, and $a+b+2c=n$. 
By regularity $b\leqslant 2$, as the only possible $\varepsilon_i$ that can appear are $1$ and the  unramified quadratic character. 

Let $\theta$ be the unramified character of $T$ such that 
$\theta_{a+i}=\varepsilon_i|\cdot|^{\frac{1}{2}}$ for $1\leqslant i\leqslant b$, and
$\theta_{a+b+2i-1}=\eta_i|\cdot|^{\frac{1}{2}},
\theta_{a+b+2i}=\eta_i^{-1}|\cdot|^{\frac{1}{2}}$ for $1\leqslant i\leqslant c$; 
for $1\leqslant i\leqslant n$, set $\theta_{n+i}=\theta_i^{-1}$. The
assumption that $\pi$ is regular translates to
$\theta_i\neq\theta_j$ for all $i<j$. Then $\pi\subset\Ind_B^G\theta$ is the unique irreducible subrepresentation, by the Bernstein-Zelevinsky classification. We remark that the $U$-eigenvalue $\alpha$ is uniquely determined by $\theta$, and each spin refinement $\widetilde{\pi}$ can be obtained in this way.
Let $\tau\in W$ be given by the permutation
\begin{equation}
\tau=\prod_{i=1}^b(a+i,n+a+i)\prod_{i=1}^c(a+b+2i-1,n+a+b+2i-1)(a+b+2i,n+a+b+2i).
\end{equation}
The intertwining operator $M_\tau:I(\theta^{\tau^{-1}})\to I(\theta)$ has image $\pi$ as, on each $\GL_2$ Levi
block, the intertwining $M_{(i,j)}:\Ind_{B_2}^{\GL_2}(\eta_i|\cdot|^{-\frac{1}{2}} \times\eta_i|\cdot|^{\frac{1}{2}})\to
\Ind_{B_2}^{\GL_2}(\eta_i|\cdot|^{\frac{1}{2}} \times\eta_i|\cdot|^{-\frac{1}{2}})$ has image $\eta_i\St$. 

For a complex $2n$-tuple $s=(s_1,s_2,\ldots,s_{2n})$, consider $\theta_s= (|\cdot|^{s_1}\theta_1, \dots, |\cdot|^{s_{2n}}\theta_{2n})$. For $s$ in a Zariski open in $\mathbb{C}^{2n}$, the representation $I(\theta_s)$ is irreducible and $M_\tau:I(\theta_s^{\tau^{-1}})\to I(\theta_s)$ is an isomorphism. Let $\mathcal{F}(\theta_s)$ be the vector space of linear combinations of flat families in $I(\theta_s)$, with coefficients which are holomorphic functions in $s$. Note that $G$ acts on $\mathcal{F}(\theta_s)$ by admissibility, and $M_\tau:\mathcal{F}(\theta_s^{\tau^{-1}})\to \mathcal{F}(\theta_s)$ is an isomorphism. 

For $f\in \pi\subset I(\theta)$, consider $f_s\in
\mathcal{F}(\theta_s)$, e.g., a flat family, with $f=f_0$. The
naive definition $\mathcal{S}_{\AG}(f)=\lim\limits_{s\to
  0}\mathcal{S}_{\AG}^{\theta_s}(f_s)$ might not work, as it
might happen that $\mathcal{S}_{\AG}(f)$ vanishes for all $f\in
\pi$. In fact, this always happens if the monodromy rank of
$\pi$ is larger than 2. However, the computations in this
section show that
\begin{equation}\label{eq:sh-c}
\mathcal{S}_{\AG}^{\theta_s}(f_s)=\mathcal{C}_{\theta_s,\tau}\cdot
\mathcal{S}_{\AG}^{\theta_s^{\tau^{-1}}}(M_\tau^{-1}(f_s)).
\end{equation}
The function $\mathcal{S}_{\AG}^{\theta_s}(f_s)$ is holomorphic in $s$, but 
$\mathcal{S}_{\AG}^{\theta_s^{\tau^{-1}}}(M_\tau^{-1}(f_s))$
might not be. If it was, we could define
\[\mathcal{S}_\pi(f)=\lim_{s\to
  0}\frac{\mathcal{S}^{\theta_s}_{\AG}(f_s)}{\mathcal{C}_{\theta_s,\tau}}=\lim_{s\to
  0}\mathcal{S}_{\AG}^{\theta_s^{\tau^{-1}}}(M_\tau^{-1}(f_s)),\]
which we cannot show to be a Shalika functional. To account for this, we must take a preimage under the
intertwining operator before considering families. We begin with a crucial technical result:
\begin{proposition}\label{p:kerT}
The kernel of the intertwining operator $M_\tau:I(\theta^{\tau^{-1}})\to  I(\theta)$ is contained in the kernel of the Ash--Ginzburg Shalika functional $\mathcal{S}_{\AG}^{\theta^{\tau^{-1}}}:I(\theta^{\tau^{-1}})\to \mathbb{C}$.
\end{proposition}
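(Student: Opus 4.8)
The plan is to reduce the statement to the vanishing of the space of Shalika functionals on the submodule $\ker M_\tau$. Since $\ker M_\tau$ is a $G$-submodule of $I(\theta^{\tau^{-1}})$, the restriction of $\mathcal{S}^{\theta^{\tau^{-1}}}_{\AG}$ to it is $(S,\psi)$-equivariant, i.e.\ lies in $\Hom_S(\ker M_\tau,\psi)$; if that space is $0$ the restriction vanishes, which is exactly the claim. It is worth stressing why one cannot argue more cheaply: the identity $\mathcal{S}^{\theta_s}_{\AG}(M_\tau g_s)=\mathcal{C}_{\theta_s,\tau}\,\mathcal{S}^{\theta_s^{\tau^{-1}}}_{\AG}(g_s)$, valid for generic $s$, would give $\mathcal{S}^{\theta^{\tau^{-1}}}_{\AG}(g)=\lim_{s\to 0}\mathcal{S}^{\theta_s}_{\AG}(M_\tau g_s)/\mathcal{C}_{\theta_s,\tau}$ for $g\in\ker M_\tau$, but by Proposition~\ref{p:hard-case} the Shalika local coefficient $\mathcal{C}_{\theta,\tau}$ has a zero precisely at the reducibility points relevant here, so this is a genuine $0/0$ indeterminacy; resolving it is what the proposition is for.

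To control the constituents of $\ker M_\tau$, I would first note that $\pi=\Im M_\tau$ is an irreducible quotient of $I(\theta^{\tau^{-1}})$, so, comparing semisimplifications (which depend only on the Weyl orbit of the inducing character), $\operatorname{JH}(\ker M_\tau)=\operatorname{JH}(I(\theta))\setminus\{\pi\}$. I would then invoke the block description of $M_\tau$ underlying the proof of Theorem~\ref{t:shalika-sub} and Proposition~\ref{p:complicated-intertwining}: on each $\GL_2$-block $M_\tau$ is the rank-one standard intertwiner $\Ind(\mu|\cdot|^{-1/2}\times\mu|\cdot|^{1/2})\to\Ind(\mu|\cdot|^{1/2}\times\mu|\cdot|^{-1/2})$, whose image is the Steinberg $\mu\St$ and whose kernel is the one-dimensional quotient $\mu\circ\det$. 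Since $\theta$ is regular and the reducibility of $I(\theta)$ is concentrated in these $\GL_2$-blocks, the constituents of $I(\theta)$ are obtained by replacing, in each block, the Steinberg by the corresponding character $\mu\circ\det$, the generic $\pi$ being the all-Steinberg choice. Hence every constituent $\sigma$ of $\ker M_\tau$ is parabolically induced from a datum in which at least one $\GL_2$-block contributes a character $\mu\circ\det$ in place of a Steinberg.

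The hard part will be showing that no such $\sigma$ admits a Shalika functional; this gives $\Hom_S(\ker M_\tau,\psi)=0$ and then, via left-exactness of $\Hom_S(-,\psi)$ on $0\to\ker M_\tau\to I(\theta^{\tau^{-1}})\to\pi\to 0$ together with the multiplicity one theorem for Shalika models \cite[Prop.~6.1]{jacquet-rallis} applied to $\pi$, shows also that $\dim\Hom_S(I(\theta^{\tau^{-1}}),\psi)=1$ and that the unique Shalika functional factors through the quotient $\pi$. The $\GL_2$-character $\mu\circ\det$ carries no Shalika functional — the unipotent radical of the $\GL_2$-Shalika subgroup acts trivially on it, which is incompatible with the nontrivial additive character $\psi$ — and, combined with the hereditary behaviour of Shalika models under parabolic induction (Matringe's inductive construction \cite{matringe:shalika} and \cite[Prop.~6.1]{jacquet-rallis}), a $\mu\circ\det$-block obstructs a Shalika model for the induced $\sigma$. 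The genuinely delicate point is that Matringe's classification is formulated for \emph{generic} irreducible representations, whereas the $\sigma$ appearing in $\ker M_\tau$ are non-generic (and non-tempered once a block grows): for these one must supply a hereditary vanishing statement for Shalika functionals on the relevant induced modules, or, equivalently, run the argument block by block along the factorisation of $\tau$ into commuting transpositions $(j,n+j)$, reducing to the two elementary intertwiners of \S\ref{ss:shalika-at1} and the $\GL_2$ base case, where the vanishing can be read directly off the explicit integrals computed there.
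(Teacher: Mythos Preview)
Your reduction to $\Hom_S(\ker M_\tau,\psi)=0$ is sound, and you correctly identify the Jordan--H\"older constituents of $\ker M_\tau$ as the non-generic pieces obtained by replacing at least one Steinberg block by the corresponding character $\mu\circ\det$. The gap is exactly where you flag it: you need that each such $\sigma$ carries no Shalika functional, but Matringe's classification \cite{matringe:shalika} is stated only for \emph{generic} irreducibles, and neither \cite{jacquet-rallis} nor \cite{matringe:shalika} supplies the hereditary vanishing statement you invoke for induced representations with a character block. Your fallback---factor $\tau$ into commuting transpositions $(j,n+j)$ and iterate---does not work as written, because an individual $(j,n+j)$ applied to $\theta^{\tau^{-1}}$ does not preserve the Ash--Ginzburg ordering, so the local coefficient identity and the integrals of \S\ref{ss:shalika-at1} are not directly available along that factorization.

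The paper avoids the question of abstract Shalika models on non-generic constituents altogether. For each block it builds an auxiliary Weyl element $\mu_j=\nu_j^{-1}(n,2n)\nu_j$ (respectively $\mu_j'$), where $\nu_j$ conjugates the $j$-th block into positions $n,2n$, and considers $M_{\mu_j}:I(\theta^{\mu_j^{-1}\tau^{-1}})\to I(\theta^{\tau^{-1}})$. Because the only linked characters swapped by $\mu_j$ are the pair $\mu|\cdot|^{\pm1/2}$ on that block, $\Im M_{\mu_j}$ contains precisely those constituents with $\mu\circ\det$ on the $j$-th block; the span of all these images therefore equals $\ker M_\tau$. On the other hand, the Shalika local coefficient identity gives $\mathcal{S}_{\AG}^{\theta^{\tau^{-1}}}\circ M_{\mu_j}=\mathcal{C}_{\theta^{\tau^{-1}},\mu_j}\cdot\mathcal{S}_{\AG}^{\theta^{\mu_j^{-1}\tau^{-1}}}$, and by the factorization in Proposition~\ref{p:hard-case} the coefficient $\mathcal{C}_{\theta^{\tau^{-1}},\mu_j}$ contains the factor $c_{n,2n}(\theta^{\nu_j^{-1}\tau^{-1}})=c(|\cdot|^{-1})=0$ (or the analogous $c_{n,2n-1}$ factor in the $\eta_i^{\pm1}$ cases). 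Hence the \emph{specific} functional $\mathcal{S}_{\AG}^{\theta^{\tau^{-1}}}$ vanishes on each $\Im M_{\mu_j}$, and therefore on their span $\ker M_\tau$. The conjugation by $\nu_j$ is precisely the missing ingredient in your block-by-block sketch, and the argument never needs to know whether $\Hom_S(\sigma,\psi)=0$ abstractly.
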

\begin{proof}
By the Bernstein-Zelevinsky classification, since $\Im
M_\tau=\pi$, $(\ker M_\tau)^{\ss}=\bigoplus \sigma$, where
$\sigma$ runs over all components of $ I(\theta)$ different
from $\pi$. Our strategy is to realize $\ker M_\tau$ in
the linear span of a collection of auxiliary intertwining
operators.

Suppose $b\neq 0$ (otherwise this paragraph does not apply). For
each $1\leqslant i\leqslant b$, we will move the characters $\varepsilon_i|\cdot|^{\pm 1/2}$ to positions $n$ and $2n$, swap, and move back. Let
\[\nu_i = (2n,2n-1,\ldots,n+a+i)(n,n-1,\ldots, a+i)\]
and $\mu_i = \nu_i^{-1}(n,2n)\nu_i$. The semisimplification of
the image of $M_{\mu_i}: I(\theta^{\mu_i^{-1}\tau^{-1}})\to
I(\theta^{\tau^{-1}})$ will contain all components which
are $\varepsilon_i\circ\det$ on the Levi block $(a+i,n+a+i)$. Indeed,
since $\theta$ is regular, $M_{\nu_i}$ is an isomorphism, and
the image of $M_{(n,2n)}$ will give the character on the Levi
block.

By Proposition \ref{p:complicated-intertwining}, we see that
\[\mathcal{S}_{\AG}^{\theta^{\tau^{-1}}}\circ
M_{\mu_i}=\mathcal{C}_{\theta^{\tau^{-1}},\mu_i}\mathcal{S}_{\AG}^{\theta^{\mu_i^{-1}\tau^{-1}}},\]
where
$\mathcal{C}_{\theta^{\tau^{-1}},\mu_i}$ contains as factor
\[c_{n,2n}(\theta^{\nu_i^{-1}\tau^{-1}})=c\left(\frac{\varepsilon_i|\cdot|^{-1/2}}{\varepsilon_i|\cdot|^{1/2}}\right)=0.\]
We deduce that $\mathcal{S}_{\AG}^{\theta^{\tau^{-1}}}$ vanishes on $\Im M_{\mu_i}$.

Suppose, now, that $1\leqslant i\leqslant c$. In this case, we need two
auxiliary intertwining operators. Consider the permutations
$\nu_i=(2n,2n-1,\ldots, n+a+b+2i)(n,n-1,\ldots,a+b+2i)$,
$\mu_i=\nu_i^{-1}(n,2n)\nu_i$, and
$\mu'_i=\nu_i^{-1}(n,2n)\nu_i(a+b+2i-1,a+b+2i)(n+a+b+2i-1,n+a+b+2i)$.

First, the semisimplification of the image of
$M_{\mu_i}: I(\theta^{\mu_i^{-1}\tau^{-1}})\to
I(\theta^{\tau^{-1}})$ contains all components which are
$\eta\circ\det$ on the Levi block $(a+b+2i,n+a+b+2i-1)$, as the
only linked characters that swap order are $\eta|\cdot|^{\pm
  1/2}$. Again, we see that
$\mathcal{S}_{\AG}^{\theta^{\tau^{-1}}}\circ
M_{\mu_i}=\mathcal{C}_{\theta^{\tau^{-1}},\mu_i}\mathcal{S}_{\AG}^{\theta^{\mu_i^{-1}\tau^{-1}}}$,
where $\mathcal{C}_{\theta^{\tau^{-1}},\mu_i}$ contains as factor
$c_{n,2n-1}(\theta^{\nu_i^{-1}\tau^{-1}})=c\left(\frac{\eta_i|\cdot|^{-1/2}}{\eta_i|\cdot|^{1/2}}\right)=0$,
so $\mathcal{S}_{\AG}^{\theta^{\tau^{-1}}}$ vanishes on $\Im M_{\mu_i}$.

Finally, the semisimplification of the image of
$M_{\mu'_i}: I(\theta^{(\mu'_i)^{-1}\tau^{-1}})\to
I(\theta^{\tau^{-1}})$ contains all components which are
$\eta^{-1}\circ\det$ on the Levi block $(a+b+2i-1,n+a+b+2i)$, as the
only linked characters that swap order are $\eta^{-1}|\cdot|^{\pm
  1/2}$. Again, we see that
$\mathcal{S}_{\AG}^{\theta^{\tau^{-1}}}\circ
M_{\mu'_i}=\mathcal{C}_{\theta^{\tau^{-1}},\mu'_i}\mathcal{S}_{\AG}^{\theta^{(\mu'_i)^{-1}\tau^{-1}}}$,
where
$\mathcal{C}_{\theta^{\tau^{-1}},\mu'_i}$ contains as factor
$c_{n,2n-1}(\theta^{(a+b+2i-1,a+b+2i)(n+a+b+2i-1,n+a+b+2i)\nu_i^{-1}\tau^{-1}})=c(\frac{\eta_i^{-1}|\cdot|^{-1/2}}{\eta_i^{-1}|\cdot|^{1/2}})=0$,
so $\mathcal{S}_{\AG}^{\theta^{\tau^{-1}}}$ vanishes on $\Im
M_{\mu'_i}$.

We conclude that $\mathcal{S}_{\AG}^{\theta^{\tau^{-1}}}$ vanishes on the linear span of all components of $I(\theta^{\tau^{-1}})$ other than $\pi$, and thus also on $\ker M_\tau$, as desired.
\end{proof}

\begin{definition}
For $f\in \pi$ let $g\in I(\theta^{\tau^{-1}})$ such that
$f=M_\tau g$, and let $g_s$ any analytic family such that
$g_0=g$. Define
\[\mathcal{S}_\pi(f)=\lim_{s\to 0}\mathcal{S}_{\AG}^{\theta^{\tau^{-1}}}(g_s).\]
\end{definition}
This will satisfy the requirements of Theorem
\ref{t:shalika-sub}. First, $\mathcal{S}_\pi(f)$ is
well-defined. Indeed, if $g'$ is another such preimage and
$g'_s$ is an analytic family through $g'$ then
\[\lim_{s\to
  0}\mathcal{S}_{\AG}^{\theta^{\tau^{-1}}}(g_s)=\mathcal{S}_{\AG}^{\theta^{\tau^{-1}}}(g)=\mathcal{S}_{\AG}^{\theta^{\tau^{-1}}}(g')=\lim_{s\to
  0}\mathcal{S}_{\AG}^{\theta^{\tau^{-1}}}(g'_s),\]
as $g-g'\in \ker M_{\tau}\subset \ker \mathcal{S}_{\AG}^{\theta^{\tau^{-1}}}$ by Proposition \ref{p:kerT}. Second, $\mathcal{S}_\pi$ is a Shalika functional, being defined by the analytic continuation of Shalika functionals. Finally, 
if $f=M_\tau f_{Bw_0J}^{\theta^{\tau^{-1}}}$, then
$g_s=f_{Bw_0J}^{\theta_s^{\tau^{-1}}}$ is a flat family,
and
\begin{align*}
\mathcal{S}_\pi(f)(t^{-\delta}\varpi^\delta )&=\lim_{s\to 0}\mathcal{S}_{\AG}^{\theta_s^{\tau^{-1}}}(f_{Bw_0J}^{\theta^{\tau^{-1}}})(t^{-\delta}\varpi^\delta )=1.
\end{align*}

\section{Parahoric level $p$-adic $L$-functions and $p$-adic families}\label{s3}

We first  recall some automorphic results from \cite{DJR}, then we present the Friedberg--Jacquet style linear functional on the overconvergent coholmology constructed in \cite{BDW} while setting up global notations.

Let $F$ be a totally real number field of degree $d$, let $\cO_F$ be its ring of integers and $\Sigma$ the set of its real embeddings. Let $\A = \A_f\times \R$ denote the ring of adeles of $\Q$. For  $v$  a non-archimedean place of $F$, we let $F_v$ be the completion of $F$ at $v$, denote by $\cO_v$ the ring of integers in $F_v$, and fix a uniformiser $\varpi_v$. 

Let $n \geqslant 1$ and let $G$ denote the algebraic group $\mathrm{Res}_{\cO_F/\Z}\mathrm{GL}_{2n}$, $B= \mathrm{Res}_{\cO_F/\Z}B_{2n}$ for the Borel subgroup of upper triangular matrices, $N$ its the unipotent radical, and $T= \mathrm{Res}_{\cO_F/\Z}T_{2n}$ be the maximal split torus of diagonal matrices.  
Let $H=\mathrm{Res}_{\cO_F/\Z}(\GL_n \times \GL_n)$ which we diagonally embed into   $G$. 
Consider the  Siegel parabolic subgroup $Q$ of $G$  whose Levi subgroup is $H$, and let $U$ be its  unipotent radical.

Fix a rational prime $p$ and an embedding $\iota_p : \overline{\Q} \hookrightarrow \overline{\Q}_p$. It determines partition of $\Sigma$ into subsets 
$\Sigma_\mathfrak{p}$ indexed by the set  $\Sigma_p$ of primes $\mathfrak{p}$ of $F$ above $p$.

\subsection{The automorphic setup} \label{sec:setup} 
Let $\pi$ be a cuspidal automorphic representations of $\GL_{2n}(\A_F)$ having central character $\omega$.
We assume that $\pi$  is regular algebraic, meaning that it is cohomological of weight
$\lambda= (\lambda_{\sigma})_{\sigma \in \Sigma}$ which is dominant, integral and pure 
{\it i.e.} there exists $\sw  \in \Z$, the purity weight, such that  for all $\sigma \in \Sigma$,  $\lambda_{\sigma, 1}\geqslant  \dots \geqslant \lambda_{\sigma, 2n}$ are in $\Z$ and  $\lambda_{\sigma, i}+ \lambda_{\sigma, 2n- i+ 1}= \sw$   for all  $1 \leqslant  i \leqslant  2n$. 

We further assume that $\pi$ is essentially self-dual of symplectic type with respect to a Hecke character $\eta$ or, equivalently by Asgari--Shahidi~\cite{AS06}, that it is a functorial transfer of a globally generic representation of $\mathrm{GSpin}_{2n+1}$ having central character $\eta$, 
where we recall that $\eta^n=\omega$ and that $\eta |\cdot |^{\sw}$ has finite order. 
 Of utmost importance to us will be a third equivalent characterization, namely that $\pi$ admits a Shalika model with respect to $\eta$. 
 We call such a $\pi$ a RASCAR.  We also recall that this implies that for each place $v$ of $F$,  $\pi_v$ admits a Shalika model with respect to $\eta_v$. 

In this paper, we will assume that $\pi$ is parahoric spherical,  {\it i.e.}  $\pi_v^{J_v}\ne \{0\} $ for each finite place $v$ of $F$, where the parahoric subgroup $J_v$ of $\GL_{2n}(\cO_v)$ consists of matrices whose reduction modulo $\varpi_v$ belongs to $Q$. This implies that $\eta_v$ is unramified, hence there exists an unramified character $\xi_v$ such that $\xi_v^2=\eta_v^{-1}$ and the results from \S\ref{s1}-\S\ref{s:intertwining} apply to  $\pi_v\otimes \xi_v$  which admits  a Shalika model with respect to the trivial character.  

As recalled in \S\ref{sec:local},  $\pi_v$ contributes to the (unitarily normalized)  parabolic induction $\Ind_{B(F_v)}^{G(F_v)}\theta_v$ of an unramified character $\theta_v$ and we let $\alpha_{v,i}=\theta_{v,i}(\varpi_v)$ denote the Satake parameters. As $\pi_v\simeq\pi_v^\vee\otimes \eta_v$, the Satake parameters 
can be paired up with product $\eta(\varpi_v)$ in each couple.  

Let $S=\{v\nmid p\infty: \pi_v\text{ not spherical}\}$  be the set of bad places for $\pi$. 
Both  $p$-adic deformations and  constructions of $p$-adic $L$-functions  require a preliminary step consisting in the choice of  $p$-adic $Q$-refinement $\widetilde{\pi}=\left(\pi,(\alpha_v)_{v\in S\cup S_p}\right)$ of $\pi$, where $\alpha_v$ is a $U_v$-eigenvalue on $\pi_v^{J_v}$, and it  is essential to require the refinement $\widetilde{\pi}_v=(\pi_v,\alpha_v)$ to be spin (see \cite[\S6]{BDGJW}). By \cite{DJR} one has 
$\alpha_v=q_v^{n^2/2}\prod\limits_{i\in I}\alpha_{v,i}$, where $q_v=\mathrm{N}_{F/\Q}(v)$ and 
$I$ is a subset of  $\{1,\dots, 2n\}$  of cardinality $n$, and being spin amounts to choosing in $I$ exactly one 
Satake parameter per couple with product $\eta(\varpi_v)$. 

We will always assume the refinement $\widetilde{\pi}_v=(\pi_v,\alpha_v)$  to be regular, {\it i.e.} that  the generalized eigenspace $\pi_v^{J_v} \lsem U_v-\alpha_v\rsem$ is a line. As the refinement is spin this already implies that $\alpha_{v,i}\ne \alpha_{v,j}$ for all $i\in I, j\notin I$, but for simplicity we will 
further assume $\theta_v$ to be regular,  {\it i.e.} the $\alpha_{v,i}$ to be pairwise distinct. 

The explicit Shalika functionals for unramified principal series  constructed by Ash--Ginzburg \cite{AG}  play a central role in our approach. 
They specifically require the induced character to be ordered so that: 
\[\theta_{v,i}\theta_{v,n+i}=\eta \text{ for all }  1 \leqslant  i \leqslant n.  \]
Henceforth we will consider our parahoric spherical $\pi_v$ as a subrepresentation of $\Ind_{B(F_v)}^{G(F_v)}\theta_v$ with 
$\alpha_{v,i}\cdot \alpha_{v,n+i}=\eta(\varpi_v)$ and $\alpha_{v}=q_v^{n^2/2}\prod\limits_{n+1}^{2n}\alpha_{v,i}$.

\subsection{Automorphic $p$-adic $L$-functions}  \label{sec:padicL}




Let $\widetilde\pi^S = (\pi, (\alpha_v)_{v\in S\cup S_p})$ be a  regularly $Q$-refined RACAR
of weight $\lambda$ as in \S\ref{sec:setup}.  Assume further that $\pi$ admits a  $(\eta,\psi)$-Shalika model:
\begin{equation}\label{eq:shalika integral}
\mathcal{S}_{\psi}^\eta: \pi\hookrightarrow \Ind_{S(\A)}^{G(\A)}(\eta\otimes\psi), \qquad \varphi \mapsto 	\mathcal{S}_{\psi}^\eta(\varphi): g \mapsto \int_{Z_G(\A)\mathcal{S}(\Q)\backslash\mathcal{S}(\A)} \varphi(sg) \ (\eta \otimes \psi)^{-1}(s)ds.
\end{equation}

Consider the open compact subgroup $K(\widetilde\pi^S)=\prod_v K_v$ of  $G(\A_f)$ where
$K_v=\GL_{2n}(\cO_v)$  is  the maximal hyperspecial subgroup for $v\notin S\cup S_p$ and 
$K_v=J_v$  is  the standard $Q$-parahoric subgroup for $v\in S\cup S_p$.  
Let $\cH^S$ denote the spherical Hecke algebra away from $S\cup S_p$ and let $\fm_\pi$ be its maximal ideal attached to $\pi$
(see \cite[Def.~2.1]{BDW}, where this commutative algebra is denoted by  $\cH'$). 
Consider  the following maximal ideal of $\cH^S[U_v - \alpha_v, v\in S\cup S_p]$: 
\[\fm_{\widetilde\pi}^S=(\fm_\pi, U_v - \alpha_v, v\in S\cup S_p).\]

Given a character $\epsilon$ of  $\{\pm1\}^\Sigma$, we will now endow  the line 
$\pi_f^{K(\widetilde\pi)}[ U_v - \alpha_v, v\in S\cup S_p]$ with two rational structures 
 allowing us to define Betti--Shalika periods measuring the ratio between the two.

The locally symmetric space $S_K = G(\Q)\backslash G(\A)/KK_{\infty}^{\circ}$  is a  $t=d(2n-1)(n+1)$-dimensional real orbifold. 
By \cite[Prop.~2.3]{BDW}, there is a Hecke-equivariant isomorphism
	\begin{equation}\label{eq:betti-shalika}
		\pi_f^{K(\widetilde\pi^S)}[ U_v - \alpha_v, v\in S\cup S_p] \xrightarrow{\sim} \mathrm{H}_c^t(S_{K(\widetilde\pi^S)}, \mathscr{V}_{\lambda}^\vee(\overline{\Q}_p))_{\fm_{\widetilde\pi}^S}^\epsilon.
	\end{equation}
depending on the choice of basis of the relative Lie algebra cohomology  and  $\iota_p$. The right hand side is a line admitting a  basis $\phi_{\widetilde\pi^S}^\epsilon$   defined over the field of rationality of $\widetilde\pi^S$ (that is, the field of rationality of $\pi$ to which we adjoin the $\alpha_v$'s) and which can be further assumed to be $p$-integral.

We will now use the Shalika model $\mathcal{S}_{\psi_f}^{\eta_f}$ of $\pi_f$ to fix an appropriate  basis of the left hand side. 

For $v\notin S\cup S_p\cup \Sigma$, Friedberg--Jacquet \cite{friedberg-jacquet} showed the existence of a  spherical vector $W^{\mathrm{FJ}}_v \in \mathcal{S}_{\psi_v}^{\eta_v}(\pi_v)$  such that $W^{\mathrm{FJ}}_v(t_v^{-\delta_v})=1$. Moreover they show that 
for all unramified quasi-characters $\chi_v : F_v^\times \to \C^\times$, we have
\begin{equation}\label{eq:jacquet-friedberg test vector}
	\zeta_v\left(s+\tfrac{1}{2}, W^{\mathrm{FJ}}_v, \chi_v\right) = [q_v^s\chi_v(\varpi_v)]^{n\delta_v}\cdot L\left(\pi_v \otimes \chi_v,s+\tfrac{1}{2}\right). 
\end{equation}

For $v\in S\cup S_p$, by Theorem~\ref{t:shalika-sub} there exists $W_{\widetilde\pi_v}\in \mathcal{S}_{\psi_v}^{\eta_v}(\pi_v^{J_v})[ U_v-\alpha_v]$ such that  $W_{\widetilde\pi_v}(t_v^{-\delta_v})=1$. Moreover by 	 Proposition~\ref{p:zeta-local-constants} for all unramified quasi-characters $\chi_v : F_v^\times \to \C^\times$,
 we have
 \begin{align*}
\zeta(s+\tfrac{1}{2},W_{\widetilde\pi_v},\chi_v)&=q_v^{\delta_v \left[(s+\frac{1}{2})n-\frac{n^2}{2}\right]}\left(1-\frac{1}{q_v}\right)^n\chi_v(\varpi_v)^{-\delta_v n}L(\theta'_{n+i,v}\chi_v,s+\tfrac{1}{2}).
\end{align*}

A straightforward consequence of this  is the existence of an explicit  Friedberg--Jacquet test vector
\[W_{\pi_v}=\sum\limits_{\alpha_v} \kappa_{\alpha_v} W_{\widetilde\pi_v}.\]
for $\pi_v$ parahoric spherical, where the sum is over all possible refinements $\widetilde\pi_v=(\pi_v,\alpha_v)$ of $\pi_v$ (assumed all regular) and 
the rational constants $\kappa_{\alpha_v}$ can be explicitly computed using partial fractions decompositions.

Thus  $\left(\mathcal{S}_{\psi_f}^{\eta_f}\right)^{-1}\left(\bigotimes_{v\notin S\cup S_p\cup \Sigma} W^{\mathrm{FJ}}_v 
\bigotimes_{v\in  S} W_{\pi_v} \bigotimes_{v\in  S_p} W_{\widetilde\pi_v}\right)$ is a   basis of the left hand side of \eqref{eq:betti-shalika} and we define the Betti--Shalika period
$\Omega_{\widetilde\pi}^\epsilon$ as its coordinate in the basis $\phi_{\widetilde\pi}^\epsilon=
\sum\limits_{\alpha_S} \kappa_{\alpha_S} \phi_{\widetilde\pi^S}^\epsilon$ that we have  chosen earlier.

 Assuming  that $\widetilde\pi$ is non-$Q$-critical in the sense of \cite[Def.~3.14]{BDW}, {\it i.e.},  that the natural map on   generalized eigenspaces 
\[	\mathrm{H}_c^{\bullet}(S_{K(\widetilde\pi^S)}, \mathscr{D}_\lambda(L))_{\fm_{\widetilde\pi}^S}  
	\xrightarrow{\sim}  \mathrm{H}_c^{\bullet}(S_{K(\widetilde\pi^S)},\mathscr{V}_\lambda^\vee(L))_{\fm_{\widetilde\pi}^S}
\]
	is an isomorphism, allows us  for each $\epsilon \in \{\pm1\}^\Sigma$ to uniquely lift  $\phi_{\widetilde\pi^S}^\epsilon$  to an $U_p$-eigenclass 
	$\Phi_{\widetilde\pi^S}^\epsilon \in \mathrm{H}_c^t(S_{K(\widetilde\pi^S)},\mathscr{D}_\lambda)^\epsilon_{\fm_{\widetilde\pi}}$. 
	Recall that the $U_p$-eigenvalue  on $\Phi_{\widetilde\pi^S}^\epsilon$ equals $\alpha_p^{\circ} =  \lambda(t_p)\alpha_p$. Let $\Phi_{\widetilde\pi}^\epsilon = \sum\limits_{\alpha_S} \kappa_{\alpha_S} \Phi_{\widetilde\pi^S}^\epsilon$.

	Let $\cL_p(\tilde\pi) = A^{-1}\cdot  \mu^{\eta_0}(\Phi_{\tilde\pi})$ be the $L$-valued distribution on $\Gal_p$ attached to  $\Phi_{\tilde\pi} = \sum\limits_{\epsilon \in \{\pm 1\}^\Sigma} \Phi_{\tilde\pi}^\epsilon$ constructed in \cite[Def.~6.17]{BDW}, where $A$ is the global constant from \cite[Def.~6.23]{BDW}.

\begin{theorem} \label{thm:non-ordinary}
The distribution $\cL_p({\tilde{\pi}})$ is admissible of growth $h_p = v_p(\alpha_p^{\circ})$. For every finite order Hecke character $\chi$ of $F$ of conductor $p^{\beta}$, and all $j \in \mathrm{Crit}(\lambda)$, we have
	\begin{align}\label{eq:interpolation}
		\iota_p^{-1}(\cL_p(\tilde\pi, \chi\chi_{\cyc}^j)) =  \mathcal{G}(\chi_f)^n  \mathrm{N}_{F/\Q}(-i\mathfrak{d})^{jn}  \prod_{\fp\in \Sigma_p} e_{\fp}(\tilde\pi_{\fp},\chi_{\fp},j)
\cdot \frac{L^{(p)}\big(\pi\otimes\chi, j+\tfrac{1}{2}\big)}{\Omega_{\tilde\pi}^{\epsilon}},
	\end{align}
	where  $\epsilon = (\chi\chi_{\cyc}^j\eta)_\infty$,  $\mathcal{G}(\chi_f)$ is the Gauss sum and  
			\[
	e_{\fp}(\tilde\pi_{\fp},\chi_{\fp},j) = \left\{\begin{array}{cl}\left(q_{\fp}^{nj + \binom{n}{2}}\alpha_{\fp}^{-1}\right)^{\beta_{\fp}} &: \chi_{\fp} \text{ ramified},\\
			\prod\limits_{i=n+1}^{2n}
			\frac{1-(\theta'_{\fp,i}\chi_{\fp})^{-1}(\varpi_{\fp})q_{\fp}^{j-1/2}}{1-\theta'_{\fp,i}\chi_{\fp}(\varpi_{\fp})q_{\fp}^{-j-1/2}}. &: \chi_{\fp} \text{ unramified.}\end{array}\right.
		\]
\end{theorem}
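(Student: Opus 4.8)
The plan is to feed the explicit local data of \S\ref{s1}--\S\ref{s:intertwining} into the global machinery of \cite{BDW}. For the admissibility statement, recall that $\cL_p(\tilde\pi)=A^{-1}\mu^{\eta_0}(\Phi_{\tilde\pi})$, where $\Phi_{\tilde\pi}=\sum_\epsilon\Phi_{\tilde\pi}^\epsilon$ and $\Phi_{\tilde\pi}^\epsilon=\sum_{\alpha_S}\kappa_{\alpha_S}\Phi_{\tilde\pi^S}^\epsilon$, each $\Phi_{\tilde\pi^S}^\epsilon\in\mathrm{H}_c^t(S_{K(\widetilde\pi^S)},\mathscr{D}_\lambda)_{\fm_{\widetilde\pi}}$ being the non-$Q$-critical lift of the classical eigenclass $\phi_{\tilde\pi^S}^\epsilon$; it is a $U_p$-eigenclass of eigenvalue $\alpha_p^\circ=\lambda(t_p)\alpha_p$. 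The criterion that a distribution produced by the evaluation map from a $U_p$-eigenclass in overconvergent cohomology has growth bounded by $v_p$ of that eigenvalue is established in \cite[\S6]{BDW}; since passing to the parahoric tame level $S$ changes neither the interpolation module $\mathscr{D}_\lambda$ nor the $U_p$-action, that argument carries over verbatim and yields $h_p=v_p(\alpha_p^\circ)$.

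For the interpolation formula I would begin from the expression of $\iota_p^{-1}(\cL_p(\tilde\pi,\chi\chi_{\cyc}^j))$ in \cite[\S6]{BDW} as an archimedean period ratio times a product of local zeta integrals $\zeta_v(j+\tfrac12,W_v,\chi_v)$ over the finite places, evaluated on the local test vectors trivialising \eqref{eq:betti-shalika}. At a hyperspecial place $v\notin S\cup S_p$ one uses $W^{\mathrm{FJ}}_v$ and \eqref{eq:jacquet-friedberg test vector}, recovering $L_v(\pi_v\otimes\chi_v,j+\tfrac12)$ up to $[q_v^{j}\chi_v(\varpi_v)]^{n\delta_v}$; at a bad tame place $v\in S$ one uses the explicit Friedberg--Jacquet test vector $W_{\pi_v}=\sum_{\alpha_v}\kappa_{\alpha_v}W_{\widetilde\pi_v}$ supplied by Theorem~\ref{t:shalika-sub} and Proposition~\ref{p:zeta-local-constants}, which again recovers the full $L_v(\pi_v\otimes\chi_v,j+\tfrac12)$; and at $v=\fp\in S_p$ one uses $W_{\widetilde\pi_\fp}$, or $u^{-1}t^{\beta_\fp}\cdot W_{\widetilde\pi_\fp}$ when $\chi_\fp$ is ramified, so that the two formulas of Proposition~\ref{p:zeta-local-constants} give precisely $e_\fp(\widetilde\pi_\fp,\chi_\fp,j)$: the ramified case is the local constant $(q_\fp^{nj+\binom n2}\alpha_\fp^{-1})^{\beta_\fp}$ of \cite[Prop.~9.3]{BDGJW}, while the unramified case is $\prod_{i=n+1}^{2n}L(\theta'_{\fp,i}\chi_\fp,j+\tfrac12)\big/L((\theta'_{\fp,i}\chi_\fp)^{-1},\tfrac12-j)$ rewritten in the displayed form. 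Multiplying the three ranges together, the factors away from $p$ assemble into $L^{(p)}(\pi\otimes\chi,j+\tfrac12)$ and the factors at $p$ into $\prod_{\fp\in\Sigma_p}e_\fp$.

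It then remains to collect the normalisation constants: the powers of $q_v$ and the values $\chi_v(\varpi_v)^{n\delta_v}$ for $v\nmid p$ combine, through the product formula for the different $\mathfrak{d}$, into $\mathrm{N}_{F/\Q}(-i\mathfrak{d})^{jn}$; the Gauss sums of $\chi_\fp$ at the places above $p$ where $\chi$ ramifies produce $\mathcal{G}(\chi_f)^n$; and the sign $\epsilon=(\chi\chi_{\cyc}^j\eta)_\infty$ picks out the $\epsilon$-isotypic summand of $\Phi_{\tilde\pi}$, so that the surviving archimedean period ratio is exactly $1/\Omega_{\tilde\pi}^\epsilon$, by the very definition of the Betti--Shalika period as the coordinate of $(\mathcal{S}_{\psi_f}^{\eta_f})^{-1}\big(\bigotimes_v W_v\big)$ in the rational basis $\phi_{\tilde\pi}^\epsilon=\sum_{\alpha_S}\kappa_{\alpha_S}\phi_{\tilde\pi^S}^\epsilon$; one checks along the way that the sums $\sum_{\alpha_S}\kappa_{\alpha_S}$ on the cohomology side and on the test-vector side match under \eqref{eq:betti-shalika}, which is what reconciles the $S$-refined objects with the $S$-unrefined $L^{(p)}$.

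The main obstacle is not conceptual, since all the substance sits in Theorem~\ref{t:shalika-sub} and Proposition~\ref{p:zeta-local-constants} (proved in \S\ref{s:intertwining}) and in the formalism of \cite{BDW}: the real work is bookkeeping. One must keep careful track of the local different exponents $\delta_v$, the powers of $q_v$, the distinction between the $U_v$- and $U_p$-eigenvalue normalisations ($\alpha_p^\circ=\lambda(t_p)\alpha_p$), the auxiliary unramified twist by $\xi_v$ with $\xi_v^2=\eta_v^{-1}$ needed to pass from the $\eta_v$-Shalika model to the trivial-character setting of \S\ref{s:intertwining}, and the partial-fraction coefficients $\kappa_{\alpha_v}$, and verify that all of these conspire to produce exactly the clean formula stated.
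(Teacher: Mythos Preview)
Your proof sketch is correct and follows exactly the approach the paper intends: the paper does not write out a separate proof of Theorem~\ref{thm:non-ordinary}, treating it instead as an immediate consequence of the construction in \S\ref{sec:padicL} once the local test vectors of Theorem~\ref{t:shalika-sub} and the zeta integral computations of Proposition~\ref{p:zeta-local-constants} are fed into the global machinery of \cite[\S6]{BDW}. Your identification of the three ranges of places, the role of the $\kappa_{\alpha_v}$ in reconciling the $S$-refined cohomology with the full Euler factor at bad tame places, and the bookkeeping of different exponents, Gauss sums, and the $\xi_v$-twist are all on the mark.
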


\subsection{The eigenvariety and families of $p$-adic $L$-functions} 
Here we follow closely \cite[\S8.3]{BDW}. 
Let $\pi$ be a RASCAR of weight $\lambda$  which is parahoric spherical at every finite place and recall that $S$ denotes the set of bad places outside $p$. 
Let $\widetilde\pi^S$ be a (regular) non-$Q$-critically refinement as in \S\ref{sec:padicL}.  
Let $\Omega$ be a sufficiently small affinoid neigbourhood of $\lambda$ in the parahoric weight space $\mathcal{W}^Q$ such that 
the overconvergent cohomology $\mathrm{H}_c^t(S_{(\widetilde\pi^S)},\mathcal{D}_\Omega)$ admit slope decomposition with respect to the compact operator 
$U_p$. It is endowed with an $\mathcal{O}(\Omega)$-linear  action of the Hecke algebra $\cH^S[U_v - \alpha_v, v\in S\cup S_p]$ yielding for each $h\in \Q_{\geqslant 0}$ a local piece $\mathcal{E}^S_{\Omega,h}$ of an eigenvariety $\mathcal{E}^S$. While this eigenvariety slightly differs from the one considered in 
{\it loc. cit. } in that, locally at $v\in S$ we have put the Hecke operator $U_v$  instead of diamond like operators $S_v$, all the constructions and arguments transfer {\it mutatis mutandis } yielding the following theorem. Indeed, the cyclicity  follows from the fact that we use top degree cohomology, while the \'etaleness follows from the local multiplicity for the family  ensured by \eqref{eq:betti-shalika} being a line. The proof itself is a subtle interplay between the closeness of the condition of being Spin and the openness of the condition of being Shalika --  a major theme in  {\it loc. cit. } to which we refer to more details. 

\begin{theorem} \label{t:eigenvariety}
Suppose that $\lambda$ is regular and that $\tilde\pi^S$ is a regular,  strongly non-$Q$-critical refinement in the sense of \cite[Def.~3.14]{BDW}. 
	 Then, after possibly shrinking $\Omega$, 
	the weight map $\mathsf{w}:\mathcal{E}^S\to \mathcal{W}^Q$ is \'etale at the point 	$x_{\tilde\pi^S}$ attached to $\tilde\pi^S$, i.e.,  
there is a 	 connected component $\mathcal{C}$  of $\mathcal{E}^S$ through $x_{\tilde\pi^S}$ such  that $\mathsf{w}:\mathcal{C}\xrightarrow{\sim} \Omega$. 
	
Moreover, for each $\epsilon \in \{\pm1\}^\Sigma$, there exists a Hecke eigenclass $\Phi_{\mathcal{C}}^\epsilon \in 
\mathrm{H}_c^t(S_{K(\widetilde\pi^S)},\mathcal{D}_\Omega)^{\epsilon}$ such that for a  Zariski dense set $\mathcal{C}^{\mathrm{cl}}$  of $x\in \mathcal{C}$  corresponding to non-$Q$-critical refined parahoric spherical RASCARs $\tilde\pi_x^S$,  the specialization of $\Phi_{\mathcal{C}}^\epsilon$ at $\mathsf{w}(x)$ generates 
 $\mathrm{H}_c^t(S_{K(\widetilde\pi^S)},\mathcal{D}_{\mathsf{w}(x)})^{\epsilon}$. 
 \end{theorem}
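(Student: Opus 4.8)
The plan is to run the eigenvariety machinery of \cite[\S8.3]{BDW} essentially verbatim, isolating the few spots where the local results of \S\S\ref{s1}--\ref{s:intertwining} are genuinely needed to accommodate the operator $U_v$ (in place of the diamond-type $S_v$ of \emph{loc. cit.}) at the auxiliary places $v\in S$. First I would shrink $\Omega$ so that $\mathrm{H}_c^t(S_{K(\widetilde\pi^S)},\mathcal{D}_\Omega)$ admits a slope-$\leqslant h$ decomposition for $U_p$, with $h = h_p = v_p(\alpha_p^{\circ})$, and work with the finite $\mathcal{O}(\Omega)$-module $M^\epsilon := \mathrm{H}_c^t(S_{K(\widetilde\pi^S)},\mathcal{D}_\Omega)^{\epsilon,\leqslant h}_{\fm_{\widetilde\pi}^S}$, which carries the $\mathcal{O}(\Omega)$-linear action of $\cH^S[U_v-\alpha_v,\ v\in S\cup S_p]$ whose support cuts out $\mathcal{E}^S$ near $x_{\widetilde\pi^S}$. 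The whole theorem then reduces to showing that, after possibly shrinking $\Omega$ again, each $M^\epsilon$ is free of rank one over $\mathcal{O}(\Omega)$: indeed the Hecke action then factors through the structure map, $\mathcal{C}:=\mathrm{Supp}(M^\epsilon)$ maps isomorphically onto $\Omega$ (so $\mathsf{w}$ is étale --- in fact an isomorphism on a connected component --- at $x_{\widetilde\pi^S}$), and any $\mathcal{O}(\Omega)$-generator of $M^\epsilon$ supplies $\Phi_{\mathcal{C}}^\epsilon$.

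For the freeness I would argue in two steps. The first is cyclicity of $M^\epsilon$ as an $\mathcal{O}(\Omega)$-module: this is where one uses that $t=d(2n-1)(n+1)$ is the top degree of the Borel--Serre space and that $\fm_\pi$ is non-Eisenstein, so that after localization the degree-$t$ compactly supported overconvergent cohomology is a quotient of a standard cyclic module, exactly as in \cite{BDW}; substituting $U_v$ for $S_v$ at $v\in S$ changes nothing here, as it only replaces one finite Hecke operator by another while the localization still extracts the $\widetilde\pi$-part. Writing $M^\epsilon\cong\mathcal{O}(\Omega)/J_\epsilon$, it remains to see $J_\epsilon=0$ locally at $\lambda$. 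For this I would invoke strong non-$Q$-criticality: the base-change spectral sequence degenerates so that $M^\epsilon\otimes^{\mathbf{L}}_{\mathcal{O}(\Omega)}k(\lambda)$ is concentrated in degree zero and equals the classical eigenspace $\mathrm{H}_c^t(S_{K(\widetilde\pi^S)},\mathscr{V}_\lambda^\vee)^{\epsilon}_{\fm_{\widetilde\pi}^S}$, whence $\mathrm{Tor}_1^{\mathcal{O}(\Omega)}(M^\epsilon,k(\lambda))=0$; and by \eqref{eq:betti-shalika} together with Proposition~\ref{p:parahoric-rank2} and the regularity of the refinements $\widetilde\pi_v=(\pi_v,\alpha_v)$ for $v\in S\cup S_p$ --- which force $\pi_v^{J_v}\lsem U_v-\alpha_v\rsem$ to be a line at each such $v$ --- this eigenspace is one-dimensional. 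Hence $\dim_{k(\lambda)}M^\epsilon\otimes_{\mathcal{O}(\Omega)}k(\lambda)=1$, and since $\mathcal{O}(\Omega)_\lambda$ is regular local and $\mathrm{Tor}_1$ vanishes at its maximal ideal, the local criterion for flatness makes $M^\epsilon$ free of rank one over $\mathcal{O}(\Omega)_\lambda$, hence over $\mathcal{O}(\Omega)$ after shrinking.

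For the Zariski density of $\mathcal{C}^{\mathrm{cl}}$ I would note that classical weights are Zariski dense in $\mathcal{W}^Q$ and $\mathsf{w}|_{\mathcal{C}}$ is an isomorphism onto $\Omega$, so classical-weight points of $\mathcal{C}$ are Zariski dense; after shrinking $\Omega$ each such $x$ has slope $\leqslant h$ small relative to its weight, hence is non-$Q$-critical and classical, and --- since $x_{\widetilde\pi^S}$ corresponds to a parahoric spherical RASCAR, the condition of carrying a Shalika model is open on the classical locus, and the Spin condition is closed --- one may further assume $x$ corresponds to a non-$Q$-critical refined parahoric spherical RASCAR $\widetilde\pi_x^S$. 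Base change and non-criticality then identify the specialization of $\Phi_{\mathcal{C}}^\epsilon$ at $\mathsf{w}(x)$ with a generator of the rank-one module $\mathrm{H}_c^t(S_{K(\widetilde\pi^S)},\mathcal{D}_{\mathsf{w}(x)})^{\epsilon}$.

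The main obstacle --- everything else being a faithful adaptation of \cite{BDW} --- is the local multiplicity-one input behind ``rank one'': one must know that $\pi_v^{J_v}\lsem U_v-\alpha_v\rsem$ is a line and deforms well in families at the parahoric places $v\in S$ where the operator $U_v$ has been inserted. This is exactly what \S\ref{s:intertwining} provides, via Theorem~\ref{t:shalika-sub} (an explicit Shalika functional on $\pi_v^{J_v}\lsem U_v-\alpha_v\rsem$) and Proposition~\ref{p:zeta-local-constants} (the matching local zeta computation); granting these, the subtle interplay between the closedness of the Spin condition and the openness of the Shalika condition that carries the family over $\Omega$ is imported from \emph{loc. cit.} without change, and I expect this bookkeeping --- rather than any new global argument --- to be the crux.
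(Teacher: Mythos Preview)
Your proposal is correct and follows essentially the same route as the paper: the argument is borrowed from \cite[\S8.3]{BDW}, with cyclicity coming from top-degree cohomology, rank one over $\mathcal{O}(\Omega)$ from the one-dimensionality of \eqref{eq:betti-shalika} (i.e.\ from regularity of the refinement at $v\in S\cup S_p$), and the structure of $\mathcal{C}^{\mathrm{cl}}$ from the Spin-closed/Shalika-open interplay. One small misattribution: the fact that $\pi_v^{J_v}\lsem U_v-\alpha_v\rsem$ is a line is the \emph{regularity hypothesis} on the refinement, not a consequence of Theorem~\ref{t:shalika-sub}; the latter supplies a distinguished nonzero vector in that line (with controlled Shalika value), which feeds into the $p$-adic $L$-function rather than into the \'etaleness argument.
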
 

 The above theorem implies existance of $p$-adic periods $c_x^{\epsilon}\in L^\times$ such that the  specialization of $\Phi_{\mathcal{C}}^\epsilon$ at $\mathsf{w}(x)$ equals $c_x^{\epsilon}\cdot \Phi_{\tilde\pi_x^S}^\epsilon$. It also allows to define a $\mathcal{O}(\Omega)$-valued distribution 
 $\mathcal{L}_p^{\mathcal{C},\epsilon} = A^{-1}\cdot  \mu^{\eta_0}(\Phi_{\mathcal{C}}^\epsilon)$ on $\Gal_p$ via \cite[Def.~6.17]{BDW}, 
 whose specialization at any  $x\in  \mathcal{C}^{\mathrm{cl}}$ equals $c_x^{\epsilon}\cdot \cL^\epsilon_p(\tilde{\pi}_x^S)$. 
The crucial point to check is the existence of  $\Omega$  such that  $\tilde\pi_x^S$ is  regular  for all $x\in \mathcal{C}^{\mathrm{cl}}$ so that 
Theorem~\ref{thm:non-ordinary} requiring regularity of the induced character at all  $v\in S\cup S_p$ applies. 
This is ensured by the regularity of $\pi_v^S$ via the following Galois theoretic argument. There exists a $\mathcal{O}_\Omega$-valued pseudo-representation of $\mathrm{Gal}_F$ (or more precisely a Chenevier determinant) interpolating the Galois representations $\rho_x$ attached to  $x\in  \mathcal{C}^{\mathrm{cl}}$ (when $\rho_\pi$ is irreducible there is even a $\mathcal{O}_\Omega$-valued representation). 
By the Local-Global Compatibility, which is  known in our case, the Satake parameters of the inducing character of $\pi_{x,v}$ can be read off the Weil--Deligne representation attached to the restriction of $\rho_x$ at $v$. The continuity of the pseudo-representation and the 
 regularity of $\pi_v$ then ensures the claim. 

In closing, let us observe that while the $p$-adic $L$-function   $\cL^\epsilon_p(\tilde{\pi}^S)$ is not uniquely determined by its interpolation property 
  for  $\tilde\pi^S$ of critical slope (but still non-$Q$-critical), the $p$-adic $L$-function for the family  $\mathcal{L}_p^{\mathcal{C},\epsilon}$ is 
  always uniquely determined by  interpolation because of the density of the  non-critical slope classical points.

\end{document}